\definecolor{bluecite}{HTML}{0875b7}
\newtheorem{proposition}{Proposition}[section]
\newtheorem{theorem}{Theorem}[section]
\newtheorem{corollary}{Corollary}[section]
\newtheorem{remark}{Remark}[section]
\numberwithin{equation}{section}
\address{\textsc{Zolt\'an Balogh}: 
	Mathematisches Institut, University of Bern, Sidlerstrasse 12, 3012, Bern, Switzerland.}
\email{zoltan.balogh@unibe.ch}
\address{\textsc{Alexandru Krist\'aly}: 
Department of Economics, Babe\c s-Bolyai University, str. Teodor Mihali 58-60, 400591, Cluj-Napoca, Romania \& Corvinus Centre for Operations Research, Corvinus Institute for Advanced Studies, Corvinus University of Budapest,  F\H ov\'am t\'er 8, 1093, Budapest, Hungary. 
}
\email{
	 alexandru.kristaly@ubbcluj.ro}
\address{\textsc{Francesca Tripaldi}:
	Mathematisches Institut, University of Bern, Sidlerstrasse 12, 3012, Bern, Switzerland.}
\email{francesca.tripaldi@unibe.ch}
\subjclass[]{ 
	28A25, 
	26D15,
	46E35, 
	49Q22
}
\keywords{Log-Sobolev inequality, sharpness,  metric spaces, hypercontractivity, ${\sf CD}(0,N)$ condition.}
\thanks{Z. M. Balogh and F. Tripaldi were
	supported by the Swiss National Science Foundation, Grant Nr. {200020\_191978}.  Research of 
	A. Krist\'aly  was done while visiting the Corvinus Institute for Advanced Studies, Corvinus University, Budapest, Hungary;  research also supported by the UEFISCDI/CNCS grant PN-III-P4-ID-PCE2020-1001.
}
\title[Sharp log-Sobolev inequalities in ${\sf CD}(0,N)$ spaces]{Sharp log-Sobolev inequalities in ${\sf CD}(0,N)$ spaces  with applications}
\date{\today}
\author[Z.\ M.\ Balogh, A.\ Krist\'aly and F. Tripaldi]{Zolt\'an M. Balogh, Alexandru Krist\'aly and Francesca Tripaldi}
\begin{document}
		\vspace{-1cm}
		\begin{abstract}
		Given  $p,N>1,$ we prove the sharp $L^p$-log-Sobolev inequality on noncompact  metric measure spaces satisfying the ${\sf CD}(0,N)$ condition,    where the optimal constant involves the \textit{asymptotic volume ratio} of the space. This proof is based on a sharp isoperimetric inequality in ${\sf CD}(0,N)$ spaces, symmetrisation,  and a careful scaling argument.  As an application  we establish a sharp hypercontractivity estimate  for the Hopf--Lax semigroup in ${\sf CD}(0,N)$ spaces. 
		  The proof of this result uses  Hamilton--Jacobi inequality and  Sobolev regularity properties of the Hopf--Lax semigroup, which turn out to be essential in the present setting of nonsmooth and noncompact spaces. 
		  Moreover,  a sharp Gaussian-type $L^2$-log-Sobolev inequality and a hypercontractivity estimate are obtained in  ${\sf RCD}(0,N)$ spaces. Our results are new, even in the smooth setting of Riemannian/Finsler manifolds. In particular, an extension of  the celebrated rigidity result of Ni (\textit{J. Geom. Anal.}, 2004) on Riemannian manifolds will be a  simple consequence of our sharp log-Sobolev inequality. 
	\end{abstract}
	\maketitle 
		\vspace{-1cm}
	\tableofcontents

	\section{Introduction and main results}
	
Different forms of the log-Sobolev inequality appear as indispensable tools to  describe nonlinear phenomena, such as the solution of Poincar\'e's conjecture (see Perelman \cite{Perelman}), quantum field theory (see e.g. Glimm and Jaffe \cite{GJ}), hypercontractivity estimates for Hopf--Lax semigroups (see e.g. Bobkov,  Gentil and Ledoux \cite{BobkovGL}, Gentil \cite{Gentil}, Otto and Villani \cite{OV1, OV2}), equilibrium for spin systems (see Guionnet and Zegarlinski \cite{GZ}), or hydrodynamic scalings for systems of interacting particles (see Yau \cite{Yau}). 
	
	 The  optimal Euclidean $L^p$-log-Sobolev inequality for $1 < p < n$  in $\mathbb R^n$
is due to  Del Pino and Dolbeault \cite{delPinoDolbeault-2}, and states that, for every Sobolev function  $u \in {W}^{1,p}(\mathbb R^n)$ with $\displaystyle\int_{\mathbb R^n} |u|^p dx = 1$, one has the inequality 
\begin{equation}\label{e-sharp-log-Sobolev}
	\int_{\mathbb R^n}  |u|^p\log |u|^p dx\leq \frac{n}{p}	\log\left(\mathcal
	L_{p,n}\displaystyle\int_{\mathbb R^n} |\nabla u|^p dx\right).
\end{equation}
In the above inequality, we used the notation $\mathcal{L}_{p,N}$ for any two given numbers $p,N>1$ to denote 	
$$		\mathcal
		L_{p,N}={\small
			\frac{p}{N}\left(\frac{p-1}{e}\right)^{p-1}\left(\sigma_N{\Gamma\left(\frac{N}{p'}+1\right)}\right)^{-\frac{p}{N}}}, 
$$
where $\sigma_N=\frac{\pi^\frac{N}{2}}{\Gamma(\frac{N}{2}+1)},$  $\Gamma$ being the usual Euler Gamma-function, and $p'=\frac{p}{p-1}$ is the conjugate of $p$. Note that $\mathcal L_{p,N}$ is well-defined for any $N>1$, and not just for integer values. We also mention that 
 the constant $\mathcal L_{p,n}$ in the inequality \eqref{e-sharp-log-Sobolev} is sharp. 
 This  inequality  appeared first in the paper of Weissler \cite{Weissler} for $p=2$, which is equivalent to the dimension-free log-Sobolev inequality of Gross \cite{Gross}, and it was stated for the Gaussian probability measure $\gamma_G$ with $d\gamma_G(x)=\frac{1}{(2\pi)^\frac{n}{2}}e^{-\frac{|x|^2}{2}}dx$, i.e., for every function  $u \in {W}^{1,2}(\mathbb R^n,\gamma_G)$ with $\displaystyle\int_{\mathbb R^n} u^2 d\gamma_G = 1$, one has 
\begin{equation}\label{Gross-inequality}
	\int_{\mathbb R^n}  u^2\log u^2 d\gamma_G\leq 2	\displaystyle\int_{\mathbb R^n} |\nabla u|^2 d\gamma_G.
\end{equation} 
Soon after the paper \cite{delPinoDolbeault-2} was published, 
Gentil \cite{Gentil}  by the Pr\'ek\'opa--Leindler inequality, and Agueh, Ghoussoub and  Kang \cite{AGK} by optimal mass transportation, showed  that \eqref{e-sharp-log-Sobolev} is valid for all $p > 1$.

 Moreover, the optimal transport method was applicable even in the case of curved structures, where the influence of the Ricci curvature plays an important role. Indeed, by optimal transport arguments, Cordero-Erausquin, McCann and Schmuckenschl\"{a}ger established in \cite{CEMS2} (see also Cordero-Erausquin \cite{Cordero})  a Bakry--\'Emery-type estimate   \cite{BE}, i.e., if $(M, g)$ is a complete Riemannian manifold,  $V: M \to \mathbb R$ is a smooth enough function, and $K >0$ that satisfy the curvature condition 
$$ {\rm Hess}_x V + {\rm Ric}_x \geq K\, {\rm Id}\ \  \ {\rm for\ all} \ x \in M,$$
then, for the weighted Riemannian measure $d \gamma_V = e^{-V} d{\rm vol}$, the inequality 
\begin{equation}\label{curved-Gross-inequality}
	\int_{M}  u^2\log u^2 d\gamma_V\leq \frac{2}{K}	\displaystyle\int_{M} |\nabla u|^2 d\gamma_V,
\end{equation} 
holds for any  $u\in W^{1,2}(M,\gamma_V)$ such that $\displaystyle\int_M u^2 d\gamma_V = 1$.\ 
In the Euclidean case where $M= \mathbb{R}^n$, one can obtain the Gaussian log-Sobolev inequality \eqref{Gross-inequality} from \eqref{curved-Gross-inequality}, by choosing $K =1$  and $V(x)= \frac{1}{2}|x-x_0|^2$ for some $x_0\in M$. However, it is not possible to recover by \eqref{curved-Gross-inequality} the optimal Euclidean $L^2$-log-Sobolev inequality \eqref{e-sharp-log-Sobolev} by choosing $V\equiv c$. In this respect, note that also in the work of Barthe and Kolesnikov \cite{BartheK} general versions of log-Sobolev inequalities were proven for Riemannian manifolds for measures with the tail behavior of order $e^{-|x|^a}$ for some $a>0$ whenever $|x|\gg 1$ . 
Moreover, we observe that the multiplicative constant $\frac{2}{K}$ on the right side of \eqref{curved-Gross-inequality} blows up as $K \to 0$, indicating that it might be difficult (or even impossible) to find an appropriate version of \eqref{curved-Gross-inequality} for general manifolds with $0$ lower bound on their Ricci curvature.  

This is precisely the main purpose of our paper; namely, to prove a \textit{sharp} version of the  $L^p$-log-Sobolev inequality \eqref{e-sharp-log-Sobolev} (and eventually a corresponding version of the  Gaussian log-Sobolev inequality \eqref{Gross-inequality}) for a general metric measure space $(X,d, {\sf m})$ satisfying the curvature-dimension condition ${\sf CD}(0,N)$ in the sense of Lott--Sturm--Villani (see \cite{LV, Sturm-1, Sturm-2}). This class of metric measure spaces contains as particular examples Riemannian/Finsler manifolds with non-negative Ricci curvature and their Gromov--Hausdorff limits. Metric cones studied by Bacher and Sturm \cite{BS} and Ketterer \cite{Ketterer} are also included in this class; for more examples, we refer to \cite{BK}. 

In order to formulate our result, we need to introduce an additional notion. Given a metric measure space $(X,d, {\sf m})$ that satisfies  the curvature-dimension condition ${\sf CD}(0,N)$, according to Bishop--Gromov theorem, see Sturm \cite{Sturm-2}, one can define the \textit{asymptotic volume ratio}
$${\sf AVR}_ {\sf m}=\lim_{r\to \infty}\frac{ {\sf m}(B(x,r))}{\sigma_Nr^N},$$
which is independent of the choice of $x\in X$. 
Let $W^{ 1,p}(X,d,{\sf m})$ be the space of real-valued Sobolev functions over $X$, and $|\nabla u | \in  L^p( {X,\sf m})$ be the minimal $p$-weak upper gradient of $u \in W^{ 1,p}(X,d,{\sf m}),$  which exists  $ {\sf m}$-a.e.\ on $X$, see \S \ref{section2}; these notions were introduced by Heinonen and Koskela \cite{HK} (see also Hajlasz \cite{Hajlasz} and Shanmugalingam  \cite{Shanmugalingam}). These concepts were extensively studied by Cheeger \cite{Cheeger} in the context of  differentiable structures of metric measures spaces. They play an important role in the study of Hamilton--Jacobi equations in the general metric measure setting, see Ambrosio,  Gigli and Savar\'e  \cite{AGS, AGS-2}.

	  Our first main result provides the natural and sharp extension of \eqref{e-sharp-log-Sobolev} to the class of ${\sf CD}(0,N)$ spaces. 
	
	\begin{theorem}\label{log-Sobolev-main} Let $N,p>1$, and 
		 $(X,d, {\sf m})$ be a ${\sf CD}(0,N)$ space with 
		${\sf AVR}_ {\sf m}>0.$
		 Then, for every  
		$ u\in W^{ 1,p}(X,d,{\sf m})$ with $\displaystyle\int_X |u|^pd  {\sf m}=1$, one has that 
		\begin{equation}\label{LSI}
			\int_{X}|u|^p\log |u|^pd {\sf m}\leq \frac{N}{p}\log\left(\mathcal
			L_{p,N}{\sf AVR}_ {\sf m}^{-\frac{p}{N}}\int_X |\nabla u|^pd {\sf m} \right). 
		\end{equation}
	In addition, the constant  $\mathcal
	L_{p,N}{\sf AVR}_ {\sf m}^{-\frac{p}{N}}$ in \eqref{LSI} is sharp$:$ if there exists $C>0$ such that for every  
	$ u\in W^{ 1,p}(X,d,{\sf m})$  with $\displaystyle\int_X |u|^pd  {\sf m}=1$ one has
	\begin{equation} \label{LSI-CCC}
		\int_{X}|u|^p\log |u|^pd {\sf m}\leq \frac{N}{p}\log\left(C\int_X |\nabla u|^pd {\sf m} \right),
	\end{equation}
	then
		\begin{equation} \label{C-estimate}
		C \geq  \mathcal L_{p, N}{\sf AVR}_ {\sf m}^{-\frac{p}{N}}.
		\end{equation}
	
	\end{theorem}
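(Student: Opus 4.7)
The plan is to reduce the inequality \eqref{LSI} to the Euclidean sharp log-Sobolev inequality \eqref{e-sharp-log-Sobolev} via symmetrisation, with the asymptotic volume ratio entering as a global measure-theoretic correction. Given a nonnegative $u\in W^{1,p}(X,d,{\sf m})$ with $\int_X |u|^p d{\sf m}=1$, I first introduce the radial rearrangement $u^{\star}\colon\mathbb R^N\to [0,\infty)$ as the unique nonincreasing radial function on Euclidean space whose superlevel sets satisfy $|\{u^{\star}>t\}| = {\sf m}(\{|u|>t\})/{\sf AVR}_{\sf m}$ for every $t>0$. The layer-cake formula then yields $\int_X F(|u|)\,d{\sf m} = {\sf AVR}_{\sf m}\int_{\mathbb R^N} F(u^{\star})\,dx$ for every Borel $F\geq 0$; specialising to $F(s)=s^p$ and to $F(s)=s^p\log s^p$ gives
\[{\sf AVR}_{\sf m}\int_{\mathbb R^N}(u^{\star})^p dx = 1,\qquad {\sf AVR}_{\sf m}\int_{\mathbb R^N}(u^{\star})^p\log(u^{\star})^p dx = \int_X |u|^p\log|u|^p\,d{\sf m}.\]

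The main analytic step is the P\'olya--Szeg\H o-type comparison
\[{\sf AVR}_{\sf m}\int_{\mathbb R^N}|\nabla u^{\star}|^p\,dx \;\leq\; \int_X |\nabla u|^p\,d{\sf m},\]
which I would derive by combining the coarea formula available for Sobolev functions on ${\sf CD}(0,N)$ spaces with the sharp isoperimetric inequality
\[{\sf m}^+(E)\;\geq\; N\,\sigma_N^{1/N}\,{\sf AVR}_{\sf m}^{1/N}\,{\sf m}(E)^{(N-1)/N}\]
available in this setting. The crucial observation is that the right-hand side equals ${\sf AVR}_{\sf m}$ times the Euclidean perimeter of a ball of Lebesgue measure ${\sf m}(E)/{\sf AVR}_{\sf m}$; a Talenti-type one-dimensional reduction along level sets then yields the claimed gradient comparison. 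Setting $v := {\sf AVR}_{\sf m}^{1/p} u^{\star}$, which has unit $L^p$-norm on $\mathbb R^N$, and applying the Euclidean inequality \eqref{e-sharp-log-Sobolev} of Del Pino--Dolbeault/Gentil to $v$ gives
\[\int_X |u|^p\log|u|^p\,d{\sf m} + \log{\sf AVR}_{\sf m} \;\leq\; \frac{N}{p}\log\!\left(\mathcal{L}_{p,N}\,{\sf AVR}_{\sf m}\int_{\mathbb R^N}|\nabla u^{\star}|^p\,dx\right),\]
and combining with the P\'olya--Szeg\H o inequality collapses the extra $\log{\sf AVR}_{\sf m}$ term into the factor ${\sf AVR}_{\sf m}^{-p/N}$ inside the logarithm, producing precisely \eqref{LSI}.

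For sharpness I would test \eqref{LSI-CCC} against a family inspired by the Euclidean extremisers of \cite{delPinoDolbeault-2}: fix $x_0\in X$ and, for $\lambda>0$, set $u_\lambda(x) := \gamma_\lambda\exp(-\lambda\,d(x,x_0)^{p'})$, with $\gamma_\lambda>0$ normalising $\int_X |u_\lambda|^p d{\sf m}=1$. As $\lambda\to 0^+$ the mass of $u_\lambda$ spreads out, and the ratio ${\sf m}(B(x_0,r))/(\sigma_N r^N)$ converges to ${\sf AVR}_{\sf m}$ by definition; a careful asymptotic expansion of each of the three integrals in \eqref{LSI-CCC} along this family, carried out via the coarea formula and the tail behaviour of the distance function, forces \eqref{C-estimate} in the limit. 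The main obstacle I foresee is the P\'olya--Szeg\H o step: in the nonsmooth metric measure framework one needs the coarea formula, the sharp isoperimetric information, and good regularity of level sets of Sobolev functions to cooperate, a nontrivial technical package even once the sharp isoperimetric inequality in ${\sf CD}(0,N)$ is in hand; the sharpness argument is then comparatively routine, its only delicate point being the bookkeeping of the ${\sf AVR}_{\sf m}$-corrections as $\lambda\to 0^+$.
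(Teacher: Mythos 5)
Your strategy for \eqref{LSI} --- symmetrisation, an isoperimetry-driven P\'olya--Szeg\H o inequality carrying the factor ${\sf AVR}_{\sf m}^{p/N}$, and a sharp log-Sobolev inequality on a model space --- is exactly the paper's, and your bookkeeping of the ${\sf AVR}_{\sf m}$-factors (the substitution $v={\sf AVR}_{\sf m}^{1/p}u^{\star}$ and the collapse of $\log {\sf AVR}_{\sf m}$ into ${\sf AVR}_{\sf m}^{-p/N}$) is correct. The one genuine gap is the choice of model space: you rearrange into $\mathbb R^N$ and invoke the Euclidean inequality \eqref{e-sharp-log-Sobolev} of Del Pino--Dolbeault/Gentil, but the theorem allows \emph{any} real $N>1$ (both the ${\sf CD}(0,N)$ condition and the constant $\mathcal L_{p,N}$ are defined for non-integer $N$), and for non-integer $N$ there is no Euclidean space $\mathbb R^N$ and no Del Pino--Dolbeault inequality to quote. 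The paper avoids this by rearranging onto the one-dimensional weighted half-line $([0,\infty),\vert\cdot\vert,\omega)$ with $\omega=N\sigma_N r^{N-1}\mathcal L^1$ and replacing \eqref{e-sharp-log-Sobolev} by the sharp weighted log-Sobolev inequality \eqref{log-Sob-1D} on cones from \cite{BDK}, valid for arbitrary homogeneity degree $\tau=N-1>0$; with that substitution your argument goes through essentially verbatim (for integer $N$ your radial picture is equivalent to it). The P\'olya--Szeg\H o step you rightly flag as the main obstacle is available off the shelf in exactly the needed form, $\int_X|\nabla u|^p\,d{\sf m}\ge {\sf AVR}_{\sf m}^{p/N}\int_0^\infty|\hat u'|^p\,d\omega$, due to Nobili--Violo \cite{NV}.

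For the sharpness, your test family $u_\lambda=\gamma_\lambda e^{-\lambda d(x_0,\cdot)^{p'}}$ with $\lambda\to0^+$ is, after reparametrisation, the same one the paper uses, so the approach is right; but the ``comparatively routine'' bookkeeping is where the paper spends most of its effort. One must identify the density of the pushforward measure ${d_{x_0}}_\#{\sf m}$ with respect to $\omega$ as the normalised Minkowski content $\theta_{N,r}^+={\sf m}^+(B(x_0,r))/(N\sigma_N r^{N-1})$ (which requires proving absolute continuity and using Sturm's identity ${\sf m}(B(x_0,r))=\int_0^r{\sf m}^+(B(x_0,s))\,ds$), prove that $\theta^+_{N,r}\to{\sf AVR}_{\sf m}$ as $r\to\infty$, and justify the passage to the limit $\lambda\to0^+$ in all three integrals --- the paper does this first for compactly supported profiles, where dominated convergence is clean, and only then inserts the Gaussian by approximation. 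These are precisely the points your sketch defers, and they constitute the bulk of the argument rather than an afterthought.
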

	
Let us note that in the case of the Euclidean space we have ${\sf AVR}_ {\sf m}= {\sf AVR}_ {\sf \mathcal{L}^n}= 1$ and thus \eqref{LSI} is a direct generalization of \eqref{e-sharp-log-Sobolev}. In the case of a Riemannian manifold $(M,g)$ (endowed with its canonical measure $dv_g$), one has  $0\leq {\sf AVR}_ {dv_g} \leq 1$, which makes the constant on the right side of \eqref{LSI} slightly worse than in the Euclidean case. In particular, Theorem \ref{log-Sobolev-main} implies a generalisation of the  rigidity result of Ni \cite{Ni} in the Riemannian context;  see  Remark \ref{remark-Ni} for details. 
	
We mention at this point that certain versions of log-Sobolev inequalities proven by Lott and Villani in \cite{LV} for ${\sf CD}(K, N)$ spaces with $K>0$ (see also Bakry,  Gentil and Ledoux \cite{BakryGL}) are similar to \eqref{curved-Gross-inequality},  as they also feature a multiplicative factor $\frac{1}{K}$ which goes to $\infty$ as $K \to 0$. As previously observed, this supports the fact that it is not possible to obtain a sharp log-Sobolev inequality by a simple limiting argument, without the additional assumption that ${\sf AVR}_ {\sf m}>0$. Furthermore, one can see the second statement of Theorem \ref{log-Sobolev-main} as a converse of the first one in  the sense that if $(X,d, {\sf m})$ is a ${\sf CD}(0,N)$  space that supports an $L^p$-log-Sobolev inequality of the type \eqref{LSI-CCC}, then ${\sf AVR}_ {\sf m}>0$, see \eqref{C-estimate}. Therefore, the condition ${\sf AVR}_ {\sf m}>0$ is a necessary and sufficient condition for the validity of an Euclidean-type log-Sobolev inequality in the framework of ${\sf CD}(0, N)$ spaces with a precise relationship between the optimal log-Sobolev constant and the value ${\sf AVR}_ {\sf m}>0$; for a more precise statement, see Remark \ref{remark-equiv}. We notice that Bakry and Ledoux \cite[Theorem 3]{B-L} proved an $L^2$-log-Sobolev inequality for diffusion operators satisfying a certain ${\sf CD}(0, N)$ condition in terms of the Bakry-\'Emery \textit{carr\'e de champ} on Riemannian manifolds under a special condition on the generator of the diffusion operator. However, this results does not capture the  ${\sf AVR}_ {\sf m}$ constant as the Euclidean constant appears on the right side of the log-Sobolev inequality. 

The proof of \eqref{LSI} combines several known results by now: (a) the recent sharp isoperimetric inequality in ${\sf CD}(0,N)$ spaces, proved by Balogh and Krist\'aly \cite{BK}; (b)  a sharp weighted log-Sobolev inequality on cones, see Balogh, Don and Krist\'aly \cite{BDK}, and (c) P\'olya--Szeg\H o type  rearrangement arguments on ${\sf CD}(0,N)$ spaces, due to Nobili and Violo \cite{NV, NV2}, see also Mondino and Semola \cite{MSemola}. 
	In order to prove the inequality \eqref{LSI}, a  nonnegative function $ u\in W^{ 1,p}(X,d,{\sf m})$ is rearranged with respect to the $1$-dimensional model space $[0,\infty)$ endowed with the measure $\omega=N\sigma_Nr^{N-1}\mathcal L^1$, see \S \ref{subsection212} for details. Since the entropy-term remains invariant under such rearrangement, the P\'olya--Szeg\H o inequality involving the term ${\sf AVR}_ {\sf m}$, see \cite{NV, NV2} (which follows from a suitable co-area formula and the isoperimetric inequality from \cite{BK}), and the sharp weighted log-Sobolev inequality on the model cone $([0,\infty), |\cdot|,\omega) $ imply inequality in \eqref{LSI}. We notice that the limit case $p=\infty$ in \eqref{LSI} can also be obtained; a similar result is established by Fujita \cite{Fujita} in the Euclidean case. 
	
	The proof of the \textit{sharpness} of the constant $\mathcal
	L_{p,N}{\sf AVR}_ {\sf m}^{-\frac{p}{N}}$ in \eqref{LSI} is more complicated, where a careful 
	transition from the original measure ${\sf m}$ to the measure $\omega=N\sigma_Nr^{N-1}\mathcal L^1$ is performed, combined with a subtle 
	scaling argument which produces the factor  ${\sf AVR}_ {\sf m}$. We notice that an alternative (but less general) proof of the same sharpness is known from \cite{Kristaly-Calculus}, where some robust  ordinary differential equations/inequalities are compared.

	The first nontrivial application of Theorem \ref{log-Sobolev-main} is a sharp  hypercontractivity estimate for the Hopf--Lax semigroup on metric measure spaces $(X,d, {\sf m})$ satisfying the ${\sf CD}(0,N)$ condition. 
		Let $p>1$; for  $t>0$ and  $u:X\to \mathbb R$  we consider the Hopf--Lax formula 
	$$		{\bf Q}_{t}u(x)\coloneqq\inf_{y\in X}\left\{u(y)+\frac{d(x,y)^{p'}}{p't^{p'-1}}\right\}, \ x\in X.
	$$
	 By convention, ${\bf Q}_{0}u=u.$ 
	 
	 Before stating the result, let us notice that  when  $(X,d, {\sf m})$  is a ${\sf CD}(K,N)$ space with $K>0$ (thus, $X$ is compact) and $u$ is a bounded Lipschitz function on $X$, one has that 	${\bf Q}_{t}u(x)\in \mathbb R$ for every $(t,x)\in (0,\infty)\times X$. Note however that the limit case ${\sf CD}(0,N)$ requires finer properties on the function $u:X\to \mathbb R$; 
	 indeed, if we still keep the above class of functions and ${\sf m}(X)=+\infty$ (i.e., ${\sf m}$ cannot be normalized to a probability measure), then the $L^\alpha(X,{\sf m})$-norm of $e^u$ (which appears in the hypercontractivity estimate, see \eqref{hyperc-estimate} below) would be $+\infty$ for every $\alpha>0$.  Thus,  in order to provide a meaningful  hypercontractivity estimate for  Hopf--Lax semigroups on ${\sf CD}(0,N)$ spaces,  a suitable class of functions is needed.
	 
	 Accordingly, for some $t_0>0$ and $x_0\in X$, we shall consider the class of functions $u:X\to \mathbb R$, denoted by $\mathcal  F_{t_0,x_0}(X)$, which satisfy the following assumptions: 
	\begin{itemize}
		\item[(A1)] $u\in {\rm Lip}_{\rm loc}(X)$ and the set $u^{-1}([0,\infty))\subset X$ is bounded; 
			\item[(A2)]   ${\bf Q}_{t_0}u(x_0)>-\infty;$
	 	\item[(A3)] there exist $M\in \mathbb R$ and $C_0>p't_0^{p'-1}$ such that $$u(x)\geq M-\frac{d^{p'}(x,x_0)}{C_0}\ ,\ \ \forall x\in X.$$
	\end{itemize}
One can prove that ${\bf Q}_{t}u(x) \in \mathbb R$ for every $(t,x)\in (0,t_0)\times X$, whenever $u \in \mathcal  F_{t_0,x_0}(X)$, see Proposition \ref{proposition-hyper}. 
Our second result provides the following sharp  hypercontractivity estimate on  ${\sf CD}(0,N)$ spaces. 
	
	\begin{theorem}\label{Hopf--Lax-theorem}
		Let  $N>1$ and 
		$(X,d, {\sf m})$ be a ${\sf CD}(0,N)$ space with 
		${\sf AVR}_ {\sf m}>0$. Let $t_0>0$ and $x_0\in X$ be fixed. 	
		 Then for every $p>1$,  $0<\alpha\leq \beta$,  $ t\in (0,t_0)$ and  $u\in \mathcal F_{t_0,x_0}(X)$ with $(1+d^{p'}(x_0,\cdot))e^{\alpha u}\in  L^1(X,{\sf m})$ and $e^\frac{\alpha u}{p}\in   W^{ 1,p}(X,d,{\sf m})$,  we have that
		\begin{equation}\label{hyperc-estimate}
			\|e^{{\bf Q}_{t}u}\|_{L^{\beta}(X,{\sf m})}\leq \|e^{u}\|_{L^{\alpha}(X,{\sf m})}\left(\frac{\beta-\alpha}{ t}\right)^{\frac{N}{p}\frac{\beta-\alpha}{\alpha\beta}}\frac{\alpha^{\frac{N}{\alpha\beta}(\frac{\alpha}{p}+\frac{\beta}{p'})}}{\beta^{\frac{N}{\alpha\beta}(\frac{\beta}{p}+\frac{\alpha}{p'})}}\left(\mathcal
			L_{p,N}{\sf AVR}_ {\sf m}^{-\frac{p}{N}}\frac{Ne^{p-1}}{p^p}\right)^{\frac{N}{p}\frac{\beta-\alpha}{\alpha\beta}}.
		\end{equation} 
		In addition, the constant  $\mathcal
		L_{p,N}{\sf AVR}_ {\sf m}^{-\frac{p}{N}}\frac{Ne^{p-1}}{p^p}$ in \eqref{hyperc-estimate} is sharp whenever $\alpha<\beta$. 
	\end{theorem}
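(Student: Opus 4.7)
The approach is the Bobkov--Gentil--Ledoux semigroup interpolation argument, driven by Theorem~\ref{log-Sobolev-main} and by the Hamilton--Jacobi inequality $\partial_s{\bf Q}_s u + p^{-1}|\nabla{\bf Q}_s u|^p\le 0$, valid ${\sf m}\otimes\mathcal L^1$-a.e.\ in ${\sf CD}(0,N)$ spaces (cf.\ Ambrosio--Gigli--Savar\'e \cite{AGS,AGS-2}), adapted to the class $\mathcal F_{t_0,x_0}(X)$. Introduce
\[
v_s:=e^{{\bf Q}_s u},\qquad F(s):=\int_X v_s^{q(s)}\,d{\sf m},\qquad \Lambda(s):=\frac{\log F(s)}{q(s)}=\log\|e^{{\bf Q}_s u}\|_{L^{q(s)}(X,{\sf m})},
\]
for a smooth increasing interpolation $q:[0,t]\to[\alpha,\beta]$ with $q(0)=\alpha$, $q(t)=\beta$ to be optimised later, and let $\rho_s:=v_s^{q(s)}/F(s)$ be the associated probability density on $X$. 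The hypotheses $(1+d^{p'}(x_0,\cdot))e^{\alpha u}\in L^1(X,{\sf m})$ and $e^{\alpha u/p}\in W^{1,p}(X,d,{\sf m})$, together with the tail control (A3) propagated along ${\bf Q}_s$, secure finiteness of $F(s)$ and differentiability of $\Lambda$ on $(0,t)$. Combining the chain rule $|\nabla v_s|=v_s|\nabla{\bf Q}_s u|$ for the minimal $p$-weak upper gradient with the Hamilton--Jacobi inequality, and justifying differentiation under the integral, one obtains
\[
q(s)\Lambda'(s)\ \le\ \frac{q'(s)}{q(s)}\,\mathcal E(s)-\frac{q(s)}{p}\,\bar H(s),
\]
where $\mathcal E(s):=\int_X \rho_s\log\rho_s\,d{\sf m}$ and $\bar H(s):=\int_X |\nabla{\bf Q}_s u|^p\,\rho_s\,d{\sf m}$.

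Applying Theorem~\ref{log-Sobolev-main} to $\phi_s:=\rho_s^{1/p}$ (which satisfies $\int\phi_s^p\,d{\sf m}=1$), combined with the identity $|\nabla\phi_s|^p=(q(s)/p)^p\rho_s|\nabla{\bf Q}_s u|^p$, yields $\mathcal E(s)\le \frac{N}{p}\log(\mathcal L_{p,N}{\sf AVR}_{\sf m}^{-p/N}(q(s)/p)^p\bar H(s))$. The unknown $\bar H(s)$ is then removed via the tangent inequality $\log y\le \lambda y-1-\log\lambda$ with $\lambda=\lambda(s)$ chosen precisely so that the $\bar H(s)$-contribution cancels, producing
\[
\Lambda'(s)\ \le\ \frac{q'(s)\,N}{p\,q(s)^2}\,\log\!\left(\frac{\mathcal L_{p,N}\,{\sf AVR}_{\sf m}^{-p/N}\,N\,q'(s)\,q(s)^{p-2}}{e\,p^p}\right).
\]
Integrating from $0$ to $t$ and changing variables via $u=q(s)$, the integrand decouples into pieces in $u^{-2}$ and $u^{-2}\log u$, with the interpolation entering only through $\log q'(s(u))$. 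Viewing the interpolation as a free positive function subject to $\int_\alpha^\beta du/q'(s(u))=t$, a Lagrange-multiplier argument identifies the minimiser $q^*(s)=\alpha\beta t/(\beta t-(\beta-\alpha)s)$, equivalently $q'(s(u))=u^2(\beta-\alpha)/(\alpha\beta t)$. With this choice the remaining elementary integrals $\int_\alpha^\beta u^{-2}\,du=(\beta-\alpha)/(\alpha\beta)$ and $\int_\alpha^\beta u^{-2}\log u\,du=\alpha^{-1}\log\alpha-\beta^{-1}\log\beta+(\beta-\alpha)/(\alpha\beta)$ (integration by parts) recombine, after exponentiation, into exactly the right-hand side of \eqref{hyperc-estimate}; in particular the $e^{p-1}$ factor in the sharp constant arises from combining the $-1$ of the tangent inequality with the $p-2$ exponent in $q(s)^{p-2}$.

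For the sharpness claim (case $\alpha<\beta$), the plan is to test \eqref{hyperc-estimate} on the Del Pino--Dolbeault extremals in the Euclidean model. In $\mathbb R^N$ with Lebesgue measure (so ${\sf AVR}_{\sf m}=1$), take $u(x):=-A|x-x_0|^{p'}/p'$ with $A>0$ in the admissible range so that $u\in\mathcal F_{t_0,x_0}(\mathbb R^N)$; the ansatz is preserved by the Hopf--Lax evolution, ${\bf Q}_s u(x)=-A(s)|x-x_0|^{p'}/p'$, with $A(s)=A(1-A^{p-1}s)^{-1/(p-1)}$ read off from (HJ), and one checks that (HJ) holds with equality for this family. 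For the value of $A$ that makes $\phi_s$ coincide at every $s\in(0,t)$ with a sharp-log-Sobolev extremal along the flow $q=q^*$, every inequality in the derivation above becomes an equality, and both $\|e^u\|_\alpha$ and $\|e^{{\bf Q}_t u}\|_\beta$ reduce to explicit Gamma-type integrals that match term by term in \eqref{hyperc-estimate}. Sharpness for a general ${\sf CD}(0,N)$ space with ${\sf AVR}_{\sf m}>0$ is then inherited by the same scaling/blow-down argument used in the sharpness part of Theorem~\ref{log-Sobolev-main}, which recovers the factor ${\sf AVR}_{\sf m}^{-p/N}$. The main obstacle will be the nonsmooth analytic justifications behind the differential step: propagating membership in $\mathcal F_{t_0,x_0}(X)$ and the Sobolev regularity of $e^{q(s){\bf Q}_s u/p}$ along the Hopf--Lax flow, verifying (HJ) and the chain rule at the level of minimal weak upper gradients for $v_s$, and legitimising differentiation under the integral in $F(s)$; the tail bound (A3) together with the two integrability hypotheses on $u$ are precisely the inputs that allow this groundwork to be carried out, after which the Euler--Lagrange optimisation and the explicit integration above are routine.
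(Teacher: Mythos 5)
Your derivation of the inequality \eqref{hyperc-estimate} coincides with the paper's: the same Bobkov--Gentil--Ledoux interpolation with the same optimal exponent $q^*(s)=\alpha\beta t/(\beta t-(\beta-\alpha)s)$ (the paper simply writes it down rather than deriving it by a Lagrange-multiplier computation), the same application of Theorem \ref{log-Sobolev-main} to the normalised density, the same tangent-line inequality $\log(ey)\le y$, and the same final integration. You correctly flag that the real work is the nonsmooth groundwork (finiteness and local Lipschitzness of ${\bf Q}_tu$ and of $F$, the Hamilton--Jacobi inequality, and $e^{q(t){\bf Q}_tu/p}\in W^{1,p}$); the paper devotes Propositions \ref{proposition-hyper} and \ref{proposition-hyper-2} to exactly this, using (A1)--(A3) and the metric inequality \eqref{metric-relation} as you anticipate. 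So the positive half is fine.

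The sharpness half has a genuine gap. Testing on the family $u=-A|x-x_0|^{p'}/p'$ can at best establish sharpness on $(\mathbb R^N,|\cdot|,\mathcal L^N)$; your transfer to a general ${\sf CD}(0,N)$ space ``by the same scaling/blow-down argument used in the sharpness part of Theorem \ref{log-Sobolev-main}'' does not go through. That argument (Proposition \ref{propo-1}) is tailored to the log-Sobolev functional, which is \emph{local}: one plugs in radial functions $\tilde u\circ d_{x_0}$, pushes the measure forward to $[0,\infty)$, and rescales $\tilde u(r)=v(\lambda r)$ with $\lambda\to0^+$ to replace the density $\theta^+_{N,r/\lambda}$ by ${\sf AVR}_{\sf m}$. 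The hypercontractivity inequality involves the \emph{nonlocal} operator ${\bf Q}_t$, and two things break: (i) on a general space, ${\bf Q}_t(\tilde u\circ d_{x_0})$ is not the one-dimensional Hopf--Lax of $\tilde u$ composed with $d_{x_0}$ (the infimum runs over all of $X$, and $d(x,y)$ only dominates $|d(x_0,x)-d(x_0,y)|$, giving a one-sided comparison); and (ii) the Hopf--Lax semigroup does not commute with spatial rescaling --- one has ${\bf Q}_t(v(\lambda\cdot))=({\bf Q}_{\lambda^{p}t}v)(\lambda\cdot)$, so the blow-down $\lambda\to0^+$ drives the effective time to $0$ and the hypercontractivity statement degenerates before the factor ${\sf AVR}_{\sf m}$ can be extracted. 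This is precisely why the paper proves sharpness by a different mechanism: assuming an improved constant $\mathcal C<\mathcal L_{p,N}{\sf AVR}_{\sf m}^{-p/N}$, it takes $\beta(t)=\alpha+yt$, divides by $t$ and lets $t\to0^+$, using the reverse Hamilton--Jacobi bound $\frac{d^+}{dt}\big|_{t=0}{\bf Q}_tu\ge-|Du|^p/p$ (Proposition \ref{proposition-hyper}/(iv)) to control the derivative of $\overline F$ from below; optimising over $y$ then yields an $L^p$-log-Sobolev inequality with constant $\mathcal C$, contradicting \eqref{C-estimate}. If you want to keep your strategy, you would have to either prove sharpness of the $t\to0^+$ linearisation (which is the paper's route in disguise) or construct near-extremal test functions directly on $(X,d,{\sf m})$, neither of which your plan currently supplies. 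A secondary, smaller issue: even in $\mathbb R^N$ you would need to verify that equality in the tangent inequality $\log(ey)\le y$ (i.e.\ $y\equiv1$ along the flow) and equality in the log-Sobolev step can be met simultaneously by a single choice of $A$; this is plausible but not checked in your outline.
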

	

	We  notice that the equivalence of the log-Sobolev inequality and hypercontractivity of the Hopf--Lax semigroup has already been studied in the Euclidean setting by Gentil \cite{Gentil}, and Bobkov, Gentil and Ledoux \cite{BobkovGL}. The link between these two results is based on the fact that the Hopf--Lax semigroup satisfies the Hamilton--Jacobi equation.  For compact metric measure spaces, the same link was established by Lott and Villani \cite{ LV-JMPA} and Gozlan, Roberto and Samson \cite{GRS}. In this case, the Hopf--Lax semigroup satisfies a Hamilton--Jacobi inequality that is enough for the purpose of the equivalence. 
	
	 In our setting the same general strategy will be applied. 
 However, in the present situation of nonsmooth and noncompact spaces the implementation of this general strategy becomes  rather subtle. We need to make sure that we are working in the right function class. For example if we choose $u$ in the class of  bounded functions, then the right side of  \eqref{hyperc-estimate} will be infinity and so the claim will be trivially true. This  kind of problem motivated us to search for a suitable class of functions with precise \textit{regularity and growth properties} in order to obtain meaningful estimates. The choice of the function class as described above allows us to establish good properties of  integrals involving the Hopf--Lax semigroup 	${\bf Q}_{t}u$, see Propositions \ref{proposition-hyper} \& \ref{proposition-hyper-2}, that will enable us to carry out the strategy of the proof. Another result that we needed in the proof was the fact that  a Hamilton--Jacobi inequality is satisfied by the Hopf--Lax semigroup in metric measure spaces that was shown  by Ambrosio, Gigli and Savar\'e \cite{AGS}. 
		
	The second application of Theorem \ref{log-Sobolev-main} is an $L^2$-Gaussian log-Sobolev inequality; it turns out that our approach requires the framework of metric measure spaces satisfying the Riemannian curvature-dimension condition ${\sf RCD}(0,N)$, studied in great detail by 	Ambrosio, Gigli and Savar\'e \cite{AGS-Duke}, Ambrosio, Mondino and Savar\'e \cite{AMondinoSavare},
	Erbar, Kuwada and Sturm \cite{EKS}  and Cavalletti and Milman \cite{Cavalletti-MIlman}; for details,    see \S \ref{Gaussian-section}, and Ambrosio \cite{Ambrosio} and  Gigli \cite{ Gigli2} for further insights on the theory of ${\sf RCD}(0,N)$ spaces. 
	In order to state the result, for a fixed point $x_0\in X$, we consider  the ${\sf m}$-\textit{Gaussian probability measure} 
	$$
	d{\sf m}_{G,x_0}(x):=G^{-1}e^{-\frac{d^2(x_0,x)}{2}}d{\sf m}(x)\ \ {\rm with}\ \ \displaystyle G=\int_Xe^{-\frac{d^2(x_0,x)}{2}}d {\sf m}(x),
	$$ and the ${\sf m}$-\textit{density at} $x\in X,$ i.e., 
	\begin{equation}\label{m-density}
			\theta_{\sf m}(x)=\lim_{r\to 0}\frac{ {\sf m}(B(x,r))}{\sigma_Nr^N}.
	\end{equation}
 A simple consequence of the Bishop--Gromov comparison principle  is that 
	$$\theta_ {\sf m}(x)\geq {\sf AVR}_ {\sf m},\ \ \forall x\in X.$$ 
	The  $L^2$-Gaussian log-Sobolev inequality in ${\sf RCD}(0,N)$ spaces reads as follows: 
		
	\begin{theorem}\label{Gaussian-theorem}
		Let $(X,d, {\sf m})$ be an ${\sf RCD}(0,N)$ space with $N>1$ and assume that ${\sf AVR}_ {\sf m}>0$. Given $x_0\in X$,  for any $u\in W^{1,2}(X,d,{\sf m}_{G,x_0})$ such that  $\displaystyle \int_Xu^2 d {\sf m}_{G,x_0}=1$, one has
		\begin{equation}\label{Gaussian-log-Sobolev}
			\int_Xu^2\log u^2 d {\sf m}_{G,x_0}\le 2\int_X\vert\nabla u\vert^2 d {\sf m}_{G,x_0}+\log\frac{\theta_{\sf m}(x_0)}{{\sf AVR}_ {\sf m}}.
		\end{equation}
	In addition, the constant $2$ is sharp in \eqref{Gaussian-log-Sobolev}. 
	\end{theorem}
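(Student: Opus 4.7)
The plan is to derive \eqref{Gaussian-log-Sobolev} from Theorem~\ref{log-Sobolev-main} applied at $p=2$ through the substitution $v \coloneqq G^{-1/2} e^{-d^2(x_0,\cdot)/4} u$, chosen so that $\int_X v^2\, d{\sf m} = \int_X u^2\, d{\sf m}_{G,x_0}=1$, which makes $v$ admissible in \eqref{LSI}. A direct calculation gives
\[
\int_X v^2 \log v^2\, d{\sf m} = \int_X u^2 \log u^2\, d{\sf m}_{G,x_0} - \log G - \frac{1}{2}\int_X d^2(x_0,\cdot)\, u^2\, d{\sf m}_{G,x_0}.
\]

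Next, I would invoke the Leibniz rule for the minimal weak upper gradient, which is available in the ${\sf RCD}(0,N)$ setting because the Cheeger energy is a quadratic form and therefore admits a bilinear pairing $\langle\nabla\cdot,\nabla\cdot\rangle$ by polarization. Together with the identity $|\nabla d(x_0,\cdot)|=1$ ${\sf m}$-a.e., this yields
\[
\int_X |\nabla v|^2\, d{\sf m} = \int_X \Big[|\nabla u|^2 - d(x_0,\cdot)\,u\,\langle\nabla d(x_0,\cdot),\nabla u\rangle + \frac{1}{4} d^2(x_0,\cdot)\,u^2\Big]\, d{\sf m}_{G,x_0}.
\]
The cross term I would rewrite as $\frac{1}{4}\int_X \langle \nabla u^2, \nabla d^2(x_0,\cdot)\rangle\, d{\sf m}_{G,x_0}$, transfer back to integration against ${\sf m}$, and integrate by parts to produce $\Delta(d^2(x_0,\cdot)/2)$ under the integral. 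The decisive analytic input is then the Laplacian comparison $\Delta(d^2(x_0,\cdot)/2)\leq N$ valid in the ${\sf RCD}(0,N)$ framework, which leads to
\[
\int_X |\nabla v|^2\, d{\sf m} \leq \int_X |\nabla u|^2\, d{\sf m}_{G,x_0} + \frac{N}{2} - \frac{1}{4}\int_X d^2(x_0,\cdot)\,u^2\, d{\sf m}_{G,x_0}.
\]

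Applying Theorem~\ref{log-Sobolev-main} to $v$ and linearizing the resulting logarithm by the elementary estimate $\log y \leq y/a - 1 + \log a$ with the choice $a = N\mathcal{L}_{2,N}{\sf AVR}_{\sf m}^{-2/N}/4$, the coefficient of $\int_X |\nabla u|^2\, d{\sf m}_{G,x_0}$ becomes exactly $2$ and the $d^2(x_0,\cdot) u^2$ terms cancel on both sides. Inserting the explicit value $\mathcal{L}_{2,N} = 2/(Ne\pi)$, the remaining constants collapse into $\log\bigl(G\, {\sf AVR}_{\sf m}^{-1} (2\pi)^{-N/2}\bigr)$. A layer-cake representation $G = \int_0^\infty r\, e^{-r^2/2}\,{\sf m}(B(x_0,r))\, dr$ combined with the Bishop--Gromov inequality ${\sf m}(B(x_0,r)) \leq \theta_{\sf m}(x_0)\sigma_N r^N$ yields $G \leq \theta_{\sf m}(x_0) (2\pi)^{N/2}$, which converts this constant into $\log(\theta_{\sf m}(x_0)/{\sf AVR}_{\sf m})$ and establishes \eqref{Gaussian-log-Sobolev}.

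For the sharpness of the factor $2$, the cleanest observation is that $\mathbb{R}^n$ endowed with Lebesgue measure is an ${\sf RCD}(0,n)$ space with $\theta_{\sf m}(x_0) = {\sf AVR}_{\sf m} = 1$, so the additive correction disappears and \eqref{Gaussian-log-Sobolev} reduces to the classical Gross--Weissler inequality \eqref{Gross-inequality}, whose constant $2$ is well known to be optimal; for non-integer $N$ the same conclusion can be reached through test functions concentrated near a regular point of $X$ whose tangent cone is Euclidean. The principal obstacle is the rigorous justification of the Leibniz rule, chain rule, and integration-by-parts identity used on $v = G^{-1/2} e^{-d^2(x_0,\cdot)/4} u$ in the nonsmooth setting; this is precisely where the Riemannian character of ${\sf RCD}(0,N)$, namely the Hilbertian Cheeger energy and the Laplacian comparison for $d^2(x_0,\cdot)$, is essential and cannot be replaced by the pure ${\sf CD}(0,N)$ condition.
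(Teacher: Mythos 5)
Your derivation of the inequality \eqref{Gaussian-log-Sobolev} is essentially the paper's own argument: the same substitution $v=G^{-1/2}e^{-d^2(x_0,\cdot)/4}u$, the same use of the bilinear pairing provided by infinitesimal Hilbertianity together with the eikonal equation and the Laplacian comparison $\mathbf{\Delta}\big(\tfrac{d_{x_0}^2}{2}\big)\le N{\sf m}$, the same linearization of the logarithm (your $\log y\le y/a-1+\log a$ with $a=N\mathcal L_{2,N}{\sf AVR}_{\sf m}^{-2/N}/4$ is exactly the paper's $\log(ey)\le y$ applied to $y=\frac{4}{N}\int_X|\nabla v|^2d{\sf m}$), and the same layer-cake plus Bishop--Gromov bound $G\le\theta_{\sf m}(x_0)(2\pi)^{N/2}$. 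The constants check out. The only omission here is minor: the case $\theta_{\sf m}(x_0)=+\infty$ should be dispatched first (the inequality is then trivial), and one should verify along the way that $\int_Xu^2d_{x_0}^2\,d{\sf m}_{G,x_0}<\infty$ so that the cancellation of these terms on both sides is legitimate; the paper does both.

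The genuine gap is in the sharpness of the constant $2$. The theorem asserts optimality of $2$ on the \emph{given} space $(X,d,{\sf m})$, not merely that some ${\sf RCD}(0,N)$ space saturates the inequality. Observing that $(\mathbb R^n,|\cdot|,\mathcal L^n)$ reduces \eqref{Gaussian-log-Sobolev} to \eqref{Gross-inequality} proves sharpness only for that one example, and the proposed extension via ``test functions concentrated near a regular point whose tangent cone is Euclidean'' is a program, not a proof: it would require the structure theory of ${\sf RCD}$ spaces (almost-everywhere existence of Euclidean tangents, whose dimension is an integer $n\le N$ and need not equal $N$ when $N$ is non-integer or the space is collapsed), a rigorous transplantation of concentrated Gaussians onto $X$, and control of the error terms against the additive constant $\log(\theta_{\sf m}(x_0)/{\sf AVR}_{\sf m})$ --- none of which is supplied. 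The paper instead runs a global scaling argument that works on any such space: assuming \eqref{C-contradiction} with $C<2$, it tests with $u_\lambda=(G/f(\lambda))^{1/2}e^{(\frac14-\frac\lambda2)d^2(x_0,\cdot)}$, where $f(\lambda)=\int_Xe^{-\lambda d^2(x_0,\cdot)}d{\sf m}$, multiplies the resulting inequality by $\lambda$ and lets $\lambda\to0^+$; the large-scale asymptotics $\lambda^{N/2}f(\lambda)\to\pi^{N/2}{\sf AVR}_{\sf m}$ (Bishop--Gromov plus monotone convergence) then force $0\le\frac N4\left(\frac C2-1\right)$, i.e.\ $C\ge2$. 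You need an argument of this kind (or an equivalent one) to close the sharpness claim for a generic ${\sf RCD}(0,N)$ space.
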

In the  Euclidean case with the Lebesgue measure, i.e.,  $(\mathbb{R}^n,\vert\cdot\vert,\mathcal{L}^n)$,   we have that $\theta_{\mathcal{L}^n}(x_0)=1$  for any $x_0\in \mathbb{R}^n$, $G=(2\pi)^\frac{n}{2}$ and $\sf{AVR}_{\mathcal{L}^n}=1$, so we recover \eqref{Gross-inequality} as a particular case of \eqref{Gaussian-log-Sobolev}. 

Note also how the statement of the above result might depend qualitatively on the choice of the basepoint $x_0 \in X$. Indeed, e.g.\ for the model space  $(X, d, {\sf m})= ([0, \infty), |\cdot |, \omega)$ with $\omega=N\sigma_Nr^{N-1}\mathcal L^1$, we have that $\theta_{\sf m}(x_0)= \infty $ for $x_0 \neq 0$ and $\theta_{\sf m}(x_0) =1$ when $x_0=0$; therefore, the statement of Theorem \ref{Gaussian-theorem} becomes meaningful only by choosing $x_0=0$. 

 We pointed out that our argument is based on the  infinitesimal Hilbertianity of the space which is guaranteed by   ${\sf RCD}(0,N)$ structures. It is known that the constant $2$ in  \eqref{Gross-inequality} is sharp, and the equality case has been characterized by Carlen \cite[Theorem 5]{Carlen}; here,  we  provide a new proof -- based on scalings --  of the sharpness of  2 in the inequality  \eqref{Gaussian-log-Sobolev} on  generic ${\sf RCD}(0,N)$ spaces.

We also notice the comparability of \eqref{Gaussian-log-Sobolev} and \eqref{Gross-inequality}  (resp. \eqref{curved-Gross-inequality}) as well as the dimension-free character of \eqref{Gaussian-log-Sobolev}, which contains only a quantitative information about the space $(X,d, {\sf m})$ in terms of the `amplitude' of the volume ratio $r\mapsto \frac{ {\sf m}(B(x,r))}{\sigma_Nr^N}$, appearing in the form $\frac{\theta_{\sf m}(x_0)}{{\sf AVR}_ {\sf m}}\geq 1$. Comparing the inequality \eqref{curved-Gross-inequality} with the  expression of \eqref{Gaussian-log-Sobolev}, we can see that the presence of the additive constant on the right side of \eqref{Gaussian-log-Sobolev} compensates for the lack of positive lower bound on the Ricci curvature; in this sense we can interpret the   ${\sf AVR}_ {\sf m}>0$ condition as a kind of ``positive curvature at infinity''.  Moreover, it turns out that if $\theta_ {\sf m}(x_0)= {\sf AVR}_ {\sf m}$ for some $x_0\in X$, then we are in the rigid setting where `volume cone implies metric cone',  i.e., the space  $(X,d, {\sf m})$ is isometric to a certain model metric cone; for details, see De Philippis  and   Gigli \cite{DP-G}.  Due to the defective   log-Sobolev inequality \eqref{Gaussian-log-Sobolev}, a hypercontractivity bound for the ${\sf m}$-Gaussian measure is also stated.

The paper is organized as follows. In Section \ref{section2}, we provide those notions and results that are indispensable to prove our main results; namely, basic properties of ${\sf CD}(0,N)$ spaces (Bishop--Gromov comparison principle and isoperimetric inequality), rearrangement arguments and P\'olya--Szeg\H o inequality on ${\sf CD}(0,N)$  spaces, as well as a sharp log-Sobolev inequality on the $1$-dimensional weighted model space. Section \ref{section3} is devoted to the proof of Theorem \ref{log-Sobolev-main}, most of the arguments being focused on the proof of the sharpness of the log-Sobolev inequality \eqref{LSI}. In Section \ref{section4} we prove the sharp hypercontractivity estimate in ${\sf CD}(0,N)$ spaces (i.e., Theorem \ref{Hopf--Lax-theorem}). In Section  \ref{Gaussian-section} we recall some basic properties of ${\sf RCD}(0,N)$ spaces (e.g. the Laplace comparison for the distance function) which are needed to prove the sharp $L^2$-Gaussian log-Sobolev inequality in ${\sf RCD}(0,N)$ spaces (see Theorem \ref{Gaussian-theorem}). Finally, Section \ref{section-final} is devoted to presenting some problems which are closely related to our results, and represent further open questions.

		\section{Preliminaries}\label{section2}
	\subsection{Properties of ${\sf CD}(0,N)$ spaces}

\subsubsection{${\sf CD}(0,N)$ spaces, Bishop--Gromov comparison principle and isoperimetric inequality}

Let us fix $(X,d, {\sf m})$  a metric measure space, i.e.,
$(X,d)$ is a complete separable metric space and 
$ {\sf m}$ is a locally finite measure on $X$ endowed with its
Borel $\sigma$-algebra. 

For $p \geq 1$ we denote by $L^p(X,{\sf m})=\left\{ u\colon X\to\mathbb{R}: u\text{ is measurable, }\displaystyle\int_X\vert u\vert^p\,d{\sf m}<\infty\right\}, $ the set of $p$-integrable functions; as usual, functions in the latter definition which are equal {\sf m}-a.e.\ are  identified. The notation  ${\rm Lip}_c(X)$ stands for the space of real-valued Lipschitz functions with compact support in $X$ and ${\rm Lip}_{\rm loc}(X)$ is the space of real-valued locally Lipschitz functions on $X$.

Let	$P_2(X,d)$ be the
	$L^2$-Wasserstein space of probability measures on $X$, while
	$P_2(X,d, {\sf m})$ is  the subspace of
	$ {\sf m}$-absolutely continuous measures on $X$.
For  $N> 1,$ let 
${\rm Ent}_N(\cdot| {\sf m}):P_2(X,d)\to \mathbb R$ be  
the {\it R\'enyi entropy functional} with
respect to the measure $ {\sf m}$, defined by 
\begin{equation}\label{entropy}
	{\rm Ent}_N(\nu| {\sf m})=-\int_X \rho^{-\frac{1}{N}}{\rm d}\nu=-\int_X \rho^{1-\frac{1}{N}}{\rm d} {\sf m},
\end{equation}
where $\rho$ denotes the density function of $\nu^{\rm ac}$ in
$\nu=\nu^{\rm ac}+\nu^{\rm s}=\rho  {\sf m}+\nu^{\rm s}$, where $\nu^{\rm ac}$ and $\nu^{\rm s}$
represent the absolutely continuous and singular parts of $\nu\in
P_2(X,d),$ respectively.

The \textit{curvature-dimension condition} ${\sf CD}(0,N)$
states that for all $N'\geq N$ the functional ${\rm Ent}_{N'}(\cdot|\,u)$ is
convex on the $L^2$-Wasserstein space $P_2(X, d, {\sf m})$, i.e.,  for each
$ {\sf m}_0, {\sf m}_1\in  P_2(X,{d}, {\sf m})$ there exists
a geodesic
$\Gamma:[0,1]\to  P_2(X,{d}, {\sf m})$ joining
$ {\sf m}_0$ and $ {\sf m}_1$ such that for every $s\in [0,1]$ one has 
$${\rm Ent}_{N'}(\Gamma(s)| {\sf m})\leq (1-s) {\rm Ent}_{N'}( {\sf m}_0| {\sf m})+s {\rm Ent}_{N'}( {\sf m}_1| {\sf m}),$$
see Lott and Villani \cite{LV} and Sturm \cite{Sturm-2}.

The \textit{Minkowski content} of $\Omega\subset X$ is given by
\begin{equation}\label{minkowski-content}
{ {\sf m}}^+(\Omega)=\liminf_{\varepsilon \to 0^+}\frac{ {\sf m}(\Omega_\varepsilon\setminus \Omega)}{\varepsilon},
\end{equation}
where $\Omega_\varepsilon=\{x\in X:\exists\, y\in \Omega\ {\rm such\ that}\ d(x,y)< \varepsilon\}$ is the $\varepsilon$-neighborhood of $\Omega$ w.r.t.  the metric $d$.

If $(X,d, {\sf m})$ is a metric measure space satisfying the ${\sf CD}(0,N)$ condition for some $N>1$, then the Bishop--Gromov comparison principle states that both functions 
$$r\mapsto \frac{ {\sf m}^+(B(x,r))}{r^{N-1}}\ \ {\rm and}\ \ r\mapsto \frac{ {\sf m}(B(x,r))}{r^{N}},\ \ r>0,$$
are non-increasing on $[0,\infty)$ for every $x\in X$, see Sturm \cite{Sturm-2}, where $B(x,r)=\{y\in X: d(x,y)<r\}$ is the ball with center $x\in X$ and radius $r>0.$ By the latter Bishop--Gromov monotonicity, the \textit{asymptotic volume ratio}
 $${\sf AVR}_ {\sf m}=\lim_{r\to \infty}\frac{ {\sf m}(B(x,r))}{\sigma_Nr^N},$$
 is well-defined, i.e., it is independent of the choice of $x\in X$.

If ${\sf AVR}_ {\sf m}>0 $, we have the \textit{sharp isoperimetric inequality }on $(X,d, {\sf m})$, namely, for  every bounded Borel subset $\Omega\subset X$ it holds  
\begin{equation}\label{eqn-isoperimetric}
	 {\sf m}^+(\Omega)\geq N\sigma_N^\frac{1}{N}{\sf AVR}_ {\sf m}^\frac{1}{N} {\sf m}(\Omega)^\frac{N-1}{N},
\end{equation}
and the constant $N\sigma_N^\frac{1}{N}{\sf AVR}_ {\sf m}^\frac{1}{N}$ in  \eqref{eqn-isoperimetric} is sharp, see Balogh and Krist\'aly \cite{BK}. We notice that \eqref{eqn-isoperimetric} has been stated already in the smooth setting of Riemannian manifolds by 	Agostiniani,  Fogagnolo and  Mazzieri \cite{Agostiniani-etal, Fogagnolo-Maz-JFA}, Brendle \cite{Brendle} and Johne \cite{Johne}. 
 By using an approximation argument, see Ambrosio, Di Marino and Gigli \cite[Theorem 3.6]{ADMG}, or Nobili and Violo \cite[Proposition 3.10]{NV},  relation \eqref{eqn-isoperimetric} implies that for every open Borel set $\Omega\subset X$ one has
\begin{equation}\label{eqn-isoperimetric-2}
	{\rm Per}(\Omega)\geq N\sigma_N^\frac{1}{N}{\sf AVR}_ {\sf m}^\frac{1}{N} {\sf m}(\Omega)^\frac{N-1}{N},
\end{equation}
where 
$${\rm Per}(\Omega)=\inf\left\{\liminf_{n\to \infty} \int_X |D u_n| d {\sf m}:u_n\in {\rm Lip}_{\rm loc}(X),\ u_n\to\chi_\Omega\ {\rm in}\ L^1_{\rm loc}(X) \right\}$$
denotes the \textit{perimeter} of $\Omega$ in $X$, see Ambrosio and Di Marino \cite{ADM-JFA} and Miranda \cite{Miranda}; here, we  used the 
notion of \textit{local Lipschitz constant} $|Du|(x)$ at $x\in X$  for  $u\in {\rm Lip}_{\rm loc}(X)$,  i.e.,
$$|Du|(x)=\limsup_{y\to x}\frac{|u(y)-u(x)|}{d(x,y)}.$$

	\subsubsection{Rearrangement and P\'olya--Szeg\H o inequality on ${\sf CD}(0,N)$  spaces}\label{subsection212}
	
Let $(X,d, {\sf m})$ be a metric measure space satisfying the ${\sf CD}(0,N)$ condition for some $N>1$ and ${\sf AVR}_ {\sf m}>0.$ We first introduce the natural Sobolev space over $(X,d, {\sf m})$, following e.g.\  Cheeger \cite{Cheeger} and Ambrosio,  Gigli and Savar\'e \cite{AGS}. Let $p>1$. The $p$-\textit{Cheeger energy} ${\sf Ch}_p:L^p(X,{\sf m})\to [0,\infty]$ is defined as the convex and lower semicontinuous functional 
$${\sf Ch}_p(u)=\inf\left\{\liminf_{n\to \infty} \int_X |D u_n|^p d {\sf m}:(u_n)\subset {\rm Lip}(X)\cap L^p(X,{\sf m}),\ u_n\to u\ {\rm in}\ L^p(X,{\sf m})  \right\}.$$
Then  $$W^{ 1,p}(X,d,{\sf m})=\{u\in L^p(X,{\sf m}):{\sf Ch}_p(u)<\infty\}$$ is  the $p$-Sobolev space over $(X,d, {\sf m})$, endowed with the norm 
$\|u\|_{W^{ 1,p}}=\left(\|u\|^p_{L^p(X,{\sf m})}+{\sf Ch}_p(u)\right)^{1/p}. $
Note that $W^{ 1,p}(X,d,{\sf m})$ is a Banach space, which turns out to be also reflexive (due to the fact that $(X,d, {\sf m})$ is doubling), see Ambrosio,  Colombo and  Di Marino \cite{Ambrosio-Colombo}. By the relaxation of the $p$-Cheeger energy,  one can define the minimal $ {\sf m}$-a.e. object $|\nabla u | \in  L^p(X, {\sf m})$, the so-called  \textit{minimal $p$-weak upper gradient of} $u \in W^{ 1,p}(X,d,{\sf m}),$ such that 
$${\sf Ch}_p(u)= \int_X |\nabla u|^p\, d {\sf m}.$$

Since $(X,d, {\sf m})$ is a ${\sf CD}(0,N)$ space, it is doubling (by Bishop--Gromov comparison principle) and  supports the $(1,p)$-Poincar\'e inequality for every $p\geq 1$, see Rajala \cite{Rajala}; thus,  for every $u\in {\rm Lip}_{\rm loc}(X)$ one has  
\begin{equation}\label{Cheeger-equality}
	|Du|(x)=|\nabla u |(x),\ {\sf m}{\rm -a.e.} \ x\in X,
\end{equation}
see Cheeger \cite[Theorem 6.1]{Cheeger}, and 
\begin{equation}\label{Sobolev-second-definition}
W^{ 1,p}(X,d,{\sf m})=\overline{{\rm Lip}_c(X)}^{\|\cdot\|_{W^{ 1,p}}}=\overline{\mathcal W^{ 1,p}(X,d,{\sf m})}^{\|\cdot\|_{W^{ 1,p}}},
\end{equation}
where $\mathcal W^{ 1,p}(X,d,{\sf m})=\{u\in {\rm Lip}_{\rm loc}(X)\cap L^p(X,{\sf m}) :|Du|\in L^p(X,{\sf m})\},$ see e.g.\ Gigli \cite[Theorem 2.8]{Gigli1}. By \eqref{Cheeger-equality}, one has for  any fixed $x_0\in X$ the eikonal equation, i.e., 
\begin{equation}\label{eikonal}
	|\nabla d_{x_0}|=1\ \ {\sf m}{\rm -a.e.},
\end{equation}
where $d_{x_0}(x):=d(x_0,x)$, $x\in X$;  
 see also  Gigli \cite[Theorem 5.3]{Gigli1}.

For a measurable function $u\colon X\to[0,\infty)$, we consider its non-increasing rearrangement $\Hat{u}\colon[0,\infty)\to[0,\infty)$ defined on the 1-dimensional model space $([0,\infty),\vert\cdot\vert,\omega=N\sigma_Nr^{N-1}\mathcal L^1)$ so that
	\begin{align}\label{rearrangement}
		 {\sf m}\big(M_t(u)\big)=\omega(M_t(\hat{u})),\ \ \forall t>0,
	\end{align}
	where
	\begin{align*}
		M_t(u)=\lbrace x\in X: u(x)>t\rbrace\;\text{ and }\;M_t(\hat{u})=\lbrace y\in[0,\infty):\hat{u}(y)>t\rbrace,
	\end{align*}
whenever ${\sf m}\big(M_t(u)\big)<\infty$ for every $t>0$. 
In particular, the open set $\Omega\subset X$ can be rearranged into the interval $\Omega^*=[0,r]$ with $ {\sf m}(\Omega)=\omega([0,r])$, i.e., $ {\sf m}(\Omega)=\sigma_N r^N,$ with the convention that $\Omega^*=[0,\infty)$ if $ {\sf m}(\Omega)=+\infty$.
Using the isoperimetric inequality \eqref{eqn-isoperimetric-2} and the co-area formula (see e.g. Miranda \cite{Miranda}), Nobili and Violo proved recently \cite[Theorem 3.6]{NV} (see also \cite{NV2}) that the following \textit{P\'olya--Szeg\H o inequality} holds for any $u\in W^{ 1,p}(X,d,{\sf m})$:
\begin{equation}\label{Polya-Szego}
	{\sf Ch}_p(u)=\int_X |\nabla u|^pd {\sf m}\geq {\sf AVR}_ {\sf m}^\frac{p}{N}\int_0^\infty|\hat u'(r)|^p d\omega(r).
\end{equation}

	\subsection{Sharp weighted log-Sobolev inequality on the $1$-dimensional model space} 	
	Let $E\subset\mathbb{R}^n$ be an open convex cone and $\omega\colon E\to(0,\infty)$ be a log-concave homogeneous weight of class ${C}^1$ and degree $\tau\ge 0$, i.e., $\omega(\lambda x)=\lambda^\tau\omega(x)$. 
	
	Let $p>1$ and consider  $
		W^{1,p}(E;\omega)=\lbrace u\in L^p(E;\omega):\nabla u\in L^p(E;\omega)\rbrace\,.
	$
According to a recent result of Balogh, Don and Krist\'aly \cite{BDK},  for every function $u\in W^{1,p}(E;\omega)$ for which $\displaystyle\int_E\vert u\vert^p\omega\,dx=1$, we have
	\begin{align}\label{thm 1.1 BDK}
		\mathcal{E}_{\omega,E}(\vert u\vert^p)\colon=\int_E\vert u\vert^p\log\vert u\vert^p\omega\,dx\le \frac{n+\tau}{p}\,\log\bigg(\mathcal{L}_{\omega,p}\int_E\vert\nabla u\vert^p\omega\,dx\bigg)
	\end{align}
	where
$$
		\mathcal{L}_{\omega,p}\colon=\frac{p}{n+\tau}\bigg(\frac{p-1}{e}\bigg)^{p-1}\bigg(\Gamma\bigg(\frac{n+\tau}{p'}+1\bigg)\int_{B\cap E}\omega\,dx\bigg)^{-\frac{p}{n+\tau}}\,,
$$
and $B$ is the unit ball with center at the origin in $\mathbb R^n$.   Moreover, when $\tau>0$, equality holds in \eqref{thm 1.1 BDK} if and only if  the extremal function belongs to the family of Gaussians
\begin{equation}\label{Gaussian}
	u_{\lambda,x_0}(x)=\lambda^\frac{n+\tau}{pp'}\left(\Gamma\left(\frac{n+\tau}{p'}+1\right)\int_{B\cap E}\omega\,dx\right)^{-\frac{1}{p}}e^{-\lambda\frac{|x+x_0|^{p'}}{p}},\ \  x\in E, \ \lambda>0,
\end{equation}
where $x_0\in -\partial E\cap \partial E$ and $\omega(x+x_0)=\omega(x)$ for every $x\in E$.

Let $N>1$ and consider the $1$-dimensional cone $E=[0,\infty)$ endowed with the usual Euclidean distance $\vert\cdot\vert$ and measure $\omega=N\sigma_Nr^{N-1}\mathcal L^1$. It is clear that $\omega$ is a log-concave weight of class $C^1$ with homogeneity $\tau=N-1>0$. In this particular case, \eqref{thm 1.1 BDK} implies that on the   model space  $([0,\infty),\vert\cdot\vert, \omega)$  the 
following log-Sobolev inequality holds: for every $v\in W^{1,p}([0,\infty);\omega)$ with 
$N\sigma_N\displaystyle \int_0^\infty\vert v(r)\vert^p r^{N-1}dr=1$, one has
\begin{equation}\label{log-Sob-1D}
	N\sigma_N\int_{0}^\infty\vert v(r)\vert^p\log\vert v(r)\vert^pr^{N-1}dr\le \frac{N}{p}\log\bigg(\mathcal{L}_{N,p}N\sigma_N\int_0^\infty\vert v'(r)\vert^pr^{N-1}dr\bigg),
\end{equation}
where
\begin{equation}	\label{explicit-L}
		\mathcal
L_{p,N}:=\mathcal{L}_{\omega,p}={\small
	\frac{p}{N}\left(\frac{p-1}{e}\right)^{p-1}\left(\sigma_N{\Gamma\left(\frac{N}{p'}+1\right)}\right)^{-\frac{p}{N}}}. 
	\end{equation}


\section{Sharp log-Sobolev inequality in  ${\sf CD}(0,N)$ spaces: proof of Theorem \ref{log-Sobolev-main}} \label{section3}

\subsection{Proof of the log-Sobolev inequality \eqref{LSI}} Fix any function 
$ u\in W^{ 1,p}(X,d,{\sf m})$  with the property $\displaystyle\int_X |u|^pd  {\sf m}=1$. Without loss of generality, we may assume that $u\geq 0$, since $|\nabla|u||\leq |\nabla u|.$ 

On the one hand,  one can show that the operation of rearrangement described in \S \ref{subsection212} leaves both the $L^p$-norm  and entropy invariant,  i.e.,  we have that $$1=\displaystyle\int_X u^pd  {\sf m}=N\sigma_N\int_0^\infty\hat u(r)^pr^{N-1}dr,$$
and 
$$\int_{X}u^p\log u^pd {\sf m}=N\sigma_N\int_0^\infty\hat u(r)^p\log \hat u(r)^pr^{N-1}dr.$$
The proofs are based on the layer cake representation, see e.g. Lieb and Loss \cite{LL}, combined with   \eqref{rearrangement}; for the entropy term we use in addition the 'signed measure' described in \cite[p. 27]{LL} in order to split in a suitable way the function $s\mapsto s^p\log s^p$, $s>0$.

 On the other hand, by the P\'olya--Szeg\H o inequality 
\eqref{Polya-Szego}, we obtain
$$\int_X |\nabla u|^pd {\sf m}\geq {\sf AVR}_ {\sf m}^\frac{p}{N}N\sigma_N\int_0^\infty|\hat u'(r)|^p r^{N-1}dr.$$
The required inequality \eqref{log-Sobolev-main} follows from the latter relations combined with inequality \eqref{log-Sob-1D}.

\subsection{Sharpness in the log-Sobolev inequality \eqref{LSI}}

The sharpness of \eqref{LSI} follows by the next result. 


\begin{proposition}\label{propo-1}
Let $p,N>1$ and   $(X,d, {\sf m})$ be a ${\sf CD}(0,N)$ space. Assume that for every $ u\in W^{ 1,p}(X,d,{\sf m})\setminus \{0\}$ one has
\begin{equation}\label{LSI-C-2-00}
	\displaystyle	\frac{\displaystyle\int_{X}|u|^p\log |u|^pd {\sf m}}{\displaystyle\int_X |u|^pd  {\sf m}}-\log\left(\displaystyle\int_X |u|^pd  {\sf m}\right)\leq \frac{N}{p}\log\left(C\frac{\displaystyle\int_X |\nabla u|^pd {\sf m}}{\displaystyle\int_X |u|^pd  {\sf m}} \right)
\end{equation}
for some $C>0$. Then 
	\begin{equation}\label{local-prop-0}
	\mathcal
	L_{p,N}\leq {{\sf AVR}_ {\sf m}^\frac{p}{N}}C.
\end{equation}	
\end{proposition}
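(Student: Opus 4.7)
The plan is a concentration-at-infinity argument. By feeding into \eqref{LSI-C-2-00} a family of radial test functions whose supports expand to infinity, one isolates the $1$-dimensional weighted log-Sobolev inequality on the model cone $([0,\infty),|\cdot|,\omega)$ but with the constant ${\sf AVR}_{\sf m}^{p/N}C$ in place of $\mathcal L_{p,N}$. Since $\mathcal L_{p,N}$ is sharp in \eqref{log-Sob-1D} (realised asymptotically by the Gaussian extremals \eqref{Gaussian}), this will force $\mathcal L_{p,N}\leq {\sf AVR}_{\sf m}^{p/N}C$. Concretely, fix $x_0\in X$ and a non-trivial non-negative $v\in {\rm Lip}_c((0,\infty))$, and for each $\lambda>0$ set $u_\lambda(x):=v(d(x_0,x)/\lambda)$. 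Then $u_\lambda\in W^{1,p}(X,d,{\sf m})$, and the eikonal equation \eqref{eikonal} combined with \eqref{Cheeger-equality} yields $|\nabla u_\lambda|(x)=\lambda^{-1}|v'(d(x_0,x)/\lambda)|$ for ${\sf m}$-a.e.\ $x\in X$.

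Writing $\mu(r):={\sf m}(B(x_0,r))$, the three quantities entering the scale-invariant form \eqref{LSI-C-2-00} become Stieltjes integrals against $d\mu$ via the push-forward $(d_{x_0})_*{\sf m}=d\mu$:
\[
A(\lambda):=\int_X u_\lambda^p\,d{\sf m}=\int_0^\infty v(r/\lambda)^p\,d\mu(r),
\]
and analogously for $B(\lambda):=\int_X u_\lambda^p\log u_\lambda^p\,d{\sf m}$ and $D(\lambda)=\lambda^{-p}\int_0^\infty |v'(r/\lambda)|^p\,d\mu(r)$. A Stieltjes integration by parts (permissible since $v$ has compact support in $(0,\infty)$ and $\mu(0)=0$) followed by the change of variable $s=r/\lambda$ recasts each of these as $\lambda^N$ (resp.\ $\lambda^{N-p}$ for $D$) times an integral in which the factor $\mu(\lambda s)/(\sigma_N\lambda^N s^N)$ appears. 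The Bishop--Gromov monotonicity forces this factor to decrease to ${\sf AVR}_{\sf m}$ as $\lambda\to\infty$; Dini's theorem, applied on the compact $s$-support of $v$ (where the monotone limit $\sigma_N s^N{\sf AVR}_{\sf m}$ is continuous), promotes this to uniform convergence and delivers the asymptotics
\[
A(\lambda)\sim \lambda^N\,{\sf AVR}_{\sf m}\,\tilde A,\qquad B(\lambda)\sim \lambda^N\,{\sf AVR}_{\sf m}\,\tilde B,\qquad D(\lambda)\sim \lambda^{N-p}\,{\sf AVR}_{\sf m}\,\tilde D,
\]
where $\tilde A=\|v\|_{L^p(\omega)}^p$, $\tilde D=\|v'\|_{L^p(\omega)}^p$ and $\tilde B=\int_0^\infty v^p\log v^p\,d\omega$.

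Substituting these into \eqref{LSI-C-2-00} with $u=u_\lambda$, the terms $\pm N\log\lambda$ cancel between the two sides (the required scaling consistency), and passing to the limit $\lambda\to\infty$ gives
\[
\frac{\tilde B}{\tilde A}-\log\tilde A\leq\frac{N}{p}\log\!\left({\sf AVR}_{\sf m}^{p/N}\,C\,\frac{\tilde D}{\tilde A}\right)
\]
for every non-trivial $v\in{\rm Lip}_c((0,\infty))$. This is exactly the $1$-dimensional log-Sobolev inequality \eqref{log-Sob-1D} with ${\sf AVR}_{\sf m}^{p/N}C$ in place of $\mathcal L_{p,N}$, tested on a class that approximates (via truncation and mollification) the Gaussian extremals \eqref{Gaussian}; the known sharpness of $\mathcal L_{p,N}$ in \eqref{log-Sob-1D} then delivers \eqref{local-prop-0}. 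The main technical obstacle is the rigorous passage to the limit in the three Stieltjes integrals: the monotone Bishop--Gromov bound $\mu(r)/(\sigma_N r^N)\searrow{\sf AVR}_{\sf m}$, together with the continuity of its limit as a function of $s$, is precisely what legitimises Dini's theorem on $\operatorname{supp}v$ and the ensuing interchange of limit and integration.
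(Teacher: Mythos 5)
Your strategy is the same as the paper's in its essentials --- plug radial test functions built from $d_{x_0}$ into \eqref{LSI-C-2-00}, rescale so that their supports run off to infinity, use the Bishop--Gromov monotonicity to replace the volume data by ${\sf AVR}_{\sf m}$ in the limit, land on the one-dimensional weighted log-Sobolev inequality \eqref{log-Sob-1D} with constant ${\sf AVR}_{\sf m}^{p/N}C$, and conclude from the sharpness of $\mathcal L_{p,N}$ there (your $\lambda\to\infty$ with $v(\cdot/\lambda)$ is the paper's $v(\lambda\,\cdot)$ with $\lambda\to 0^+$). Where you genuinely diverge is in how the limit of the three integrals is justified. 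The paper passes to the push-forward $\nu=(d_{x_0})_\#{\sf m}$, proves $\nu\ll\omega$ with Radon--Nikodym density $\theta^+_{N,r}={\sf m}^+(B(x_0,r))/(N\sigma_Nr^{N-1})$ via the Minkowski content and the coarea-type identity \eqref{N-L}, and then separately proves $\lim_{r\to\infty}\theta^+_{N,r}={\sf AVR}_{\sf m}$ (relation \eqref{AVR-limit}) before taking the limit in \eqref{eq-new-v}. Your Stieltjes integration by parts against $\mu(r)={\sf m}(B(x_0,r))$ bypasses all of that: it needs only the continuity of $\mu$ (which follows from \eqref{N-L}) and the monotone convergence $\mu(r)/(\sigma_Nr^N)\searrow{\sf AVR}_{\sf m}$, upgraded to uniform convergence on $\operatorname{supp}v$ by Dini. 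This is arguably cleaner, since the ``surface'' density $\theta^+_{N,r}$ and its limit never have to be identified; the price is that the identity $\int w\,d\mu=-\int w'\mu\,dr$ must be legitimate for each integrand. At the final step the paper computes explicitly with the Gaussian $v(r)=e^{-r^{p'}/p}$ rather than citing sharpness of \eqref{log-Sob-1D} abstractly; both are fine, and your remark that a cutoff near $0$ is harmless is correct because the weight $N\sigma_Nr^{N-1}$ vanishes there.

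One step needs patching. For the gradient term $D(\lambda)$ the integrand is $|v'(\cdot/\lambda)|^p$, and for a merely Lipschitz $v$ this is only an $L^\infty$ function, not of bounded variation, so the Stieltjes integration by parts you invoke for $A$ and $B$ does not apply to $D$ as written; nor does weak-$*$ convergence of the rescaled measures $\lambda^{-N}(T_\lambda)_\#{\sf m}$ alone give convergence of $\int|v'|^p$ against them, since $|v'|^p$ may be discontinuous on a set charged by the limit. The fix is immediate: run the argument for $v\in C^1_c((0,\infty))$ (or piecewise $C^1$), which is all you need to approximate the Gaussian extremals, and for which $|v'|^p$ is continuous with compact support so that convergence of distribution functions suffices. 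Relatedly, the chain rule gives only $|\nabla u_\lambda|\le\lambda^{-1}|v'(d_{x_0}/\lambda)|$ ${\sf m}$-a.e.\ rather than equality, but this inequality points the right way since $\log$ is increasing --- the paper uses it in the same one-sided form in \eqref{LSI-C-3-0}.
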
 

{\it Proof.} Fix any point $x_0\in X,$ and 
let $u(x):=\tilde u(d_{x_0}(x))$ in \eqref{LSI-C-2-00} for any nonnegative  $\tilde  u\in {\rm Lip}_c([0,\infty))$, where $d_{x_0}(x)=d(x_0,x)$, $x\in X$. [Note that $\tilde u(0)$ need not be $0$.] By using the  non-smooth
chain rule and the eikonal equation \eqref{eikonal}, it turns out that 
\begin{equation}\label{LSI-C-3-0}
	\displaystyle	\frac{\displaystyle\int_{X}\tilde u^p(d_{x_0}(x))\log \tilde u^p(d_{x_0}(x))d {\sf m}(x)}{\displaystyle\int_X \tilde u^p(d_{x_0}(x))d  {\sf m}(x)}-\log\left(\displaystyle\int_X \tilde u^p(d_{x_0}(x))d  {\sf m}(x)\right)\leq \frac{N}{p}\log\left(C\frac{\displaystyle\int_X |\tilde u'(d_{x_0}(x))|^pd {\sf m}(x)}{\displaystyle\int_X \tilde u^p(d_{x_0}(x))d  {\sf m}(x)} \right). 
\end{equation}
If we consider the push-forward measure $\nu={d_{x_0}}_\# {\sf m}$ of $ {\sf m}$ on $[0,\infty)$, a change of variable in \eqref{LSI-C-3-0} yields that for every nonnegative  $\tilde  u\in {\rm Lip}_c([0,\infty))\setminus \{0\}$, one has
\begin{equation}\label{LSI-C-3-1}
	\displaystyle	\frac{\displaystyle\int_0^\infty\tilde u^p(r)\log \tilde u^p(r)d\nu(r)}{\displaystyle\int_0^\infty \tilde u^p(r)d \nu(r)}-\log\left(\displaystyle\int_0^\infty \tilde u^p(r)d \nu(r)\right)\leq \frac{N}{p}\log\left(C\frac{\displaystyle\int_0^\infty |\tilde u'(r)|^pd\nu(r)}{\displaystyle\int_0^\infty \tilde u^p(r)d \nu(r)} \right). 
\end{equation}

Now, we want to `replace' the measure $\nu$ by $\omega$ in \eqref{LSI-C-3-1}, where $\omega=N\sigma_N r^{N-1}\mathcal L^1$, $r>0$.
For simplicity of notations, we consider for every $r>0$,  
\begin{equation}\label{theta-r-N}
	\theta_{N,r}^+=\frac{ {\sf m}^+( B(x_0,r))}{N\sigma_N r^{N-1}},
\end{equation}
where 
$ {\sf m}^+( B(x_0,r))$ is the Minkowski content of $B(x_0,r)$, see \eqref{minkowski-content}. 
We are going to prove that
\begin{equation}\label{RN-relation}
	\frac{d\nu}{d\omega}(r)=\theta_{N,r}^+\ \ {\rm for\ a.e.}\ r>0,	
\end{equation} 
where $\frac{d\nu}{d\omega}$ stands for the 
Radon--Nikodym derivative.  

We first prove that the push-forward measure $\nu={d_{x_0}}_\# {\sf m}$ is absolutely continuous with respect to the measure $\omega=N\sigma_N r^{N-1}\mathcal L^1$ on $[0,\infty)$. To see this, we are going to prove that for every  $A\subset [0,\infty)$ such that $\omega(A)=\mathcal L^1(A)=0$, one has $\nu(A)=0,$ i.e., $ {\sf m}(\hat A)=0,$ where $\hat A=\cup_{r\in A}\partial B(x_0,r).$ Without loss of generality, we may assume that $A$ is compact and $0\notin A$. Fix $\varepsilon>0$ arbitrarily small; we show that $ {\sf m}(\hat A)<\varepsilon.$ Note that by Bishop--Gromov theorem, the function $\phi(r)=\frac{ {\sf m}(B(x_0,r))}{r^N}$ is non-increasing; if $r_0=\min\{r:r\in A\}$ and  $r_1=\max\{r:r\in A\}$, then 
$M:=\max_{r\in A} \phi(r)=\phi(r_0)$ and $K:=N \max_{r\in A} r^{N-1}=Nr_1^{N-1}$. Let $\delta=\frac{\varepsilon}{KM}>0.$ Since $\mathcal L^1(A)=0$,  we can find a finite covering of $A$ by disjoint intervals $\{(a_i,b_i)\}_{i=1,...,L}$ such that
$\sum_{i=1}^L (b_i-a_i)<\delta.$ Due to the fact that $A\subset \cup_{i=1}^L(a_i,b_i)$, one has that 
$\hat A\subset \cup_{i=1}^L (B(x_0,b_i)\setminus B(x_0,a_i))$. Therefore, by using the above monotonicity properties and the mean-value theorem, it follows that 
\begin{eqnarray*}
	{\sf m}(\hat A)&\leq& \sum_{i=1}^L( {\sf m}(B(x_0,b_i))- {\sf m}(B(x_0,a_i)))=\sum_{i=1}^L(\phi(b_i)b_i^N-\phi(a_i)a_i^N)\\&\leq& \sum_{i=1}^L\phi(a_i)(b_i^N-a_i^N)\leq MK\sum_{i=1}^L(b_i-a_i)<MK\delta=\varepsilon. 
\end{eqnarray*}
The arbitrariness of $\varepsilon>0$ implies that $ {\sf m}(\hat A)=0,$ which concludes the proof of the absolutely continuity of $\nu$  with respect to the measure $\omega$. 

 In order to prove \eqref{RN-relation}, we shall use the result of Sturm \cite[Theorem 2.3]{Sturm-2},  according to which
\begin{equation}\label{N-L}
	{\sf m}(B(x_0,r))=\int_0^r {\sf m}^+( B(x_0,s))ds, \ r>0,
\end{equation}
combined with the fact that $r\mapsto \theta_{N,r}^+$ is non-increasing on $(0,\infty).$
This implies in particular,  that the function $r \to  {\sf m}^+( B(x_0,r)) $ is continuous for almost every $r >0$. At the point of continuity of this function, we have that its integral $r \to  {\sf m}(B(x_0,r))$ is differentiable and its derivative is equal to $ {\sf m}^+( B(x_0,r))$. Moreover, by the definition of the $\nu={d_{x_0}}_\# {\sf m}$ we have that
\begin{equation}\label{relation-mu-nu}
	{\sf m}(B(x_0,r))=\nu([0,r)),\ r>0.
\end{equation} 
Let us fix $r>0$ such that the function $t\to  {\sf m}(B(x_0, t))$  is differentiable at $r$ and such that $$\frac{d {\sf m}(B(x_0,r))}{dr}(r) =  {\sf m}^+( B(x_0,r)),$$ which is of full measure in $(0,\infty)$ due to the above consideration; thus, by \eqref{N-L} and \eqref{relation-mu-nu}, one has
$$\frac{d\nu}{d\omega}(r)=\lim_{t\to 0}\frac{\nu((r-t,r+t))}{\omega((r-t,r+t))}=\lim_{t\to 0}\frac{ {\sf m}(B(x_0,r+t))- {\sf m}(B(x_0,r-t))}{\sigma_N ((r+t)^N-(r-t)^N)}=\frac{ {\sf m}^+( B(x_0,r))}{N\sigma_N r^{N-1}}=\theta_{N,r}^+,$$
which is precisely relation \eqref{RN-relation}. 

By \eqref{LSI-C-3-1} and \eqref{RN-relation} one has  
$$
	\displaystyle	\frac{\displaystyle\int_0^\infty\tilde u^p(r)\log \tilde u^p(r)\theta_{N,r}^+d\omega(r)}{\displaystyle\int_0^\infty \tilde u^p(r)\theta_{N,r}^+d\omega(r)}-\log\left(\displaystyle\int_0^\infty \tilde u^p(r)\theta_{N,r}^+d\omega(r)\right)\leq \frac{N}{p}\log\left(C\frac{\displaystyle\int_0^\infty |\tilde u'(r)|^p\theta_{N,r}^+d\omega(r)}{\displaystyle\int_0^\infty \tilde u^p(r) \theta_{N,r}^+d\omega(r)} \right). 
$$
If we insert $\tilde u(r)=v(\lambda r)\geq 0$ for every $\lambda>0$ (with $v\in {\rm Lip}_c([0,\infty))\setminus \{0\}$ and  $0\leq v\leq 1$) into the latter inequality, by the scaling invariance of the log-Sobolev inequality and a change of variables imply that 
\begin{equation}\label{eq-new-v}
	\displaystyle	\frac{\displaystyle\int_0^\infty v^p(s)\log v^p(s)\theta_{N,\frac{s}{\lambda}}^+d\omega(s)}{\displaystyle\int_0^\infty v^p(s)\theta_{N,\frac{s}{\lambda}}^+d\omega(s)}-\log\left(\displaystyle\int_0^\infty v^p(s)\theta_{N,\frac{s}{\lambda}}^+d\omega(s)\right)\leq \frac{N}{p}\log\left(C\frac{\displaystyle\int_0^\infty |v'(s)|^p\theta_{N,\frac{s}{\lambda}}^+d\omega(s)}{\displaystyle\int_0^\infty v^p(s) \theta_{N,\frac{s}{\lambda}}^+d\omega(s)} \right).
\end{equation}
 Now, we prove that 
 \begin{equation}\label{AVR-limit}
 	{\sf AVR}_ {\sf m}=\lim_{r\to \infty}\theta_{N,r}^+.
 \end{equation}
Clearly, the limit exists, due to \eqref{theta-r-N} and the Bishop--Gromov monotonicity property. First, for every $R>0$ one has by \eqref{N-L} that 
$$\frac{	{\sf m}(B(x_0,R))}{\sigma_NR^N}=\frac{1}{\sigma_NR^N}\int_0^R {\sf m}^+( B(x_0,s))ds=\frac{N}{R^N}\int_0^R  s^{N-1}\theta_{N,s}^+ds\geq \theta_{N,R}^+.
$$
If $R\to \infty$, it turns out that ${\sf AVR}_ {\sf m}\geq \lim_{r\to \infty}\theta_{N,r}^+.$
Conversely, if $R>r$,  we have that 
$$\frac{	{\sf m}(B(x_0,R))-	{\sf m}(B(x_0,r))}{\sigma_NR^N}=\frac{1}{\sigma_NR^N}\int_r^R {\sf m}^+( B(x_0,s))ds=\frac{N}{R^N}\int_r^R  s^{N-1}\theta_{N,s}^+ds\leq \frac{R^N-r^N}{R^N}\theta_{N,r}^+.
$$
Letting first $R\to \infty$ and then $r\to \infty$, it follows that ${\sf AVR}_ {\sf m}\leq \lim_{r\to \infty}\theta_{N,r}^+,$ which concludes the proof of \eqref{AVR-limit}. 

Letting $\lambda\to 0^+$ in \eqref{eq-new-v}, the limit in \eqref{AVR-limit} implies that 
\begin{equation}\label{eq-new-vvv}
	\displaystyle	\frac{\displaystyle\int_0^\infty v^p(s)\log v^p(s)d\omega(s)}{\displaystyle\int_0^\infty v^p(s)d\omega(s)}-\log\left({\sf AVR}_ {\sf m}\displaystyle\int_0^\infty v^p(s) d\omega(s)\right)\leq \frac{N}{p}\log\left(C\frac{\displaystyle\int_0^\infty |v'(s)|^pd\omega(s)}{\displaystyle\int_0^\infty v^p(s) d\omega(s)} \right).
\end{equation}
Since the Gaussian function $v(r)=e^{-\frac{r^{p'}}{p}},\ r>0,$ can be approximated by Lipschitz functions with compact support in $[0,\infty)$, we can use it as a test function in \eqref{eq-new-vvv}; now, for this particular choice of $v$ in \eqref{eq-new-vvv},  a straightforward computation yields that 
$$-\frac{N}{p'}-\log\left({\sf AVR}_ {\sf m}\sigma_N \Gamma\left(\frac{N}{p'}+1\right)\right)\leq \frac{N}{p}\log\left(C\left(\frac{p'}{p}\right)^p\frac{N}{p'} \right).$$
After a reorganization of the above terms, it follows that
$\mathcal
L_{p,N}\leq {{\sf AVR}_ {\sf m}^\frac{p}{N}}C,$ which is precisely the required relation \eqref{local-prop-0}.

\hfill $\square$

Some comments are in order.

\begin{remark}\rm \label{remark-equiv}
		A closer look at the two statements of  Theorem \ref{log-Sobolev-main} gives the following result: Given $p,N>1$ and $(X,d, {\sf m})$ a ${\sf CD}(0,N)$ space, the following two statements are equivalent: 
	\begin{itemize}
		\item[(i)] $(X,d, {\sf m})$ supports the $L^p$-log-Sobolev inequality, i.e., there exists $C>0$ such that 
		for every  
		$ u\in W^{ 1,p}(X,d,{\sf m})$  with $\displaystyle\int_X |u|^pd  {\sf m}=1$ one has that 
		\begin{equation} \label{LSI-CC}
		\int_{X}|u|^p\log |u|^pd {\sf m}\leq \frac{N}{p}\log\left(C\int_X |\nabla u|^pd {\sf m} \right); 
		\end{equation}
		\item[(ii)] ${\sf AVR}_ {\sf m}>0.$
	\end{itemize}
	We should mention  that this consideration might not come as a surprise; indeed, a similar equivalence is well-known in the Riemannian context for Sobolev-type inequalities, see  Coulhon and Saloff-Coste \cite{C-SC},  	do Carmo and Xia \cite{doCarmo-Xia}, and Hebey \cite{Hebey}. 
	
	Concerning the proof, we observe that the implication  (ii)$\implies$(i) is precisely the first statement of Theorem \ref{log-Sobolev-main} with $C=\mathcal
	L_{p,N}{\sf AVR}_ {\sf m}^{-\frac{p}{N}}$, while (i)$\implies$(ii) follows from its second statement, written equivalently as 
		$
	 {\sf AVR}_{\sf m} \geq \left( \frac{\mathcal L_{p, N}}{C}\right) ^{\frac{N}{p}}.
	 $
	 With  these observations and by introducing the notation for the optimal log-Sobolev constant 
	$$\mathcal{C}_{LS} := \inf \left\{ C>0: \text{\eqref{LSI-CC} holds for all} \  u\in W^{ 1,p}(X,d,{\sf m})\ \text{ with} \  \displaystyle\int_X |u|^pd  {\sf m}=1 \right\},$$
	we can reformulate the statement of Theorem \ref{log-Sobolev-main} simply into
$
 {\sf AVR}_ {\sf m} = \left( \frac{\mathcal L_{p, N}} {\mathcal{C}_{LS} }\right) ^{\frac{N}{p}}.
$
\end{remark}

\begin{remark}\rm 
	The proof of Proposition \ref{propo-1} is elementary on Riemannian manifolds. Indeed, if $(M,g)$ is an $n$-dimensional  complete Riemannian manifold endowed with its canonical measure $ {\sf m}=dv_g$,  for every $\varepsilon>0$, there exists a local chart
	$(\Omega,\phi)$ of $M$ at the point $x_0\in M$ and a number $\delta>0$
	such that $\phi(\Omega)=B(0,\delta)\subset \mathbb R^n$, and the components $g_{ij}$
	of the metric $g$ satisfy
	\begin{equation}\label{two-sided}
		(1-\varepsilon)\delta_{ij}\leq g_{ij} \leq (1+\varepsilon)\delta_{ij},\ \ i,j=1,...,n,
	\end{equation}
	in the sense of bilinear forms; here, $B(0,\delta)$ is the
	$n-$dimensional Euclidean ball of center $0$ and radius $\delta>0$, see e.g. Hebey \cite{Hebey}. Now, by using \eqref{two-sided}, we can `replace' the measure $ {\sf m}=dv_g$ with the usual Lebesgue measure $\mathcal L^n$ in $\mathbb R^n$, and the use of the Gaussian functions (which are the extremals in the Euclidean log-Sobolev inequality) provides the required estimate. Clearly, such a strategy is not applicable in the nonsmooth setting of ${\sf CD}(0,N)$ spaces, due to the lack of local charts and the two-sided estimate \eqref{two-sided}. 
\end{remark}

\begin{remark}\rm \label{remark-Ni}
	Theorem \ref{log-Sobolev-main} provides generalisation with a simple proof of the celebrated rigidity result of Ni \cite{Ni} for the case $p=2$: 
	
	\textit{Let $(M,g)$ be a complete $n$-dimensional Riemannian manifold with nonnegative Ricci curvature, and $p>1$.  Then 
 the log-Sobolev inequality 
	\begin{equation}\label{LSI-Li}
		\int_{M}|u|^p\log |u|^pd v_g\leq \frac{n}{p}\log\left(\mathcal
		L_{p,n}\int_M |\nabla_g u|^pd v_g \right) 
	\end{equation}
	 holds for every  
	 $ u\in W^{ 1,p}(M)$ with $\displaystyle\int_M |u|^pd  v_g=1$ if and only if $(M,g)$ is isometric to $\mathbb R^n$.} \\
 \noindent Indeed, if \eqref{LSI-Li} holds, by the sharpness of Theorem \ref{log-Sobolev-main} we necessarily have that $\mathcal
L_{p,n}{\sf AVR}_ {dv_g}^{-\frac{p}{n}}\leq \mathcal
L_{p,n}$, i.e., ${\sf AVR}_ {dv_g}\geq 1.$ But, by definition, we always have that  ${\sf AVR}_ {dv_g}\leq 1,$ thus ${\sf AVR}_ {dv_g}= 1,$ which implies that $(M,g)$ is isometric to $\mathbb R^n$, see  Petersen \cite[Exercise 7.5.10]{Petersen}.
\end{remark}


	\section{Sharp hypercontractivity estimates in ${\sf CD}(0,N)$ spaces: proof of Theorem \ref{Hopf--Lax-theorem}}\label{section4}

Before  the proof of \eqref{hyperc-estimate}, we are going to provide some important properties of the Hopf--Lax semigroup. In the sequel, we assume the assumptions of Theorem \ref{Hopf--Lax-theorem} are satisfied. 

\subsection{Hamilton--Jacobi inequality of the Hopf--Lax semigroup}
 The first proposition and its proof are based on ideas developed in \cite{AGS} and \cite{GRS}. 
  
\begin{proposition}\label{proposition-hyper}
	Let $p>1$,  $t_0>0$, $x_0\in X$ and  $u\in \mathcal  F_{t_0,x_0}(X)$ be fixed. Then the following statements hold$:$ 
	\begin{itemize}
		\item[(i)] 	$t_*(x):=\sup\{t>0:{\bf Q}_{t}u(x)>-\infty\}\geq t_0$ for every $x\in X;$
		\item[(ii)] $(t,x)\mapsto {\bf Q}_tu(x)$ is locally Lipschitz on $(0,t_0)\times X;$
		\item[(iii)] for every $(t,x)\in (0,t_0)\times X$ one has the Hamilton--Jacobi inequality
		\begin{equation}\label{HJ-inequality}
				\frac{d^+}{dt}{\bf Q}_tu(x)\le -\frac{\vert D{\bf Q}_tu\vert^p(x)}{p},
		\end{equation}
	where $\frac{d^+}{dt}$ stands for the right derivative$; $
		\item[(iv)] for every $x\in  X$ one has 	\begin{align}\label{ineq claim lemma 4.1}
			\frac{d^+}{dt}\bigg\vert_{t=0}{\bf Q}_tu(x)\ge -\frac{\vert Du\vert^p(x)}{p}.
		\end{align}

	\end{itemize}
\end{proposition}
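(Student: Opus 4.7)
The plan is to establish the four parts in order, using assumptions (A1)--(A3) together with ${\bf Q}_{t_0}u(x_0)>-\infty$ to propagate finiteness and regularity of ${\bf Q}_tu$ throughout $(0,t_0)\times X$. The finiteness in (i) is the foundational step; (ii)--(iii) rest on Hopf--Lax technology from Ambrosio--Gigli--Savar\'e \cite{AGS,AGS-2}; and (iv) is essentially a Legendre-transform computation. I expect the Hamilton--Jacobi inequality in (iii) to be the main obstacle, since it requires lifting the AGS machinery to our noncompact, non-bounded growth class $\mathcal F_{t_0,x_0}(X)$.

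For (i), the plan is to combine (A3) with the elementary Young-type inequality $(a+b)^{p'}\le (1+\varepsilon)a^{p'}+C_\varepsilon b^{p'}$ applied to $d(y,x_0)\le d(y,x)+d(x,x_0)$. This yields
$$u(y)+\frac{d(x,y)^{p'}}{p't^{p'-1}}\ge M-\frac{C_\varepsilon d^{p'}(x,x_0)}{C_0}+\left(\frac{1}{p't^{p'-1}}-\frac{1+\varepsilon}{C_0}\right)d^{p'}(x,y).$$
Since $p't^{p'-1}<p't_0^{p'-1}<C_0$ for $t<t_0$, one fixes $\varepsilon>0$ so that the coefficient of $d^{p'}(x,y)$ is nonnegative; taking the infimum in $y$ gives ${\bf Q}_tu(x)\ge M-(C_\varepsilon/C_0)\,d^{p'}(x,x_0)>-\infty$, proving (i). As a byproduct, the same inequality shows that on compact subsets of $(0,t_0)\times X$ the $\{d(x,y)\ge R\}$-contribution to the infimum tends to $+\infty$ as $R\to\infty$, so every near-minimizer lies in a bounded set. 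This localization is the crucial input for (ii)--(iv).

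For (ii), local Lipschitz continuity on $(0,t_0)\times X$ follows from the standard Hopf--Lax arguments in the metric setting: plugging a near-minimizer of ${\bf Q}_tu(x)$ into the definition of ${\bf Q}_su(x')$ (and vice versa), and exploiting the $y$-localization from (i) to control the $d(x,y)^{p'}$-term under perturbations, yields joint Lipschitz control in $(t,x)$. For (iii), the Hamilton--Jacobi inequality is the metric analog of the sub-solution property of the Hopf--Lax formula established in \cite{AGS,AGS-2}; the task is to verify that (A1)--(A3) together with the regularity of (i)--(ii) certify the AGS hypotheses on all of $(0,t_0)\times X$ despite the fact that $u$ is neither bounded nor of compact support. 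The delicate point here is that the growth condition $C_0>p't_0^{p'-1}$ of (A3) is the exact quantitative ingredient that makes the $y$-localization and the decay of the Hopf--Lax infimum compatible with the AGS framework; once this is in place, the sub-solution property is invoked directly.

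For (iv), which is the cleanest part, rewrite
$${\bf Q}_tu(x)-u(x)=\inf_{y\in X}\left\{(u(y)-u(x))+\frac{d(x,y)^{p'}}{p't^{p'-1}}\right\}.$$
By (A1) and the definition of the local Lipschitz constant, for every $\eta>0$ there exists $\delta>0$ with $u(y)-u(x)\ge -(|Du|(x)+\eta)\,d(x,y)$ whenever $d(x,y)<\delta$. The Young-type estimate from (i) localizes the infimum to the ball $B(x,\delta)$ for $t$ small, because the contribution of $\{d(x,y)\ge\delta\}$ tends to $+\infty$ as $t\to 0^+$. On $B(x,\delta)$, the problem reduces to minimizing the scalar convex function $s\mapsto -(|Du|(x)+\eta)\,s+s^{p'}/(p't^{p'-1})$ over $s\ge 0$; a routine calculation locates the minimum at $s_\ast=(|Du|(x)+\eta)^{p-1}t$ (which is $<\delta$ for $t$ small) with value $-(|Du|(x)+\eta)^p\,t/p$. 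Dividing by $t$, passing to the $\liminf$ as $t\to 0^+$, and then sending $\eta\to 0^+$ delivers (iv).
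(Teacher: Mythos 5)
Your proposal is correct, and its overall architecture coincides with the paper's: localize the Hopf--Lax infimum using the growth hypotheses, quote the Ambrosio--Gigli--Savar\'e subsolution property for (iii), and reduce (iv) to the one-dimensional Young inequality $-As+\frac{1}{p'}s^{p'}\ge -\frac{A^p}{p}$ applied with $A=\vert Du\vert(x)+\eta$. Two steps are organized differently, and both variants work. For (i) the paper splits the infimum so as to use (A2) directly, via ${\bf Q}_tu(x)\ge {\bf Q}_{t_0}u(x_0)+\inf_y\big\{\frac{d(x,y)^{p'}}{p't^{p'-1}}-\frac{d(x_0,y)^{p'}}{p't_0^{p'-1}}\big\}$, whereas you derive the quantitative lower bound ${\bf Q}_tu(x)\ge M-(C_\varepsilon/C_0)\,d^{p'}(x,x_0)$ from (A3) alone; this is essentially the estimate \eqref{Q-t-estimate} that the paper only establishes later, in Proposition \ref{proposition-hyper-2}, and it shows in passing that (A2) is already implied by (A3). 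For (iv) the paper works with actual minimizers $y_n$ at times $t_n\to 0^+$ and proves $d(x,y_n)\to 0$ via a local Lipschitz bound before applying Young's inequality, while you bound the localized infimum from below pointwise on $B(x,\delta)$ and discard the complement; your version avoids any appeal to the existence of minimizers (hence to properness of the space), at the cost of having to check that the far-field contribution blows up as $t\to 0^+$, which the Young-type estimate from your step (i) indeed supplies.
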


{\it Proof.}
	(i)\&(ii)  Let us fix arbitrarily $0<t_1<t_2<t_0$ and the compact set $K\subset X$.  In order to prove the claims, it is enough to show that the function $(t,x) \mapsto {\bf Q}_{t}u(x)$  is well-defined and Lipschitz continuous on $[t_1,t_2]\times K$. Indeed,  one has for every $(t,x)\in [t_1,t_2]\times K$ that
		\begin{eqnarray*}
		{\bf Q}_{t}u(x)&=&\inf_{y\in X}\left\{u(y)+\frac{d(x,y)^{p'}}{p't^{p'-1}}\right\}\\&\geq &\inf_{y\in X}\left\{u(y)+\frac{d(x_0,y)^{p'}}{p't_0^{p'-1}}\right\}+\inf_{y\in X}\left\{\frac{d(x,y)^{p'}}{p't^{p'-1}}-\frac{d(x_0,y)^{p'}}{p't_0^{p'-1}}\right\}\\&=&
		{\bf Q}_{t_0}u(x_0)+\inf_{y\in X}\left\{\frac{d(x,y)^{p'}}{p't^{p'-1}}-\frac{d(x_0,y)^{p'}}{p't_0^{p'-1}}\right\} >-\infty,
	\end{eqnarray*}
	uniformly in $x\in K$; here, we used the assumption (A2), i.e.,  ${\bf Q}_{t_0}u(x_0)>-\infty$, combined with the fact that 
	$$\lim_{d(x_0,y)\to  \infty}\left\{\frac{d(x,y)^{p'}}{p't^{p'-1}}-\frac{d(x_0,y)^{p'}}{p't_0^{p'-1}}\right\}=+\infty,$$
	which follows from  $t<t_2<t_0$ and the  triangle inequality. In particular, by the arbitrariness of $t\in [t_1,t_2]\subset (0,t_0)$, we have that $t_*(x)\geq t_0$ for every $x\in X,$ which proves (i).
	
	Moreover, we can find $R=R_K>0$ such that for every $(t,x)\in [t_1,t_2]\times K$, one has $${\bf Q}_{t}u(x)=\min_{y\in  \overline{B(x_0,R)}}\left\{u(y)+\frac{d(x,y)^{p'}}{p't^{p'-1}}\right\}.$$
	We observe that the function $[t_1,t_2]\times K\ni (t,x)\mapsto u(y)+\frac{d(x,y)^{p'}}{p't^{p'-1}}$ is uniformly Lipschitz in $y$, thus $[t_1,t_2]\times K\ni (t,x)\mapsto {\bf Q}_{t}u(x)$ is also  Lipschitz,  as the infimum of a family of uniformly Lipschitz functions. 
	
	(iii) The result of Ambrosio, Gigli and Savar\'e \cite[Theorem 3.5]{AGS} states that the Hamilton--Jacobi inequality \eqref{HJ-inequality} is valid for $p=2$, and for every $x\in X$ and $t\in (0,t_*(x));$ in particular, by (i) the inequality \eqref{HJ-inequality} is obtained for every $(t,x)\in (0,t_0)\times X$. The generic case $p\neq 2$ follows in a similar way as in \cite{AGS} after an almost trivial adaptation,   either from  Ambrosio, Gigli and Savar\'e \cite{AGS-2} or Gozlan,  Roberto and Samson \cite{GRS}.
	
	(iv) 
Let $x\in X$ be fixed, and consider a positive sequence $\lbrace t_n\rbrace_{n\in\mathbb{N}}$ with $t_n\to 0^+$ as $n\to \infty$; clearly, we may assume that $t_n<t_\#$ for every $n\in \mathbb N,$ where $t_\#<t_0$.  According to (i) (i.e., ${\bf Q}_{t}u(z)>-\infty$ for every $(t,z)\in (0,t_0)\times X$), there exist $y_n\in X$ $(n\in \mathbb N)$ and $y_\#\in X$  such that
	\begin{equation}\label{minimum-seq}
		{\bf Q}_{t_n}u(x)=u(y_n)+\frac{d^{p'}(x,y_n)}{p'\cdot t_n^{p'-1}}\ \ {\rm and}\ \  {\bf Q}_{t_\#}u(x)=u(y_\#)+\frac{d^{p'}(x,y_\#)}{p'\cdot t_\#^{p'-1}} .
	\end{equation}
	We are going to prove that the sequence $\{y_n\}$ is bounded. To see this, we have 
	\begin{eqnarray*}
	-\infty&<&	{\bf Q}_{t_\#}u(x)=u(y_\#)+\frac{d^{p'}(x,y_\#)}{p'\cdot t_\#^{p'-1}}\leq u(y_n)+\frac{d^{p'}(x,y_n)}{p'\cdot t_\#^{p'-1}}\\
	&=&u(y_n)+\frac{d^{p'}(x,y_n)}{p'\cdot t_n^{p'-1}}+\frac{d^{p'}(x,y_n)}{p'\cdot t_\#^{p'-1}}-\frac{d^{p'}(x,y_n)}{p'\cdot t_n^{p'-1}}={\bf Q}_{t_n}u(x)+\frac{d^{p'}(x,y_n)}{p'\cdot t_\#^{p'-1}}-\frac{d^{p'}(x,y_n)}{p'\cdot t_n^{p'-1}}\\&\leq & u(x)+ \frac{1}{p'}d^{p'}(x,y_n)\left\{\frac{1}{t_\#^{p'-1}}-\frac{1}{t_n^{p'-1}}\right\}.
	\end{eqnarray*}
If $d(x,y_n)\to \infty$ as $n\to \infty$, since $t_n<t_\#$ for every $n\in \mathbb N,$ the latter expression tends to $-\infty$, which contradicts the fact $-\infty<	{\bf Q}_{t_\#}u(x).$ Therefore, there exists $R>0$ such that $\{y_n\}\subset B(x_0,R)$ for every $n\in \mathbb R.$
	
	By \eqref{minimum-seq} and ${\bf Q}_{t_n}u\le u$, one has
	\begin{align*}
		\frac{d^{p'}(x,y_n)}{p'\cdot t_n^{p'-1}}\le u(x)-u(y_n),\ n\in \mathbb N. 
	\end{align*}
	Since $u\in  {\rm Lip}_{\rm loc}(X)$, it  turns out that $u$ is globally Lipschitz on the compact set $\overline {B(x_0,R+d(x_0,x))}$ and $x,y_n\in \overline {B(x_0,R+d(x_0,x))}$, $n\in \mathbb N;$ in particular, there exists $L>0$ such that   $u(x)-u(y_n)\le L\cdot d(x,y_n)$ for every $n\in \mathbb N$. Rearranging the previous inequality then yields that 
	\begin{align*}
		d(x,y_n)\le \big(p'\cdot L\big)^{1/p'-1}\cdot t_n,\, \ n\in \mathbb N,
	\end{align*}
	and since $t_n\to 0$, we also have that $d(x,y_n)\to 0$ as $n\to \infty$. By the definition of the slope $|Du|$,  for a fixed $\varepsilon>0$ there exists $N_\varepsilon\in\mathbb{N}$ such that
	\begin{align}\label{ineq with epsilon}
		\frac{u(y_n)-u(x)}{d(x,y_n)}\ge -\vert Du\vert(x)-\varepsilon\ ,\ \forall n\ge N_\varepsilon.
	\end{align}
	Therefore, by applying \eqref{minimum-seq} and \eqref{ineq with epsilon}, we have that
	\begin{eqnarray*}
		\frac{{\bf Q}_{t_n}u(x)-u(x)}{t_n}&=&\frac{u(y_n)-u(x)}{t_n}+\frac{d^{p'}(x,y_n)}{p'\cdot t_n^{p'}}\\&=&\frac{u(y_n)-u(x)}{d(x,y_n)}\cdot\frac{d(x,y_n)}{t_n}+\frac{1}{p'}\bigg(\frac{d(x,y_n,)}{t_n}\bigg)^{p'}\\&\geq &
		 -\big(\vert Du\vert(x)+\varepsilon\big)\cdot\frac{d(x,y_n)}{t_n}+\frac{1}{p'}\bigg(\frac{d(x,y_n,)}{t_n}\bigg)^{p'}\\&\ge&-\frac{\big(\vert Du\vert(x)+\varepsilon)^p}{p}\,,
	\end{eqnarray*}
		where  the last estimate  follows by  Young's inequality, written in the following form:
	\begin{align*}
		-As+\frac{1}{p'}\cdot s^{p'}\ge -\frac{A^p}{p}\ ,\ \forall A,s>0\,.
	\end{align*}
	Finally, since $t_n\to 0 $ as $n\to\infty$, combined with  the arbitrariness of $\varepsilon>0$, we obtain the desired inequality \eqref{ineq claim lemma 4.1}. 
\hfill $\square$

\subsection{Sobolev regularity of the Hopf--Lax semigroup}

In the sequel, besides the locally Lipschitz property of $(t,x)\mapsto {\bf Q}_tu(x)$  on $(0,t_0)\times X$ (cf. Proposition \ref{proposition-hyper}/(ii)), we prove an important regularity property of an integral  involving  $e^{{\bf Q}_tu} $. Before doing this, we state an elementary inequality (whose proof is left to the reader), which plays a crucial role in our argument; namely, for every  $x,y,z\in X$, $t,s>0$ and  $r>1$,  one has
\begin{equation}\label{metric-relation}
	\frac{d^r(x,y)}{t^{r-1}}+\frac{d^r(y,z)}{s^{r-1}}\geq \frac{d^r(x,z)}{(t+s)^{r-1}}. 
\end{equation}

\begin{proposition}\label{proposition-hyper-2}
		Let $p>1$, $t_0>0$, $x_0\in X$ and  $0<\alpha\leq \beta$ be fixed, and let $q:[0,\tilde t]\to [\alpha,\beta]$ be a $C^1$-function  on $(0,\tilde t)$ for some $\tilde t\in (0,t_0).$ 
		Then for every   $u\in \mathcal F_{t_0,x_0}(X)$ with $(1+d^{p'}(x_0,\cdot))e^{\alpha u}\in  L^1(X,{\sf m})$,  we have that

	\begin{itemize}
		\item[(i)] 
		$(1+d^{p'}(x_0,\cdot))e^{\beta u}\in  L^1(X,{\sf m});$
		\item[(ii)] the function 
		$$F(t)=\int_X  e^{q(t){\bf Q}_{t}u(x)}d{\sf m}(x)$$
		is well-defined and locally Lipschitz in $(0,\tilde t);$
		\item[(iii)] the function $x\mapsto e^{\frac{q(t)}{p}{\bf Q}_{t}u(x)}$ belongs to $ W^{ 1,p}(X,d,{\sf m})$ 
		for every 
		 $t\in (0,\tilde t)$.   
	\end{itemize}
\end{proposition}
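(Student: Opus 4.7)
I prove (i), (ii), (iii) in order, relying on three recurrent tools: the growth bound (A3), the Lipschitz regularity of $(t,x)\mapsto{\bf Q}_t u(x)$ (Proposition~\ref{proposition-hyper}(ii)), and the properness of ${\sf CD}(0,N)$-spaces, in which closed bounded sets are compact and of finite ${\sf m}$-measure. Let $B:=u^{-1}([0,\infty))$, bounded by (A1). For (i), split $X=B\sqcup(X\setminus B)$: on the compact closure $\overline B$, local Lipschitzness of $u$ and boundedness of $d(x_0,\cdot)$ make $(1+d^{p'}(x_0,\cdot))e^{\beta u}$ a bounded function on a set of finite ${\sf m}$-measure, while on $X\setminus B$ we have $u<0$ and $\alpha\le\beta$, so $e^{\beta u}\le e^{\alpha u}$ and integrability follows from the standing hypothesis.

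The heart of (ii) is a uniform two-sided control on each compact subinterval $[t_1,t_2]\subset(0,\tilde t)$ of the form
$$
M - K\, d^{p'}(x,x_0) \;\le\; {\bf Q}_t u(x) \;\le\; u(x), \qquad t\in[t_1,t_2],\ x\in X.
$$
The upper bound is trivial from the definition; for the lower bound I combine (A3) with the Young-type expansion $d^{p'}(x_0,y)\le(1+\varepsilon)d^{p'}(x,y)+C_\varepsilon d^{p'}(x,x_0)$, choosing $\varepsilon>0$ small enough that $(1+\varepsilon)/C_0<1/(p't_2^{p'-1})$. This is possible precisely because of the strict inequality $C_0>p't_0^{p'-1}$ in (A3), and it is \emph{the} key quantitative hypothesis of the whole argument: it produces the positive coefficient $1/(p't_2^{p'-1})-(1+\varepsilon)/C_0>0$ which absorbs the $d^{p'}(x,y)$ term appearing in the infimum that defines ${\bf Q}_t u(x)$. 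Combined with (i), the two-sided bound provides an ${\sf m}$-integrable majorant $g$ for $e^{q(t){\bf Q}_t u}$ uniform on $[t_1,t_2]$ (again after splitting by the sign of $u$), so $F(t)<\infty$.

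For the Lipschitz bound on $F$, I differentiate under the integral. By Proposition~\ref{proposition-hyper}(ii) and $q\in C^1$, the map $r\mapsto e^{q(r){\bf Q}_r u(x)}$ is absolutely continuous on $[t_1,t_2]$ for each $x$; the envelope identity $\partial_r{\bf Q}_r u(x)=-d^{p'}(x,y_r^*)/(pr^{p'})$ (with $y_r^*=y_r^*(x)$ any minimizer in the defining infimum, whose existence follows exactly as in the proof of Proposition~\ref{proposition-hyper}), together with $d^{p'}(x,y_r^*)=p'r^{p'-1}({\bf Q}_r u(x)-u(y_r^*))$ and the lower bound above, produces the pointwise estimate
$$
\left|\tfrac{d}{dr}e^{q(r){\bf Q}_r u(x)}\right|\le C\, g(x)\bigl(1+|u(x)|+d^{p'}(x,x_0)\bigr), \qquad r\in[t_1,t_2],
$$
whose right-hand side lies in $L^1({\sf m})$ by (i). A Fubini argument then yields $|F(t)-F(s)|\le L|t-s|$.

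Part (iii) follows quickly from the bounds already assembled. Set $v(x):=e^{q(t){\bf Q}_t u(x)/p}$, which is locally Lipschitz by composition, and note that $\|v\|_{L^p}^p=F(t)<\infty$ by (ii). The chain rule for the local Lipschitz constant gives $|Dv|=(q(t)/p)\,v\,|D{\bf Q}_t u|$; the slope bound $|D{\bf Q}_t u|^p(x)\le d^{p'}(x,y_t^*)/t^{p'}$ (immediate from the HJ inequality of Proposition~\ref{proposition-hyper}(iii) combined with the envelope identity above) together with the $y_t^*$-estimate used in (ii) shows $|Dv|^p\in L^1({\sf m})$. Therefore $v\in\mathcal W^{1,p}(X,d,{\sf m})\subset W^{1,p}(X,d,{\sf m})$ by \eqref{Cheeger-equality} and \eqref{Sobolev-second-definition}. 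I expect the main obstacle to lie precisely in the integrable domination of $\partial_r e^{q(r){\bf Q}_r u(x)}$ required for (ii): it simultaneously demands locating the Hopf--Lax minimizer and taming the exponential weight against the $d^{p'}(x_0,\cdot)$-growth permitted by (A3), and it is exactly the strict gap $C_0>p't_0^{p'-1}$ that makes the whole bookkeeping close.
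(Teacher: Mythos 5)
Your proposal is correct and, for parts (i) and (ii), follows essentially the paper's own path: the splitting of $X$ along $u^{-1}([0,\infty))$ in (i) is identical, your ``Young-type expansion'' of $d^{p'}(x_0,y)$ is exactly the elementary metric inequality \eqref{metric-relation} that the paper uses to get ${\bf Q}_tu(x)\ge M-d^{p'}(x_0,x)/b$, and your bound on $\partial_r{\bf Q}_ru(x)$ via minimizers is the same computation as \eqref{estimate-Q-negative}. The only difference in (ii) is packaging: you invoke a.e.\ differentiability of the locally Lipschitz map $r\mapsto e^{q(r){\bf Q}_ru(x)}$ plus the fundamental theorem of calculus and Fubini, whereas the paper uses Clarke generalized gradients and Lebourg's mean value theorem; both are legitimate, and both hinge on the same integrable majorant $C\max\{e^{\alpha u},e^{\beta u}\}(1+d^{p'}(x_0,\cdot))$. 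Part (iii) is where you genuinely diverge. You rely on the pointwise slope bound $|D{\bf Q}_tu|^p(x)\le D^+(x,t)^{p'}/t^{p'}$ (with $D^+(x,t)$ the distance to the farthest minimizer) and then dominate $|Dv|^p$ explicitly; note that this bound is \emph{not} immediate from the Hamilton--Jacobi inequality alone, since \eqref{HJ-inequality} only bounds $\frac{d^+}{dt}{\bf Q}_tu$ from \emph{above} by $-|D{\bf Q}_tu|^p/p$ -- you also need the matching lower bound $\frac{d^+}{dt}{\bf Q}_tu(x)\ge -D^+(x,t)^{p'}/(pt^{p'})$, which is a separate (standard) piece of the metric Hopf--Lax theory of Ambrosio--Gigli--Savar\'e and should be cited or proved. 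The paper instead argues softly: $H_\alpha$ and $H_\beta$ are locally Lipschitz (by (ii)) and non-increasing, hence have finite right derivatives at every $t$, and integrating the Hamilton--Jacobi inequality then forces $\int_X|D{\bf Q}_tu|^pe^{q(t){\bf Q}_tu}\,d{\sf m}<\infty$ without any pointwise slope estimate. Your route is more quantitative; the paper's avoids the extra Hopf--Lax lemma. Either way the conclusion $e^{q(t){\bf Q}_tu/p}\in W^{1,p}(X,d,{\sf m})$ follows from \eqref{Cheeger-equality} and \eqref{Sobolev-second-definition} as you say, so there is no genuine gap, only the one under-justified citation in (iii).
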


\begin{proof}
	(i) Due to assumption (A1), the set $S:=u^{-1}([0,\infty))\subset X$ is bounded, i.e., there exists $R>0$ such that $S\subset B(x_0,R)$. In particular, ${\sf m}(S)<+\infty$ (see Sturm \cite[Theorem 2.3]{Sturm-2}) and $U:=\sup_X u<+\infty$; thus  
	 $$\int_S (1+d^{p'}(x_0,x))e^{\beta u(x)}d{\sf m}(x)\leq (1+R^{p'}) e^{\beta U}{\sf m}(S)<+\infty.$$
	On the other hand, since $u(x)\leq 0$ for every $x\in X\setminus S$ and $(1+d^{p'}(x_0,\cdot))e^{\alpha u}\in  L^1(X,{\sf m})$, for every  $\beta\geq \alpha$ one has that
	 $$\int_{X\setminus S} (1+d^{p'}(x_0,x))e^{\beta u(x)}d{\sf m}(x)\leq \int_{X\setminus S} (1+d^{p'}(x_0,x))e^{\alpha u(x)}d{\sf m}(x)<+\infty.$$

(ii) We first show that $e^{q(t){\bf Q}_{t}u}\in L^1(X,{\sf m})$ for every $t\in (0,\tilde t)$, which implies the well-definiteness of the function $F$.  Indeed, by definition one has that 
	${\bf Q}_{t}u\leq u$ for every $t\in (0,\tilde t)$. Therefore, due to the fact that $q(t)\in [\alpha,\beta]$ for every $t\in [0,\tilde t]$, one has for every $(t,x)\in (0,\tilde t)\times X$ that 
	\begin{equation}\label{first-estimate}
		 e^{q(t){\bf Q}_{t}u(x)}\leq e^{\max\{\alpha {\bf Q}_{t}u(x),\beta {\bf Q}_{t}u(x)\}}=\max\{e^{\alpha {\bf Q}_{t}u(x)},e^{\beta {\bf Q}_{t}u(x)}\}\leq \max\{e^{\alpha u(x)},e^{\beta u(x)}\}.
	\end{equation}
Therefore, the assumption $(1+d^{p'}(x_0,\cdot))e^{\alpha u}\in  L^1(X,{\sf m})$, combined with property (i), yields that $e^{q(t){\bf Q}_{t}u}\in L^1(X,{\sf m})$ for every $t\in (0,\tilde t)$.

In order to prove that $F$ is locally Lipschitz on $(0,\tilde t)$, we are first going to show that there exists $C_1>0$ such that
\begin{equation}\label{Q-t-estimate}
	|{\bf Q}_{t}u(x)|\leq C_1(1+d^{p'}(x_0,x)),\ \ \forall (t,x)\in (0,\tilde t)\times X.
\end{equation}
On one hand, by assumption (A3) and inequality \eqref{metric-relation} (note that $C_0>p't_0^{p'-1}$ and $t_0>\tilde t$), we have for every $ (t,x)\in (0,\tilde t)\times X$ that
$$	{\bf Q}_{t}u(x)=\inf_{y\in X}\left\{u(y)+\frac{d(x,y)^{p'}}{p't^{p'-1}}\right\}\geq M+\inf_{y\in X}\left\{-\frac{d(x_0,y)^{p'}}{C_0}+\frac{d(x,y)^{p'}}{p't_0^{p'-1}}\right\}\geq M-\frac{d(x_0,x)^{p'}}{b},$$ 
where $b=\left(C_0^{p-1}-t_0(p')^{p-1}\right)^{p'-1}>0.$
 One the other hand, by definition we have ${\bf Q}_{t}u\leq u$ for every $t\in (0,\tilde t)$; therefore, combining the latter two estimates, for every $(t,x)\in (0,\tilde t)\times X$ one has 
 $$|{\bf Q}_{t}u(x)|\leq \max \left\{|u(x)|,|M|+\frac{d(x_0,x)^{p'}}{b}\right\}.$$
 Since $U:=\sup_X u<+\infty$ and by assumption (A3),  \eqref{Q-t-estimate} follows at once. 
 
 Now, let us fix $t_1,t_2\in  (0,\tilde t)$, say $t_1<t_2$. Recall from Proposition \ref{proposition-hyper}/(ii) that $\tau\mapsto {\bf Q}_{\tau}u(x)$ is locally Lipschitz.    We are going to show that there exists $C_2>0$ such that
 \begin{equation}\label{dQ-t-estimate}
 	\max\left\{|\xi|:\xi\in \partial^0{\bf Q}_{t}u(x) \right\}\leq C_2(1+d^{p'}(x_0,x)),\ \ \forall (t,x)\in (t_1,t_2)\times X,
 \end{equation}
where $\partial^0{\bf Q}_{t}u(x)$ stands for the Clarke generalized gradient of the locally Lipschitz function $\tau\mapsto {\bf Q}_{\tau}u(x)$ at the point $t\in (t_1,t_2)$, see Clarke \cite{Clarke}. In this particular setting, it turns out that 
\begin{equation}\label{generalized-gradient}
	|\xi|\leq \max\left\{{\bf Q}_{t}^0u(x)(t;-1),{\bf Q}_{t}^0u(x)(t;1)\right\},\ \ \forall \xi\in \partial^0{\bf Q}_{t}u(x),
\end{equation}
see Clarke \cite[pp. 25-27]{Clarke}, where $${\bf Q}_{t}^0u(x)(t;v)=\limsup_{\{s\to t; h\to 0^+\}}\frac{{\bf Q}_{s+hv}u(x)-{\bf Q}_{s}u(x)}{h},\ v\in \mathbb R,$$
denotes the Clarke generalized directional derivative of $\tau\mapsto {\bf Q}_{\tau}u(x)$ at $t\in (t_1,t_2)$ in direction $v\in \mathbb R.$ 
Note that since $t\mapsto {\bf Q}_{t}u$ is non-increasing, one has  
\begin{equation}
	{\bf Q}_{t}^0u(x)(t;1)\leq 0.
\end{equation}
In order to estimate the term ${\bf Q}_{t}^0u(x)(t;-1)$, for $s>0$ and $x\in X$ let $y_{s,x}\in X$ be such that
$${\bf Q}_{s}u(x)=u(y_{s,x})+\frac{d(x,y_{s,x})^{p'}}{p's^{p'-1}};$$
denote the set of such elements by $m(x,s)\subset X.$
Then, one has 
\begin{eqnarray}\label{estimate-Q-negative}
\nonumber	{\bf Q}_{t}^0u(x)(t;-1)&=&\limsup_{s\to t; h\to 0^+}\frac{{\bf Q}_{s-h}u(x)-{\bf Q}_{s}u(x)}{h}\\&\leq&\nonumber\limsup_{s\to t; h\to 0^+}\frac{u(y_{s,x})+\frac{d(x,y_{s,x})^{p'}}{p'(s-h)^{p'-1}}-\left(u(y_{s,x})+\frac{d(x,y_{s,x})^{p'}}{p's^{p'-1}}\right)}{h}\\&=&\nonumber\limsup_{s\to t; h\to 0^+}\frac{d(x,y_{s,x})^{p'}}{p'}\cdot\frac{s^{p'-1}-(s-h)^{p'-1}}{hs^{p'-1}(s-h)^{p'-1}} \\&\leq &\frac{1}{p t^{p'}}\sup_{\substack{s\in [t_1,t_2] \\ y_{s,x}\in m(s,x)}}{d(x,y_{s,x})^{p'}}.
\end{eqnarray}
It remains to estimate the last term in \eqref{estimate-Q-negative}. First, since $y_{s,x}\in m(s,x)$ we have for every $(s,x)\in [t_1,t_2]\times X$  that
$$u(y_{s,x})+\frac{d(x,y_{s,x})^{p'}}{p's^{p'-1}}={\bf Q}_{s}u(x)\leq u(x).$$
This relation together with assumption (A3) implies that for every $(s,x)\in [t_1,t_2]\times X$ one has
$$\frac{d(x,y_{s,x})^{p'}}{p't_2^{p'-1}}\leq \frac{d(x,y_{s,x})^{p'}}{p's^{p'-1}}\leq u(x)-u(y_{s,x})\leq u(x)-M+\frac{d(x_0,y_{s,x})^{p'}}{C_0}.$$
Moreover, as before, by the metric inequality \eqref{metric-relation} one has that 
$$\frac{d(x_0,y_{s,x})^{p'}}{C_0}\leq \frac{d(x,y_{s,x})^{p'}}{p't_0^{p'-1}}+\frac{d(x,x_0)^{p'}}{b},$$ where 
$b=\left(C_0^{p-1}-t_0(p')^{p-1}\right)^{p'-1}>0$. Combining the latter two inequalities yields
$$\frac{1}{p'}\left(\frac{1}{t_2^{p'-1}}-\frac{1}{t_0^{p'-1}}\right)d(x_0,y_{s,x})^{p'}\leq u(x)-M+\frac{d(x,x_0)^{p'}}{b}.$$
Since  $t_2<t_0$, the latter estimate with relations \eqref{generalized-gradient}-\eqref{estimate-Q-negative} provide the claimed inequality \eqref{dQ-t-estimate}. 

Now, let $t,s\in [t_1,t_2]\subset (0,\tilde t)$ be arbitrarily fixed (say $t<s$). According to Proposition \ref{proposition-hyper}/(ii), the function $\tau \mapsto e^{q(\tau){\bf Q}_{\tau}u(x)}$ is locally Lipschitz on $(0,t_0)$. By using Lebourg's mean value theorem for the latter function, see Clarke \cite[Theorem 2.3.7]{Clarke}, there exist  $\theta=\theta_x\in (t,s)$ and $\xi=\xi_x\in  \partial^0{\bf Q}_{\theta}u(x) $ such that 
$$e^{q(t){\bf Q}_{t}u(x)}-e^{q(s){\bf Q}_{s}u(x)}=e^{q(\theta){\bf Q}_{\theta}u(x)}(q'(\theta){\bf Q}_{\theta}u(x)+q(\theta)\xi)(t-s).$$
The latter relation together with estimates \eqref{first-estimate}-\eqref{dQ-t-estimate} yields the existence of a constant $C_3>0$ such that
\begin{eqnarray*}
|F(t)-F(s)|&\leq& \int_X\left|e^{q(t){\bf Q}_{t}u(x)}-e^{q(s){\bf Q}_{s}u(x)}\right|d{\sf m}(x)\\&\leq& C_3\int_X\max\{e^{\alpha u(x)},e^{\beta u(x)}\}(1+d^{p'}(x_0,x))d{\sf m}(x)\cdot |t-s|.
\end{eqnarray*}
Due to (i) and assumption $(1+d^{p'}(x_0,\cdot))e^{\alpha u}\in  L^1(X,{\sf m})$, we conclude by the latter estimate that $F$ is locally Lipschitz on $(0,t_0).$

(iii) The fact that $x\mapsto e^{\frac{q(t)}{p}{\bf Q}_{t}u(x)}$ belongs to $L^p(X,{\sf m})$ for every  $t\in (0,\tilde t)$   is equivalent to the well-definiteness of $F$ on $(0,\tilde t)$, see (ii). It remains to prove that $\left|\nabla \left(e^{\frac{q(t)}{p}{\bf Q}_{t}u}\right)\right|\in L^p(X,{\sf m})$ for every $t\in (0,\tilde t)$.  

According to (ii), 
  the functions 
	$$H_\alpha(t)=\int_Xe^{\alpha{\bf Q}_{t}u(x)}d{\sf m}(x)\ \ {\rm and}\ \  H_\beta(t)=\int_Xe^{\beta{\bf Q}_{t}u(x)}d{\sf m}(x)$$
	are locally Lipschitz on $(0,\tilde t)$ (consider $q(t)=\alpha$ and $q(t)=\beta$, respectively). 
Since both of these functions are non-increasing, the right derivatives of both functions $H_\alpha$ and $H_\beta$ exist at {\it every} point $t\in (0,\tilde t)$, i.e.,   $-\infty<\frac{d^+}{dt}H_\alpha(t)<\infty$ and $-\infty<\frac{d^+}{dt}H_\beta(t)<\infty$ for every $t\in (0,\tilde t)$. In addition, by the Hamilton--Jacobi inequality, see Proposition \ref{proposition-hyper}/(iii),   one has for every  $t\in (0,\tilde t)$ that
\begin{eqnarray}\label{HJ-slopes}
\nonumber	-\infty&<&\frac{d^+}{dt}H_\alpha(t)=\alpha\int_X\frac{d^+}{dt}{\bf Q}_{t}u(x) e^{\alpha{\bf Q}_{t}u(x)}d{\sf m}(x)\leq -\frac{\alpha}{p}\int_X|D {\bf Q}_{t}u|^p(x) e^{\alpha{\bf Q}_{t}u(x)}d{\sf m}(x).
\end{eqnarray}
Due to Proposition \ref{proposition-hyper}/(ii), one has that ${\bf Q}_{t}u\in {\rm Lip}_{\rm loc}(X)$ for every $t\in (0,t_0)$; in particular, by \eqref{Cheeger-equality} we have  $|\nabla {\bf Q}_{t}u|(x)= |D {\bf Q}_{t}u|(x)$ for every $t\in (0,t_0)$ and ${\sf m}$-a.e. $x\in X $. Therefore, the previous estimate implies that
	$$\int_X|\nabla {\bf Q}_{t}u|^p(x) e^{\alpha{\bf Q}_{t}u(x)}d{\sf m}(x)<+\infty, \ \ \forall t\in (0,\tilde t).$$
	In the same way, we also have 
	$$\int_X|\nabla  {\bf Q}_{t}u|^p(x) e^{\beta{\bf Q}_{t}u(x)}d{\sf m}(x)<+\infty, \ \ \forall t\in (0,\tilde t).$$
	By \eqref{first-estimate} and the above two relations,  we obtain that
	\begin{equation}\label{Sobolev-estimate}
			\int_X|\nabla  {\bf Q}_{t}u|^p(x) e^{q(t){\bf Q}_{t}u(x)}d{\sf m}(x)<+\infty, \ \ \forall t\in (0,\tilde t),
	\end{equation}
which is equivalent, by means of the non-smooth chain rule, to the fact that 
$$\int_X\left|\nabla  \left(e^{\frac{q(t)}{p}{\bf Q}_{t}u(x)}\right)\right|^p d{\sf m}(x)<+\infty, \ \ \forall t\in (0,\tilde t),$$
i.e., $e^{\frac{q(t)}{p}{\bf Q}_{t}u}\in  W^{ 1,p}(X,d,{\sf m})$ for every  $t\in (0,\tilde t)$,  
	which concludes the proof. 
\end{proof}

\subsection{Proof of the hypercontractivity estimate \eqref{hyperc-estimate}}\label{subsection-hypercontractivity}
In the case $\alpha=\beta$,  relation  \eqref{hyperc-estimate} turns out to be trivial; thus, let us fix  $\beta>\alpha>0$ and $u\in \mathcal F_{t_0,x_0}(X)$ with $(1+d^{p'}(x_0,\cdot))e^{\alpha u}\in  L^1(X,{\sf m})$ and  $e^\frac{\alpha u}{p}\in   W^{ 1,p}(X,d,{\sf m})$. Fix an arbitrary $\tilde t\in (0,t_0)$, and for every $t\in [0,\tilde t]$ consider the functions 
	\begin{align*}
		q(t)=\frac{\alpha\beta}{(\alpha-\beta)t/\tilde{t}+\beta}\ \ {\rm and}\ \ \tilde F(t):=F(t)^{1/q(t)}=\bigg(\int_Xe^{q(t){\bf Q}_tu }d {\sf m}\bigg)^{1/q(t)}=\Vert e^{{\bf Q}_tu}\Vert_{L^{q(t)}(X,{\sf m})}.
	\end{align*}
We observe  that $q(0)=\alpha$, $q(\tilde{t})=\beta$, and $q$ is of class $C^1$ in $(0,\tilde{t})$, with
$
	q'(t)=\frac{\alpha\beta}{[(\alpha-\beta)t/\tilde{t}+\beta]^2}\cdot\frac{\beta-\alpha}{\tilde{t}}>0\,;
$
thus, $t\mapsto q(t)$ is strictly increasing. 
Finally,  we consider the function 
\begin{align*}
	x\mapsto w_t(x):=\frac{e^{\frac{q(t)}{p}{\bf Q}_tu(x)  }}{\tilde F(t)^\frac{q(t)}{p}}.
\end{align*}
By Proposition \ref{proposition-hyper-2}/(iii), $w_t$ belongs to $W^{ 1,p}(X,d,{\sf m})$  for every 
$t\in (0,\tilde t)$ and 
\begin{align*}
	\int_Xw_t^p\,d {\sf m}=1\ ,\ \forall t\in (0,\tilde{t}). 
\end{align*}
According to Theorem \ref{log-Sobolev-main}, one has
\begin{equation}\label{LSI-applied-w_t}
	\int_Xw_t^p\log w_t^pd {\sf m}\le\frac{N}{p}\log\bigg(\mathcal{L}_{p,N}{\sf AVR}_ {\sf m}^{-\frac{p}{N}}\int_X\vert \nabla w_t\vert^p\,d {\sf m}\bigg)\,.
\end{equation}
By the non-smooth chain rule, we have 
\begin{equation}\label{chain-rule-D}
	\vert \nabla w_t(x)\vert=\frac{e^{{\frac{q(t)}{p}\bf Q}_tu(x) }}{\tilde F(t)^\frac{q(t)}{p}}\cdot\frac{q(t)}{p}\cdot\vert \nabla{\bf Q}_tu(x)\vert\,,
\end{equation}
and 
\begin{equation}\label{identity-transformed}
	\int_Xw_t^p\log w_t^pd {\sf m}=\frac{\mathcal{E}\big(e^{q(t){\bf Q}_tu }\big)}{\tilde F(t)^{q(t)}}-\log\big(\tilde F(t)^{q(t)}\big),
\end{equation}
where 
\begin{align*}
	\mathcal{E}\big(e^{q(t){\bf Q}_tu }\big)=\int_Xe^{q(t){\bf Q}_tu }\log\big(e^{q(t){\bf Q}_tu }\big)d {\sf m}=\int_Xe^{q(t){\bf Q}_tu }q(t){\bf Q}_tu \,d {\sf m}.
\end{align*}
By Proposition \ref{proposition-hyper-2}, it turns out that $\tilde F(t)=F(t)^{1/q(t)}$, and thus $\tilde F$ is a locally Lipschitz function on $(0,\tilde{t})$. In particular, its right derivative exists at every $t\in (0,\tilde t)$ and its expression is given by
\begin{align}\label{right-derivative}
\nonumber	\frac{d^+}{dt}\tilde F(t)=&\frac{d^+}{dt}e^{\log \tilde F(t)}=\frac{d^+}{dt}e^{\frac{1}{q(t)}\log\left(\displaystyle\int_Xe^{q(t){\bf Q}_tu}d {\sf m}\right)}\\=&\nonumber \tilde F(t)\left(-\frac{q'(t)}{q(t)^2}\log\left(\int_Xe^{q(t){\bf Q}_tu }d {\sf m}\right)+\frac{\displaystyle\int_Xe^{q(t){\bf Q}_tu }\left(q(t)\frac{d^+}{dt}{\bf Q}_tu +q'(t){\bf Q}_tu \right)d {\sf m}}{q(t)\cdot \tilde F(t)^{q(t)}}\right)\\=&\frac{\tilde F(t)}{q(t)^2}\bigg(-q'(t)\log\big(\tilde F(t)^{q(t)}\big)+\frac{q(t)^2}{\tilde F(t)^{q(t)}}\int_Xe^{q(t){\bf Q}_tu }\frac{d^+}{dt}{\bf Q}_tu\,d {\sf m}+\frac{q'(t)}{\tilde F(t)^{q(t)}}\mathcal{E}\big(e^{q(t){\bf Q}_tu }\big)\bigg).
\end{align}
Note that all three terms in the last parenthesis are well-defined for every $t\in (0,\tilde t)$. Indeed, the first term is well-defined due to Proposition \ref{proposition-hyper-2}/(ii), while the second and third terms are bounded from above (due to the monotonicity of $t\mapsto {\bf Q}_tu$ and the log-Sobolev inequality \eqref{LSI-applied-w_t} combined with the identity \eqref{identity-transformed}); if one of them were $-\infty$, this  would contradict the fact that $\frac{d^+}{dt}\tilde F(t)\in \mathbb R$, $t\in (0,\tilde t)$. 

By using the Hamilton--Jacobi inequality \eqref{HJ-inequality} together with  $|\nabla {\bf Q}_{t}u|(x)=|D {\bf Q}_{t}u|(x)$ for every $t\in (0,\tilde t)$ and ${\sf m}$-a.e. $x\in X $, see \eqref{Cheeger-equality}, we obtain for every $t\in (0,\tilde t)$ that 
\begin{align*}
	\frac{\frac{d^+}{dt}\tilde F(t)}{\tilde F(t)}\le\frac{q'(t)}{q(t)^2\tilde F(t)^{q(t)}}\bigg(\mathcal{E}\big(e^{q(t){\bf Q}_tu }\big)-\tilde F^{q(t)}\log\big(\tilde F(t)^{q(t)}\big)-\frac{q(t)^2}{q'(t)}\int_Xe^{q(t){\bf Q}_tu }\frac{\vert \nabla{\bf Q}_tu\vert^p}{p}\,d {\sf m}\bigg).
\end{align*}
Applying the elementary inequality $\log(ey)\leq y$ for every $y>0$, together with the log-Sobolev inequality \eqref{LSI-applied-w_t} and relations \eqref{chain-rule-D} and \eqref{identity-transformed}, 
 we obtain for every $t\in(0,\tilde{t})$ that 
\begin{align*}
		\frac{\frac{d^+}{dt}\tilde F(t)}{\tilde F(t)}\le&\frac{q'(t)}{q(t)^2}\bigg[\frac{N}{p}\log\bigg(\mathcal{L}_{p,N}{\sf AVR}_ {\sf m}^{-\frac{p}{N}}\int_X\vert \nabla w_t\vert^p\,d {\sf m}\bigg)-\frac{N}{p}\bigg(\frac{q(t)^2}{Nq'(t)\tilde F(t)^{q(t)}}\int_Xe^{q(t){\bf Q}_tu }{\vert \nabla{\bf Q}_tu\vert^p}\,d {\sf m}\bigg)\bigg]\\ \le&\frac{q'(t)}{q(t)^2}\cdot \frac{N}{p}\bigg[\log\bigg(\mathcal{L}_{p,N}{\sf AVR}_ {\sf m}^{-\frac{p}{N}}\frac{q(t)^p}{p^p\cdot \tilde F(t)^{q(t)}}\int_Xe^{q(t){\bf Q}_tu }\,\vert \nabla {\bf Q}_tu\vert^p\,d {\sf m}\bigg)\\&\quad\qquad\qquad-\log\bigg(\frac{e q(t)^2}{Nq'(t)\tilde F(t)^{q(t)}}\int_Xe^{q(t){\bf Q}_tu }{\vert \nabla{\bf Q}_tu\vert^p}\,d {\sf m}\bigg)\bigg]\\=&\frac{q'(t)}{q(t)^2}\cdot \frac{N}{p}\log\bigg(\frac{N\mathcal{L}_{p,N}{\sf AVR}_ {\sf m}^{-\frac{p}{N}}}{e p^p}\cdot q'(t)q(t)^{p-2}\bigg)\,.
\end{align*}
By integrating both sides on $(0,\tilde t)$, and taking into account that $t\mapsto \tilde F(t)$ is absolutely continuous, we obtain the inequality
\begin{align}\label{hyperc-estimate with F}
\tilde	F(\tilde{t})\le \tilde F(0)\cdot\bigg(\frac{\beta-\alpha}{\tilde{t}}\bigg)^{\frac{N}{p}\cdot\frac{\beta-\alpha}{\alpha\beta}}\cdot C_{\alpha,\beta,N,p, {\sf m}},
\end{align}
where
\begin{equation}\label{C-best-constant-hypercontractivity}
	C_{\alpha,\beta,p,N, {\sf m}}=\frac{\alpha^{\frac{N}{\alpha\beta}(\frac{\alpha}{p}+\frac{\beta}{p'})}}{\beta^{\frac{N}{\alpha\beta}(\frac{\beta}{p}+\frac{\alpha}{p'})}}\left({\sf AVR}_ {\sf m}\sigma_N (p')^\frac{N}{p'}\Gamma\left(\frac{N}{p'}+1\right)\right)^{\frac{\alpha-\beta}{\alpha\beta}},
\end{equation}
which concludes the proof of \eqref{hyperc-estimate}. \hfill$\square$

\subsection{Sharpness in the hypercontractivity estimate \eqref{hyperc-estimate}} 

  We assume by contradiction that one can improve the constant $\mathcal
L_{p,N}{\sf AVR}_ {\sf m}^{-\frac{p}{N}}\frac{Ne^{p-1}}{p^p}$ in \eqref{hyperc-estimate}, i.e., for convenience of computations,  there exists $\mathcal C<\mathcal
L_{p,N}{\sf AVR}_ {\sf m}^{-\frac{p}{N}}$ such that for every  $\beta>\alpha>0$,  $ t\in (0,t_0)$ and  $u\in \mathcal F_{t_0,x_0}(X)$ with $(1+d^{p'}(x_0,\cdot))e^{\alpha u}\in  L^1(X,{\sf m})$ and $e^\frac{\alpha u}{p}\in   W^{ 1,p}(X,d,{\sf m})$,  one has
\begin{equation}\label{hypercontractivity-contradiction}
	\|e^{{\bf Q}_{t}u}\|_{L^{\beta}(X,{\sf m})}\leq \|e^{u}\|_{L^{\alpha}(X,{\sf m})}\left(\frac{\beta-\alpha}{ t}\right)^{\frac{N}{p}\frac{\beta-\alpha}{\alpha\beta}}\frac{\alpha^{\frac{N}{\alpha\beta}(\frac{\alpha}{p}+\frac{\beta}{p'})}}{\beta^{\frac{N}{\alpha\beta}(\frac{\beta}{p}+\frac{\alpha}{p'})}}\left(\mathcal C\frac{Ne^{p-1}}{p^p}\right)^{\frac{N}{p}\frac{\beta-\alpha}{\alpha\beta}}.
\end{equation}

Fix  a function $u:X\to \mathbb R$ with the above properties, a number $\alpha>0$  and the function $\beta:=\beta(t)=\alpha+y t$ with $t>0$, where $y>0$ will be specified later. With these choices, inequality  \eqref{hypercontractivity-contradiction} can be equivalently written into the form 
\begin{equation}\label{limit-before}
		\frac{\log\Vert e^{{\bf Q}_tu}\Vert_{L^{\beta(t)}(X,{\sf m})}-\log\Vert e^{u}\Vert_{L^{\alpha}(X,{\sf m})}}{t}\le \frac{1}{t}\log\bigg[\bigg({y\mathcal C}\frac{Ne^{p-1}}{p^p}\bigg)^{\frac{Nyt}{p\alpha\beta( t)}}\cdot \frac{\alpha^{\frac{N}{\alpha\beta(t)}(\frac{\alpha}{p}+\frac{\beta(t)}{p'})}}{\beta(t)^{\frac{N}{\alpha\beta(t)}(\frac{\beta(t)}{p}+\frac{\alpha}{p'})}}\bigg],
\end{equation}
for every $t\in (0,t_0)$. If we introduce 
$$
\overline F(t):=\bigg(\int_Xe^{\beta(t){\bf Q}_tu }d {\sf m}\bigg)^{1/\beta(t)}=\Vert e^{{\bf Q}_tu}\Vert_{L^{\beta(t)}(X,{\sf m})},\ t\in [0,t_0),
$$
and by taking the limit $t\to 0^+$ in \eqref{limit-before}, we obtain that 
\begin{equation}\label{limit-before-2}
	\frac{\frac{d^+}{dt}\overline F(t)}{\overline F(t)}\bigg|_{t=0}\le \lim_{t\to 0^+}\log\left[\bigg({y\mathcal C}\frac{Ne^{p-1}}{p^p}\bigg)^{\frac{Ny}{p\alpha\beta( t)}}\cdot \left(\frac{\alpha^{\frac{N}{\alpha\beta(t)}(\frac{\alpha}{p}+\frac{\beta(t)}{p'})}}{\beta(t)^{\frac{N}{\alpha\beta(t)}(\frac{\beta(t)}{p}+\frac{\alpha}{p'})}}\right)^\frac{1}{t}\right].
\end{equation}
A similar computation  as in \eqref{right-derivative} and  Proposition \ref{proposition-hyper}/(iv), show  that  
\begin{eqnarray}
\nonumber	\frac{\frac{d^+}{dt}\overline F(t)}{\overline F(t)}\bigg|_{t=0}&=&\frac{1}{\beta(t)^2}\bigg(-\beta'(t)\log\big(\overline F(t)^{\beta(t)}\big)+\frac{\beta(t)^2}{\overline F(t)^{\beta(t)}}\int_Xe^{\beta(t){\bf Q}_tu }\frac{d^+}{dt}{\bf Q}_tu\,d {\sf m}\\&&\nonumber+\frac{\beta'(t)}{\overline F(t)^{\beta(t)}}\mathcal{E}\big(e^{\beta(t){\bf Q}_tu }\big)\bigg)\bigg|_{t=0}\\&\geq &\nonumber \frac{y}{\alpha^2\|e^u\|^\alpha_{L^{\alpha}(X,{\sf m})}}\left(\mathcal{E}\big(e^{\alpha u}\big)-\|e^u\|^\alpha_{L^{\alpha}(X,{\sf m})}\log (\|e^u\|^\alpha_{L^{\alpha}(X,{\sf m})})-\frac{\alpha^2}{py}\int_Xe^{\alpha u}|Du|^pd {\sf m}\right).
\end{eqnarray}
The latter estimate and the limit 
$$\lim_{t\to 0^+}\left(\frac{\alpha^{\frac{N}{\alpha\beta(t)}(\frac{\alpha}{p}+\frac{\beta(t)}{p'})}}{\beta(t)^{\frac{N}{\alpha\beta(t)}(\frac{\beta(t)}{p}+\frac{\alpha}{p'})}}\right)^\frac{1}{t}=e^\frac{Ny((p-2)\log \alpha -p)}{\alpha^2 p}$$ transform inequality \eqref{limit-before-2} into 
$$\frac{\mathcal{E}\big(e^{\alpha u}\big)}{\|e^u\|^\alpha_{L^{\alpha}(X,{\sf m})}}-\log (\|e^u\|^\alpha_{L^{\alpha}(X,{\sf m})})\leq \frac{\alpha^2}{py}\frac{\displaystyle \int_Xe^{\alpha u}|\nabla u|^pd {\sf m}}{\|e^u\|^\alpha_{L^{\alpha}(X,{\sf m})}}+\frac{N}{p}\left(\log\left({y\mathcal C}\frac{Ne^{p-1}}{p^p}\right)+(p-2)\log \alpha -p\right);$$
here we also used relation \eqref{Cheeger-equality}, i.e., $|Du|=|\nabla u|$ ${\sf m}$-a.e., based on  the fact that $u\in \mathcal F_{t_0,x_0}(X)\subset {\rm Lip}_{\rm loc}(X)$. 
By assumption, $e^\frac{\alpha u}{p}\in   W^{ 1,p}(X,d,{\sf m})$, thus  
$$w:=\frac{e^\frac{\alpha u}{p}}{\|e^u\|^\frac{\alpha}{p}_{L^{\alpha}(X,{\sf m})}}\in  W^{ 1,p}(X,d,{\sf m})\ \ {\rm and}\ \ \displaystyle\int_X w^{p}d{\sf m}=1. $$
 With this notation, the latter inequality can be equivalently written into the form
\begin{equation}\label{almost-log-SOb}
	\int_Xw^p \log w^p\leq \frac{\alpha^2}{py}\left(\frac{p}{\alpha}\right)^p \int_X|\nabla w|^pd {\sf m}+\frac{N}{p}\left(\log\left({y\mathcal C}\frac{Ne^{p-1}}{p^p}\right)+(p-2)\log \alpha -p\right).
\end{equation}
Minimizing the right hand side of \eqref{almost-log-SOb} in $y>0$, its optimal value  is
$y=\frac{\alpha^2}{N}\left(\frac{p}{\alpha}\right)^p \displaystyle\int_X|\nabla w|^pd {\sf m}.$
Replacing this value into \eqref{almost-log-SOb}, it follows that
$$\int_Xw^p \log w^p\leq  \frac{N}{p}\log\left({\mathcal C}\int_X|\nabla w|^pd {\sf m}\right).$$
Due to the sharpness in Theorem \ref{log-Sobolev-main}, see \eqref{C-estimate}, we have that 
$
\mathcal C \geq  \mathcal L_{p, N}{\sf AVR}_ {\sf m}^{-\frac{p}{N}}$, which contradicts our initial assumption $\mathcal C<\mathcal
L_{p,N}{\sf AVR}_ {\sf m}^{-\frac{p}{N}}$. \hfill $\square$

 \section{Gaussian log-Sobolev inequality in ${\sf RCD}(0,N)$ spaces: proof of Theorem \ref{Gaussian-theorem}}\label{Gaussian-section}
 
 In this section we are going to prove Theorem \ref{Gaussian-theorem}; to do this, we need some basic properties of ${\sf RCD}(0,N)$ spaces that are presented in the subsequent subsection. 
 
 \subsection{Properties of ${\sf RCD}(0,N)$ spaces}

 A  metric measure space $(X,d, {\sf m})$  satisfies the Riemannian curvature-dimension condition ${\sf RCD}(0,N)$ for $N>1$, if it is a ${\sf CD}(0,N)$ space and it is infinitesimally Hilbertian, i.e., the Banach space $W^{1,2}(X, d,{\sf m})$ is Hilbertian. Due to Cavalletti and Milman \cite{Cavalletti-MIlman}, this definition of ${\sf RCD}(0,N)$ is equivalent to the previously introduced notions of ${\sf RCD}^{e}(0,N)$ by  Erbar, Kuwada and Sturm \cite{EKS} and of  ${\sf RCD}^{\star}(0,N)$ by Ambrosio, Mondino and Savar\'e \cite{AMondinoSavare}. The first form of the Riemannian curvature-dimension condition was introduced by Ambrosio, Gigli and Savar\'e \cite{AGS-Duke}, with no upper bound on the dimension. Typical examples of ${\sf RCD}(0,N)$ spaces  include measured Gromov--Hausdorff limit spaces of Riemannian manifolds with non-negative Ricci curvature. 
 
 In the sequel, we fix  a  metric measure space $(X,d, {\sf m})$  satisfying  ${\sf RCD}(0,N)$ for some $N>1$. 
 The infinitesimal Hilbertianity of $(X,d, {\sf m})$ can be characterized  by its 2-infinitesimal strict convexity and the symmetry of the map $$(f,g)\mapsto Df(\nabla g)=D^\pm f(\nabla g),$$
  i.e., 
 \begin{equation}\label{symmetry}
 	Df(\nabla g)=Dg(\nabla f),
 \end{equation} 
 where 
 $$D^+f(\nabla g)=\inf_{\varepsilon>0}\frac{|\nabla(g+\varepsilon f)|^2-|\nabla g|^2}{2\varepsilon}\ \ {\rm and}\ \ D^-f(\nabla g)=\sup_{\varepsilon<0}\frac{|\nabla(g+\varepsilon f)|^2-|\nabla g|^2}{2\varepsilon}$$
 for every $f,g:X\to \mathbb R$ with finite 2-Cheeger energy,
see Gigli \cite[\S 4.3]{Gigli1}. 
 Note that in particular, for every $f:X\to \mathbb R$ with finite 2-Cheeger energy, one has 
  \begin{equation}\label{inner-norm}
 	Df(\nabla f)=|\nabla f|^2.
 \end{equation}
According to \eqref{symmetry}, we shall use the formal  notation $\nabla f\cdot \nabla g$ for the object $Df(\nabla g)$; as a consequence, the linearity  of the differential operator $D$, combined with the symmetry property \eqref{symmetry}, implies the bilinearity of $(f,g)\mapsto \nabla f\cdot \nabla g$. In addition, we also have the Leibnitz-type rule
\begin{equation}\label{Leibnitz-rule}
	\nabla (f_1f_2)\cdot \nabla g= f_1	\nabla f_2\cdot \nabla g + f_2	\nabla f_1\cdot \nabla g\ \ \  {\sf m}{\rm -a.e.}, 
\end{equation}
for every $f_1,f_2,g\in L_{\rm loc}^\infty(X)$ having locally finite 2-Cheeger energies. If $f_1,f_2:X\to \mathbb R$ are two such functions, as a simple consequence of relations \eqref{symmetry}-\eqref{Leibnitz-rule} we have that
\begin{equation}\label{multiplication-norm}
|\nabla (f_1 f_2)|^2=f_1^2|\nabla f_2|^2 + 2f_1f_2\nabla f_1\cdot \nabla f_2+ f_2^2|\nabla f_1|^2.
\end{equation}
 
 
 It turns out that in the framework of ${\sf RCD}(0, N)$ spaces, a second order calculus related to the Laplacian can be developed. Let us recall that the \textit{Laplacian} $\Delta\colon D(\Delta)\to L^2(X, {\sf m})$ is a densely defined linear operator whose domain $D(\Delta)$ consists of all functions $f\in W^{1,2}(X,d, {\sf m})$ satisfying
 \begin{align*}
 	\int_Xh  g\ d {\sf m}=-\int_X\nabla h\cdot\nabla f\ d {\sf m}\ \text{ for any }h\in W^{1,2}(X,d, {\sf m})\,,
 \end{align*}
 and the unique $g\in L^{2}(X, {\sf m})$ that satisfies this property is denoted by $\Delta f$. 
 More generally, we say that $f\in W^{1,2}_{\mathrm{loc}}(X,d, {\sf m})$ is in the domain of the \textit{measure-valued Laplacian}, and we write $f\in D(\mathbf{\Delta})$, if there exists a Radon measure $\nu$ on $X$ such that
 \begin{align*}
 	\int_X\psi\ d\nu=-\int_X\nabla f\cdot\nabla\psi\ d {\sf m}\ \text{ for any }\psi\in\mathrm{Lip}_c(X)\,.
 \end{align*}
 In this case, we write $\mathbf{\Delta}f:=\nu$. If moreover $\mathbf{\Delta}f\ll {\sf m}$ with $L^2_{\mathrm{loc}}$-density, we denote by $\Delta f$ the unique function in $L^2_{\mathrm{loc}}(M,d, {\sf m})$ such that $\mathbf{\Delta}f=\Delta f {\sf m}$.
 
 Using this notation, one has the following \textit{sharp Laplacian comparison} estimate for the distance function on an arbitrary ${\sf RCD}(0,N)$ space  $(X,d, {\sf m})$, see e.g. Gigli \cite[Corollary 5.15]{Gigli1}:   if we consider the distance function $d_{x_0}\colon X\to [0,\infty)$ with $d_{x_0}(x):=d(x_0,x)$ for some $x_0\in X$, we have that
\begin{equation}\label{Lap-comparison}
	\frac{d_{x_0}^2}{2}\in D(\mathbf{\Delta})\ \text{ and }\ \mathbf{\Delta}\frac{d_{x_0}^2}{2}\le N {\sf m} .
\end{equation}

After this preparation we are ready to present the proof of Theorem \ref{Gaussian-theorem}.

 \subsection{Proof of the Gaussian log-Sobolev inequality \eqref{Gaussian-log-Sobolev}}
 
 Let  $x_0\in X$ be fixed. We recall from the Introduction the ${\sf m}$-{Gaussian measure} 
\begin{equation}\label{mGx0-measure}
	d{\sf m}_{G,x_0}(x):=G^{-1}e^{-\frac{d^2(x_0,x)}{2}}d{\sf m}(x),
\end{equation}
where
$$ \displaystyle G=\int_Xe^{-\frac{d^2(x_0,x)}{2}}d {\sf m}(x)>0.$$
We first notice that $G<+\infty$; indeed, by the layer cake representation 	one has
\begin{align}\label{C const}
	G=\int_X e^{-\frac{d^2(x_0,x)}{2}}d {\sf m}(x)=\int_0^\infty  {\sf m}\big(B(x_0,\rho)\big)\rho e^{-\frac{\rho^2}{2}}d\rho,
\end{align}
thus, by the Bishop--Gromov inequality,
if $\theta_ {\sf m}(x_0)=+\infty$, then 
$$G\leq \int_0^1  {\sf m}\big(B(x_0,\rho)\big)\rho e^{-\frac{\rho^2}{2}}d\rho+{\sf m}\big(B(x_0,1))\int_1^\infty  \rho^{N+1} e^{-\frac{\rho^2}{2}}d\rho<+\infty,$$
while  if $\theta_ {\sf m}(x_0)<+\infty$, then  
\begin{equation}\label{estimate-f-finite}
	G\leq  \theta_ {\sf m}(x_0)\sigma_N \int_0^\infty  \rho^{N+1} e^{-\frac{\rho^2}{2}}d\rho=\theta_ {\sf m}(x_0)\sigma_N2^\frac{N}{2}\Gamma(\frac{N}{2}+1)
<+\infty. 
\end{equation}
 If $\theta_{\sf m}(x_0)=+\infty$, inequality \eqref{Gaussian-log-Sobolev} trivially holds; therefore, we may assume that $\theta_{\sf m}(x_0)<+\infty$.
%
%

Fix $u\in W^{1,2}(X,d,{\sf m}_{G,x_0})$ such that $$\displaystyle \int_Xu^2 d {\sf m}_{G,x_0}=1,$$ and let us consider the function $$v(x):=u(x)\cdot\sqrt{ \rho_{G,x_0}(x)},\ x\in X,$$ 
where $$\rho_{G,x_0}(x)=G^{-1}e^{-\frac{d^2_{x_0}(x)}{2}}$$ is the density of $d{\sf m}_{G,x_0}$ with respect to $d{\sf m}$. By definition of $v$, we can write 
\begin{equation}\label{v-normalized}
	\int_Xv^2d {\sf m}=\int_Xu^2\rho_{G,x_0} d {\sf m}=\int_Xu^2 d{\sf m}_{G,x_0}=1.
\end{equation}
We claim that $v\in  W^{1,2}(X,d,{\sf m})$. Indeed, by relation \eqref{multiplication-norm}, the eikonal equation \eqref{eikonal}, and the non-smooth chain rule, we have ${\sf m}$-a.e. that 
\begin{eqnarray}\label{estimate-gauss-1}
\nonumber	\vert\nabla v\vert^2&=&\vert\nabla (u\sqrt{\rho_{G,x_0}})\vert^2=\frac{1}{4}u^2d_{x_0}^2\rho_{G,x_0}\vert\nabla d_{x_0}\vert^2-\rho_{G,x_0} u \nabla u\cdot \nabla \left(\frac{d_{x_0}^2}{2}\right)+\rho_{G,x_0}\vert\nabla u\vert^2\\&= &\nonumber\frac{1}{4}u^2d_{x_0}^2\rho_{G,x_0}-  \nabla \left(\frac{u^2}{2}\rho_{G,x_0}\right)\cdot \nabla \left(\frac{d_{x_0}^2}{2}\right)+ \frac{u^2}{2}\nabla \rho_{G,x_0}\cdot \nabla \left(\frac{d_{x_0}^2}{2}\right)+\vert\nabla u\vert^2\rho_{G,x_0}\\&= &-\frac{1}{4}u^2d_{x_0}^2\rho_{G,x_0}-  \nabla \left(\frac{u^2}{2}\rho_{G,x_0}\right)\cdot \nabla \left(\frac{d_{x_0}^2}{2}\right)+\vert\nabla u\vert^2\rho_{G,x_0}. 
\end{eqnarray}
Due to the Laplacian comparison property \eqref{Lap-comparison}, one has that 
\begin{equation}\label{Lap-comp-applied}
	-\int_X\nabla \left(\frac{u^2}{2}\rho_{G,x_0}\right)\cdot \nabla \left(\frac{d_{x_0}^2}{2}\right)d{\sf m}=\int_X \frac{u^2}{2}\rho_{G,x_0} \Delta \left(\frac{d_{x_0}^2}{2}\right)d{\sf m}\leq \frac{N}{2}\int_X {u^2}\rho_{G,x_0}d{\sf m}=\frac{N}{2}.
\end{equation}
Integrating  \eqref{estimate-gauss-1} with respect to $d{\sf m}$ and given this final estimate, together with the fact that 
$u\in W^{1,2}(X,d,{\sf m}_{G,x_0})$ (and hence $\displaystyle \int_X\vert\nabla u\vert^2d {\sf m}_{G,x_0}<\infty$),  we obtain that 
$$\displaystyle \int_X\vert\nabla v\vert^2d {\sf m}<\infty.$$
Using this fact and \eqref{v-normalized}, we obtain the claim, i.e.,  $v\in  W^{1,2}(X,d,{\sf m})$. 

Furthermore,  integrating again \eqref{estimate-gauss-1} and using the above observations, we also see that $$\int_Xu^2d_{x_0}^2\rho_{G,x_0}d{\sf m}=\int_Xu^2d_{x_0}^2d{\sf m}_{G,x_0}<\infty.$$
Summing up the above estimates, we obtain that
\begin{equation}\label{ell-ell}
	\int_X\vert\nabla v\vert^2d {\sf m}\le \int_X\vert \nabla u\vert^2d {\sf m}_{G,x_0} +\frac{N}{2}-\frac{1}{4}\int_Xu^2d^2 d {\sf m}_{G,x_0}\,,
\end{equation}
and since $v\in  W^{1,2}(X,d,{\sf m})$ verifies \eqref{v-normalized}, by the 
$L^2$-log-Sobolev inequality \eqref{LSI} one has 
\begin{align}\label{ineq on v}
	\int_X v^2\log v^2 d {\sf m}\le \frac{N}{2}\log\bigg(\frac{2}{ eN}\left(\sigma_N\Gamma(\frac{N}{2}+1){\sf AVR}_ {\sf m}\right)^{-\frac{2}{N}}\int_X\vert\nabla v\vert^2d {\sf m}\bigg). 
\end{align}
Writing  the left-hand side of \eqref{ineq on v} in terms of $u$, we obtain
	\begin{eqnarray}\label{expanding}
	\nonumber	\int_Xv^2\log v^2d {\sf m}&=&\int_X u^2\log(u^2 \rho_{G,x_0}) \rho_{G,x_0}d {\sf m}\\&=&\int_Xu^2\log u^2 d {\sf m}_{G,x_0}-\log G-\frac{1}{2}\int_Xu^2{d_{x_0}^2}d {\sf m}_{G,x_0}.
	\end{eqnarray}
	If we apply the elementary inequality $\log (ey)\leq y$, which holds for every $y>0$,  the right-hand side of \eqref{ineq on v} can be estimated as 
	\begin{align*}
		\frac{N}{2}\log\bigg(&\frac{1}{2 e^2}\left(\sigma_N\Gamma\big(\frac{N}{2}+1\right){\sf AVR}_ {\sf m}\big)^{-\frac{2}{N}}\bigg)+\frac{N}{2}\log\bigg(\frac{4e}{N}\int_X\vert\nabla v\vert^2d {\sf m}\bigg)\le \\&\frac{N}{2}\log\bigg(\frac{1}{2 e^2}\left(\sigma_N\Gamma\big(\frac{N}{2}+1\right){\sf AVR}_ {\sf m}\big)^{-\frac{2}{N}}\bigg)+2\int_X\vert\nabla v\vert^2d {\sf m}.
	\end{align*}
The latter estimate together with relations  \eqref{ell-ell}-\eqref{expanding} and \eqref{estimate-f-finite} imply  \eqref{Gaussian-log-Sobolev}, i.e., 
	$$
	\int_Xu^2\log u^2 d {\sf m}_{G,x_0}\le 2\int_X\vert\nabla u\vert^2 d {\sf m}_{G,x_0}+\log\frac{\theta_{\sf m}(x_0)}{{\sf AVR}_ {\sf m}}.
$$

\begin{remark}\rm 
	 As we pointed out in the introduction, the  Gaussian log-Sobolev inequality \eqref{curved-Gross-inequality} holds with the measure $d \gamma_V = e^{-V} d{\rm vol}$ whenever $V$ verifies a strong convexity assumption, see   Cordero-Erausquin, McCann and Schmuckenschl\"{a}ger \cite{CEMS2}. Note that the Gaussian log-Sobolev inequality \eqref{Gaussian-log-Sobolev} cannot be obtained via the Pr\'ekopa--Leindler inequality, as the function $V(x)=\frac{1}{2}d^2(x_0,x)$ does not provide any reasonable convexity property (even in the setting of Riemannian manifolds with nonnegative Ricci curvature).\ Such convexity property occurs more naturally on Cartan-Hadamard manifolds, the threshold geometric objects being the Euclidean spaces.  However, in the setting of ${\sf RCD}(0,N)$ spaces the additive term log$\frac{\theta_{\sf m}(x_0)}{{\sf AVR}_ {\sf m}}$ balances the lack of such convexity.  
\end{remark}

\subsection{Sharpness of the constant 2 in the Gaussian log-Sobolev inequality \eqref{Gaussian-log-Sobolev}}

We are going to prove that the constant 2 in \eqref{Gaussian-log-Sobolev}  is sharp  (whenever we assume  that $\theta_{\sf m}(x_0)<+\infty$).
By contradiction, let us assume that there exists $C\in (0,2)$ such that, for every $\displaystyle \int_Xu^2 d {\sf m}_{G,x_0}=1$, one has 
\begin{equation}\label{C-contradiction}
	\int_Xu^2\log u^2 d {\sf m}_{G,x_0}\le C\int_X\vert\nabla u\vert^2 d {\sf m}_{G,x_0}+\log\frac{\theta_{\sf m}(x_0)}{{\sf AVR}_ {\sf m}}.
\end{equation}
For every $\lambda>0$, let	$$
f(\lambda):=\displaystyle\int_X e^{-{\lambda}
	{{d}}(x_0,x)^{2}}d {\sf m}(x).
$$ 
Recalling that 
$$ \displaystyle G=\int_Xe^{-\frac{d^2(x_0,x)}{2}}d {\sf m}(x)>0,$$
consider the function  $$u_\lambda(x)=
\left(\frac{G}{f(\lambda)}\right)^\frac{1}{2}e^{(\frac{1}{4}-\frac{\lambda}{2})d(x_0,x)^2},\ \ x\in X.$$
One can easily observe that
$\displaystyle \int_Xu_\lambda^2 d {\sf m}_{G,x_0}=1$. Thus, we can use $u_\lambda$ in \eqref{C-contradiction}.  By applying the eikonal equation \eqref{eikonal}, a straightforward reorganization of the terms provide the inequality 
\begin{equation}\label{C-inequality}
	-\log f(\lambda)\leq \frac{1}{f(\lambda)} \left(\frac{1}{2}-\lambda\right)\left(\frac{C}{2}-1-C\lambda\right)\displaystyle\int_X e^{-{\lambda}
		{{d}}(x_0,x)^{2}}{{d}}(x_0,x)^{2}d {\sf m}(x)+\log\frac{\theta_{\sf m}(x_0)}{G\cdot{\sf AVR}_ {\sf m}},\ \ \forall \lambda>0.
\end{equation}  
By the layer cake representation one has that 
\begin{eqnarray}\label{first-estimate-0}
	\nonumber \lim_{\lambda\to 0^+}\lambda^\frac{N}{2}f(\lambda)&=&2\lim_{\lambda\to 0^+}\lambda^{\frac{N}{2}+1}\int_0^\infty {\sf m}(B(x_0,\rho))\rho e^{-\lambda \rho^2}d\rho\\&=&\nonumber \lim_{\lambda\to 0^+}\lambda^{\frac{N}{2}}\int_0^\infty {\sf m}(B(x_0,\sqrt{t/\lambda})) e^{-t}dt\ \ \ \quad [{\rm change\ of\ var.} \ \rho=\sqrt{t/\lambda}]\\&=& 
	\nonumber \sigma_N\lim_{\lambda\to 0^+}\int_0^\infty \frac{{\sf m}(B(x_0,\sqrt{t/\lambda}))}{\sigma_N(\sqrt{t/\lambda})^N} t^{\frac{N}{2}}e^{-t}dt\\&=& \sigma_N{\sf AVR}_ {\sf m}\Gamma\left(\frac{N}{2}+1\right)=
	\pi^\frac{N}{2}{\sf AVR}_ {\sf m},
\end{eqnarray}
where we also used the monotone convergence theorem (together with the Bishop--Gromov comparison principle) and the definition of ${\sf AVR}_ {\sf m}.$

In a similar way, one has that
$$\displaystyle\int_X e^{-{\lambda}
	{{d}}(x_0,x)^{2}}{{d}}(x_0,x)^{2}d {\sf m}(x)=2\int_0^\infty {\sf m}(B(x_0,\rho))\rho (\lambda\rho^2-1) e^{-\lambda \rho^2}d\rho,$$
and 
\begin{equation}\label{second-estimate}
	\lim_{\lambda\to 0^+}\lambda^{\frac{N}{2}+1}\displaystyle\int_X e^{-{\lambda}
		{{d}}(x_0,x)^{2}}{{d}}(x_0,x)^{2}d {\sf m}(x)=\frac{N}{2}\pi^\frac{N}{2}{\sf AVR}_ {\sf m}.
\end{equation}
Multiplying \eqref{C-inequality} by $\lambda>0$, and letting $\lambda\to 0^+$, on account of relations \eqref{first-estimate-0} and \eqref{second-estimate} we obtain that  
$0\leq \frac{N}{4}\left(\frac{C}{2}-1\right),$
i.e., $C\geq 2$, which contradicts our initial assumption $C\in (0,2)$. \hfill $\square$\\

Inspired by Bobkov,  Gentil and Ledoux \cite[Remark 2.2]{BobkovGL},  we establish a hypercontractivity  bound for the ${\sf m}$-{Gaussian probability measure} ${\sf m}_{G,x_0}$, see \eqref{mGx0-measure}. In the sequel, the previous notions are considered for $p=2$. 

	\begin{corollary}\label{Gaussian-theorem-rcd}
	Let $(X,d, {\sf m})$ be an ${\sf RCD}(0,N)$ space with $N>1$ and assume that ${\sf AVR}_ {\sf m}>0$. Given $\alpha>0$, $x_0\in X$,  for any function $u:X\to \mathbb R$ with $e^\frac{u}{2}\in W^{1,2}(X,d,{\sf m}_{G,x_0})$ and $t\geq 0$ one has  that
	\begin{equation}\label{Gaussian-hyper-rcd}
			\|e^{{\bf Q}_{t}u}\|_{L^{\alpha+t}(X,{\sf m}_{G,x_0})}\leq e^{H_{\alpha,t,x_0}} \|e^{u}\|_{L^{\alpha}(X,{\sf m}_{G,x_0})},
	\end{equation}
where $H_{\alpha,t,x_0}=\frac{t}{\alpha(\alpha+t)}\log\frac{\theta_{\sf m}(x_0)}{{\sf AVR}_ {\sf m}}.$
\end{corollary}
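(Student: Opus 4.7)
The plan is to adapt the Gross-type semigroup differentiation argument of Section \ref{subsection-hypercontractivity} to the Gaussian-weighted setting, using the defective Gaussian log-Sobolev inequality \eqref{Gaussian-log-Sobolev} in place of \eqref{LSI}, and with $p=p'=2$. Fix an admissible $u$, set $q(t):=\alpha+t$, and introduce
\[
F(t):=\|e^{{\bf Q}_t u}\|_{L^{q(t)}(X,{\sf m}_{G,x_0})}=\left(\int_X e^{q(t){\bf Q}_t u}\,d{\sf m}_{G,x_0}\right)^{1/q(t)}.
\]
Since $q(0)=\alpha$ and $q(t)=\alpha+t$, it is enough to establish the differential inequality
\[
\frac{d^+}{dt}\log F(t)\leq \frac{1}{(\alpha+t)^2}\log\frac{\theta_{\sf m}(x_0)}{{\sf AVR}_{\sf m}},
\]
because integrating on $[0,t]$ and using $\int_0^t(\alpha+s)^{-2}\,ds=\frac{t}{\alpha(\alpha+t)}$ yields exactly $\log F(t)-\log F(0)\leq H_{\alpha,t,x_0}$, which is \eqref{Gaussian-hyper-rcd}. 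Observe also that the case $\theta_{\sf m}(x_0)=+\infty$ is trivial, so we may assume $\theta_{\sf m}(x_0)<+\infty$ as in the proof of Theorem \ref{Gaussian-theorem}.

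First I would establish the requisite regularity, mirroring Propositions \ref{proposition-hyper} and \ref{proposition-hyper-2}: that $t\mapsto{\bf Q}_tu(x)$ is locally Lipschitz, that $F$ is locally Lipschitz on $(0,\infty)$, and that the normalized function $w_t:=e^{q(t){\bf Q}_t u/2}/F(t)^{q(t)/2}$ belongs to $W^{1,2}(X,d,{\sf m}_{G,x_0})$ with $\int_X w_t^2\,d{\sf m}_{G,x_0}=1$. The Gaussian decay of ${\sf m}_{G,x_0}$ should make these integrability checks softer than those in Section \ref{section4}: the hypothesis $e^{u/2}\in W^{1,2}(X,d,{\sf m}_{G,x_0})$ propagates forward in $t$ through the monotonicity ${\bf Q}_t u\leq u$ and the Hamilton--Jacobi inequality \eqref{HJ-inequality} with $p=2$, while the super-exponential weight $e^{-d_{x_0}^2/2}$ absorbs the polynomial-in-$d_{x_0}$ growth of $|{\bf Q}_t u|$ coming from assumption \textbf{(A3)}.

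Next I would differentiate $\log F(t)$ exactly as in \eqref{right-derivative}, obtaining three contributions with $q'(t)=1$. Replacing $\frac{d^+}{dt}{\bf Q}_t u$ by its upper bound $-|D{\bf Q}_t u|^2/2$ from \eqref{HJ-inequality}, and applying the Gaussian log-Sobolev inequality \eqref{Gaussian-log-Sobolev} to the unit-norm function $w_t$ together with the chain-rule identity
\[
|\nabla w_t|^2=\frac{q(t)^2}{4}\cdot\frac{e^{q(t){\bf Q}_t u}}{F(t)^{q(t)}}|\nabla{\bf Q}_t u|^2
\]
and the Cheeger identity \eqref{Cheeger-equality}, the gradient term on the right side of log-Sobolev carries the precise coefficient $2\cdot\tfrac{1}{q(t)^2}\cdot\tfrac{q(t)^2}{4}=\tfrac{1}{2}$, which matches the Hamilton--Jacobi contribution and cancels against it. What survives is the additive defect $\tfrac{q'(t)}{q(t)^2}\log\tfrac{\theta_{\sf m}(x_0)}{{\sf AVR}_{\sf m}}$, yielding the desired differential inequality.

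The main obstacle I foresee is not the algebra, which is a scalar-exponent specialization of the computation in Section \ref{subsection-hypercontractivity}, but the regularity bookkeeping in the weighted Gaussian framework: the growth and integrability conditions \textbf{(A1)}--\textbf{(A3)} defining $\mathcal F_{t_0,x_0}(X)$ were tailored to the unweighted measure ${\sf m}$, and one has to identify the analogous minimal hypotheses ensuring that $F$ and the relevant expressions involving $\frac{d^+}{dt}{\bf Q}_t u$ can be differentiated under the integral sign against $d{\sf m}_{G,x_0}$. Once this technical skeleton is in place, the one-line integration identifying $\int_0^t(\alpha+s)^{-2}\,ds\cdot\log\tfrac{\theta_{\sf m}(x_0)}{{\sf AVR}_{\sf m}}$ with $H_{\alpha,t,x_0}$ completes the proof.
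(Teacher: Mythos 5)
Your proposal is correct and follows essentially the same route as the paper, which simply rewrites \eqref{Gaussian-log-Sobolev} in the non-normalized form \eqref{extended-entropy} and reruns the semigroup differentiation of \S\ref{subsection-hypercontractivity} with $p=2$ and $q(t)=\alpha+t$; your observation that the defective (additive) form makes the gradient terms cancel exactly, so the $\log(ey)\le y$ step of \S\ref{subsection-hypercontractivity} is not needed, and the integration $\int_0^t(\alpha+s)^{-2}\,ds=\frac{t}{\alpha(\alpha+t)}$ producing $H_{\alpha,t,x_0}$ are both exactly right. The regularity bookkeeping you flag is left equally implicit in the paper's own proof, so this is not a gap relative to the source.
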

\noindent {\it Proof.} 
For any $u\in W^{1,2}(X,d,{\sf m}_{G,x_0})$ inequality \eqref{Gaussian-log-Sobolev} can be written into the equivalent form 
\begin{equation}\label{extended-entropy}
	\int_Xu^2\log u^2 d {\sf m}_{G,x_0}-\displaystyle \int_Xu^2 d {\sf m}_{G,x_0}\log\displaystyle \int_Xu^2 d {\sf m}_{G,x_0}\le 2\int_X\vert\nabla u\vert^2 d {\sf m}_{G,x_0}+\log\frac{\theta_{\sf m}(x_0)}{{\sf AVR}_ {\sf m}}\displaystyle \int_Xu^2 d {\sf m}_{G,x_0}.
\end{equation}
If we fix $t\geq 0$  and any function $u:X\to \mathbb R$ with $e^\frac{u}{2}\in W^{1,2}(X,d,{\sf m}_{G,x_0})$,   a similar argument as in subsection \ref{subsection-hypercontractivity} combined with \eqref{extended-entropy} yield the required estimate  \eqref{Gaussian-hyper-rcd}. 
\hfill $\square$
 
 \begin{remark}\rm  As expected, \eqref{Gaussian-hyper-rcd} implies  (as $t\to 0$)   the validity of the  Gaussian log-Sobolev inequality \eqref{extended-entropy}. 
 \end{remark}

\section{Final remarks}\label{section-final}

At the end, we indicate further perspectives related to our results, which can be considered as starting points for forthcoming investigations.

\begin{itemize}
	\item[I.] \textit{Rigidities.} Since the log-Sobolev inequality in Theorem \ref{log-Sobolev-main} is sharp, a natural question arises: can one characterize the \textit{equality} case in \eqref{LSI}? The proof of Theorem \ref{log-Sobolev-main} shows that if equality holds in \eqref{LSI}, we necessarily have equality in the P\'olya--Szeg\H o inequality \eqref{Polya-Szego}, therefore the rigidity of the sharp isoperimetric inequality \eqref{eqn-isoperimetric}
	 is \textit{expected} to hold giving in turn that $X$ is an Euclidean metric measure cone. However, according to Nobili and Violo \cite{NV}, some regularity of the extremal function is a priori needed in order to
	 characterize the equality in the P\'olya--Szeg\H o inequality \eqref{Polya-Szego}. Therefore, although this approach seems to be promising, technical difficulties prevent to
	 study the rigidity scenario in the sharp log-Sobolev inequality \eqref{LSI}.\ We notice that several rigidity results are available in the literature concerning the equality in the sharp isoperimetric inequality \eqref{eqn-isoperimetric}; beside the smooth setting, see Agostiniani,  Fogagnolo and  Mazzieri \cite{Agostiniani-etal, Fogagnolo-Maz-JFA}, Balogh and Krist\'aly \cite{BK}, Brendle \cite{Brendle} and  Johne \cite{Johne}, there are recent achievements also in the nonsmooth setting by 
	Antonelli,  Pasqualetto, Pozzetta and Semola \cite{Antonellietal} for  non-collapsed {\sf RCD} spaces, as well as Cavalletti and Manini \cite{CM-2} for possibly collapsing spaces. 
	
	\item[II.] ${\sf MCP}(0,N)$ \textit{spaces}. In a recent paper, Cavalletti and Manini \cite{CM} proved a sharp isoperimetric inequality on  ${\sf MCP}(0,N)$ spaces; namely, if  $(X,d, {\sf m})$ is a essentially non-branching metric measure space satisfying the ${\sf MCP}(0,N)$ condition for some $N>1$ (see Ohta \cite{Ohta} or Sturm \cite{Sturm-2} for the definition) and  ${\sf AVR}_ {\sf m}>0 $, one has that, for every bounded Borel subset $\Omega\subset X$,  
	\begin{equation}\label{eqn-isoperimetric-MCP}
		{\sf m}^+(\Omega)\geq \left(N\sigma_N{\sf AVR}_ {\sf m}\right)^\frac{1}{N} {\sf m}(\Omega)^\frac{N-1}{N},
	\end{equation}
	and the constant $\left(N\sigma_N{\sf AVR}_ {\sf m}\right)^\frac{1}{N}$ in  \eqref{eqn-isoperimetric-MCP} is sharp. It is worth noticing that this constant in \eqref{eqn-isoperimetric-MCP} is  slightly worse than the one from  \eqref{eqn-isoperimetric}. The isoperimetric inequality \eqref{eqn-isoperimetric-MCP} together with a suitable co-area formula on ${\sf MCP}(0,N)$ should yield a P\'olya--Szeg\H o inequality. Furthermore, a sharp  log-Sobolev inequality on the 1-dimensional ${\sf MCP}$ model case is needed (see \cite[\S 3.2]{CM}), similar to \eqref{log-Sob-1D} as in the ${\sf CD}$ setting. 
	
	\item[III.] \textit{Gaussian log-Sobolev inequality on ${\sf CD}(0,N)$ spaces.} One of the crucial steps in the proof of Theorem \ref{Gaussian-theorem} is the estimate \eqref{estimate-gauss-1}, which deeply explores the  infinitesimal Hilbertianity of the  ${\sf RCD}(0,N)$ spaces. It is not clear at this moment if the proof can be extended to ${\sf CD}(0,N)$ (for instance, to non-Riemannian Finsler structures), in spite of the fact that further indispensable ingredients are already know to be valid on such structures, as versions of the Laplace comparison of the distance functions similar to \eqref{Lap-comparison}, see Cavalletti and Mondino \cite{CMondino} and Gigli \cite{Gigli1}.
	
\end{itemize}

\vspace{0.2cm}

\noindent {\bf Acknowledgment.} A. Krist\'aly thanks T. Ill\'es, M. Pint\'er, M. E.-Nagy, P. Rig\'o and Z. Sz\'ant\'o, for  their kind invitation and  hospitality during his visit as a senior research fellow at the Corvinus Centre for Operations Research, Corvinus Institute for Advanced Studies, Corvinus University of Budapest, Budapest, Hungary.  The authors thank the reviewers for their careful reading and helpful comments.

%


\begin{thebibliography}{99}
  	
  \bibitem{Agostiniani-etal} V.	Agostiniani, M.  Fogagnolo, L.  Mazzieri, Sharp geometric inequalities for closed hypersurfaces in
  	manifolds with nonnegative Ricci curvature. \textit{Invent. Math.} 222 (2020), no. 3, 1033--1101. 
  
  \bibitem{AGK} M. Agueh, N. Ghoussoub, X. Kang,
Geometric inequalities via a general comparison principle for interacting gases. 
\textit{Geom. Funct. Anal.} 14 (2004), no. 1, 215--244.


 \bibitem{Ambrosio-Colombo} L. Ambrosio, M. Colombo, S. Di Marino, Sobolev spaces in metric measure spaces: reflexivity and lower semicontinuity of slope, in Variational methods for evolving objects, vol. 67 of Adv. Stud.
Pure Math.,Math. Soc. Japan, [Tokyo], (2015). 

 \bibitem{Ambrosio} L. Ambrosio, Calculus, heat flow and curvature-dimension bounds in metric measure
 spaces, in Proceedings of the International Congress of Mathematicians-Rio de Janeiro
 2018. Vol. I. Plenary lectures, World Sci. Publ., Hackensack, NJ, (2018).
 
 \bibitem{ADM-JFA} L. Ambrosio, S. Di Marino, Equivalent definitions of BV space and of total variation on metric measure spaces. \textit{J. Funct. Anal.} 266 (2014), no. 7, 4150--4188.
 

  	\bibitem{ADMG} L. Ambrosio, S. Di Marino, N. Gigli, Perimeter as relaxed Minkowski content in metric measure
  	spaces. \textit{Nonlinear Anal.} 153 (2017), pp. 78--88. 
  	
  	\bibitem{AGS-Duke} L. Ambrosio, N. Gigli, G. Savar\'e,  Metric measure spaces with Riemannian Ricci curvature bounded from below. \textit{Duke Math. J.} 163 (2014), no. 7, 1405--1490.
  	
  	\bibitem{AGS} L. Ambrosio, N. Gigli, G. Savar\'e,  Calculus and heat flow in metric measure spaces and applications to spaces with Ricci bounds from below. \textit{Invent. Math.} 195 (2014), no. 2, 289--391. 
  	
  		\bibitem{AGS-2} L. Ambrosio, N. Gigli, G. Savar\'e, Density of Lipschitz functions and equivalence of weak gradients in metric measure spaces. \textit{Rev. Mat. Iberoam.} 29 (2013), no. 3, 969--996. 
  		
  		\bibitem{AMondinoSavare} L. Ambrosio, A. Mondino, G. Savar\'e, 
  		Nonlinear diffusion equations and curvature conditions in metric measure spaces. 
  	\textit{Mem. Amer. Math. Soc.} 262 (2019), no. 1270, v+121 pp.
  		
  		
  		\bibitem{Antonellietal} G. Antonelli, E. Pasqualetto, M. Pozzetta,  D. Semola,
  		Asymptotic isoperimetry on non-collapsed spaces with lower Ricci bounds,  \textit{Math. Ann.} (2023). https://doi.org/10.1007/s00208-023-02674-y 
  
 \bibitem{BS} K. Bacher, K.-T. Sturm, Ricci Bounds for Euclidean and Spherical Cones,  \textit{Singular Phenomena and
Scaling in Mathematical Models}, pp. 3--23. Springer, Cham (2014).

	\bibitem{BE} D. Bakry, M. \'Emery, Diffusions hypercontractives. \textit{S\'emin. de probabilit\'es XIX, Univ. Strasbourg
	1983/84, Proc., Lect. Notes Math.} 19(1985), 177--206.
	
  	\bibitem{BakryGL} D. Bakry, I. Gentil, M. Ledoux, Analysis and geometry of Markov diffusion operators. Fundamental Principles of Mathematical Sciences, 348. Springer, Cham, 2014.
  	
  	\bibitem{B-L} D. Bakry,  M. Ledoux, A logarithmic Sobolev form of the Li-Yau parabolic inequality. \textit{Rev. Mat. Iberoamericana} 22(2) (2006),  683--702. 
  	
  		\bibitem{BDK} Z.\ Balogh, S.\ Don,  A.\ Krist\'{a}ly, Sharp weighted log-Sobolev inequalities: characterization of equality cases and applications, preprint, March 2022. Link:  https://arxiv.org/abs/2202.05578
  		
  		\bibitem{BK} Z.\ Balogh,   A.\ Krist\'{a}ly,  Sharp isoperimetric and Sobolev inequalities in spaces with nonnegative Ricci curvature. \textit{Math. Ann.}  385 (2023), no. 3-4, 1747--1773. 
  	
  	\bibitem{BartheK}  F. Barthe, A.V. Kolesnikov,  Mass transport and variants of the logarithmic Sobolev inequality. \textit{J. Geom. Anal.} 18 (2008), no. 4, 921--979.	
  	
  \bibitem{BobkovGL}  S.G. Bobkov, I. Gentil, M. Ledoux,  Hypercontractivity of Hamilton--Jacobi equations. \textit{J. Math. Pures Appl.} (9) 80 (2001), no. 7, 669--696.
  		
  	\bibitem{Brendle}	 S. Brendle,  Sobolev inequalities in manifolds with nonnegative curvature. \textit{Comm. Pure. Appl. Math.}
  	 76 (2023), no. 9, 2192--2218.
  		
  		
  		\bibitem{Carlen} E.A. Carlen, Superadditivity of Fisher's information and logarithmic Sobolev inequalities. \textit{J. Funct. Anal.} 101 (1991), no. 1, 194--211.
  		
  		\bibitem{CM} F. Cavalletti, D. Manini, 
  		Isoperimetric inequality in noncompact MCP spaces. 
  	\textit{	Proc. Amer. Math. Soc.} 150 (2022), no. 8, 3537--3548.
  	
  	\bibitem{CM-2} F. Cavalletti, D. Manini,   Rigidities of isoperimetric inequality under nonnegative Ricci curvature, preprint, 2022. Link: 
  	arXiv:2207.03423.
  	
  	\bibitem{Cavalletti-MIlman} F. Cavalletti, E. Milman, 
  	The globalization theorem for the curvature-dimension condition. 
  	\textit{Invent. Math.} 226 (2021), no. 1, 1--137. 
  	
  	\bibitem{CMondino} F. Cavalletti, A. Mondino, New formulas for the Laplacian of distance functions
  	and applications.\textit{ Anal. PDE}, 13 (2020), no. 7, 2091--2147.
  		
  		\bibitem{Cheeger} J. Cheeger,  Differentiability of Lipschitz functions on metric measure spaces.
  		\textit{Geom. Funct. Anal.} 9 (1999), 428--517.
  		
  		\bibitem{Clarke} F.H. Clarke, Optimization and Non-smooth Analysis, John Wiley \& Sons, New York, 1983. 
  		
  		\bibitem{Cordero} D. \ Cordero-Erausquin, Some applications of mass transport to Gaussian-type inequalities. \textit{Arch. Ration. Mech. Anal.} 161 (2002), no. 3, 257--269.
  		
  		\bibitem{CEMS2} D. \ Cordero-Erausquin, R.J. \ McCann, M. Schmuckenschl\"{a}ger, Pr\'{e}kopa-Leindler type inequalities on Riemannian manifolds, Jacobi fields and optimal transport. \textit{Ann. Fac. Sci. Toulouse} XV,   (2006), no. 4, 613-635.
  		
  		\bibitem{C-SC} T. Coulhon, L. Saloff-Coste,  Isop\'erim\'etrie pour les groupes et les vari\'et\'es. \textit{Rev. Mat. Iberoamericana}
  		9(2) (1993), 293--314. 
  		
  		\bibitem{DP-G} G. De Philippis, N.  Gigli, 
  		From volume cone to metric cone in the nonsmooth setting. 
  		\textit{Geom. Funct. Anal.} 26 (2016), no. 6, 1526--1587.
  		
  			\bibitem{delPinoDolbeault-2} M. Del Pino, J.\ Dolbeault, The optimal Euclidean $L^p$-Sobolev logarithmic inequality. \textit{J. Funct. Anal.} 197 (2003), no. 1, 151--161.
  			
  		\bibitem{doCarmo-Xia} M.P. 	do Carmo, C. Xia,  Complete manifolds with non-negative Ricci curvature and the Caffarelli-Kohn-Nirenberg inequalities. \textit{Compos. Math.} 140 (2004), 818--826.
  		
  		\bibitem{EKS} M. Erbar, K. Kuwada, K.-T. Sturm, 
  		On the equivalence of the entropic curvature-dimension condition and Bochner's inequality on metric measure spaces. 
  		\textit{Invent. Math.} 201 (2015), no. 3, 993--1071.
  			
  		\bibitem{Fogagnolo-Maz-JFA}	M. Fogagnolo, L. Mazzieri, Minimising hulls, p-capacity and isoperimetric inequality on complete Riemannian manifolds. \textit{J. Funct. Anal.} 283 (2022), no. 9, Paper No. 109638, 49 pp.
  		
  		\bibitem{Fujita} Y. Fujita, An optimal logarithmic Sobolev inequality with Lipschitz constants. \textit{J. Funct. Anal.}
  		261 (2011), no. 5, 1133--1144.
  			
  			
  				\bibitem{Gentil} I.\ Gentil,  The general optimal $L^p$-Euclidean logarithmic Sobolev inequality by Hamilton--Jacobi equations. \textit{J. Funct. Anal.} 202(2) (2003), 591--599.  
  				
  				\bibitem{Gigli1} N. Gigli, On the differential structure of metric measure spaces and applications. \textit{Mem. Amer. Math.} Soc. 236 (2015), no. 1113, vi+91 pp.
  				
  			
  				\bibitem{Gigli2} N. Gigli, Lecture notes on differential calculus on RCD spaces. \textit{Publ. Res. Inst. Math. Sci.} 54 (2018), no. 4, 855--918.
  				
  				\bibitem{GJ} J.	Glimm, A. Jaffe,  Quantum physics. A functional integral point of view. Second edition. Springer-Verlag, New York, 1987.
  				
  				\bibitem{Gross}  L. Gross,  Logarithmic Sobolev inequalities. \textit{Amer. J. Math.} 97 (1975), no. 4, 1061--1083.
  			
  			\bibitem{GRS} N. Gozlan, C.\ Roberto, P.-M. Samson,  Hamilton Jacobi equations on metric spaces and transport entropy inequalities. \textit{Rev. Mat. Iberoam.} 30 (2014), no. 1, 133--163.
  			
  			\bibitem{GZ} A. Guionnet, B. Zegarlinski,  Lectures on logarithmic Sobolev inequalities. S\'eminaire de Probabilit\'es, XXXVI, 1--134, Lecture Notes in Math., 1801, Springer, Berlin, 2003.
  			
  			\bibitem{Hebey} E. Hebey, Nonlinear analysis on manifolds: Sobolev spaces and inequalities. Courant Lecture Notes in Mathematics, 5.
  			New York University, Courant Institute of Mathematical Sciences, New
  			York; American Mathematical Society, Providence, RI, 1999.
  			
  			\bibitem{Hajlasz} P. Hajlasz, Sobolev spaces on an arbitrary metric space, \textit{Potential
  			Anal.}, 5 (1996), no. 4, 403--415.
  		
  		\bibitem{HK} J. Heinonen, P. Koskela, Quasiconformal maps in metric spaces with controlled geometry. \textit{Acta Math.} 181, (1998), 1-61.
  		
  	\bibitem{Johne} F.	Johne,   Sobolev inequalities on manifolds with nonnegative Bakry-\'Emery Ricci curvature, preprint, 2021. ArXiv: 2103.08496.
          
           \bibitem{Ketterer} C.  Ketterer, Cones over metric measure spaces and the maximal diameter theorem. \textit{J. Math. Pures Appl.}  (9) 103(5), 1228--1275 (2015)	
 		
  \bibitem{Kristaly-Calculus} A. Krist\'aly,  Metric measure spaces supporting Gagliardo-Nirenberg inequalities: volume non-collapsing and rigidities. \textit{Calc. Var. Partial Differential Equations} 55 (2016), no. 5, Art. 112, 27 pp.
  
  \bibitem{LL}  E. H. Lieb, M. Loss,  Analysis. Second edition. Graduate Studies in Mathematics, 14. American Mathematical Society, Providence, RI, 2001. 
  
  			\bibitem{LV}  J. Lott, C. Villani, Ricci curvature for metric measure spaces via optimal transport. \textit{Ann. of Math. (2)}  169 (3) (2009), 903--991.
  			
  		
  		\bibitem{LV-JMPA}  J. Lott, C. Villani, Hamilton--Jacobi semigroup on length spaces and applications. \textit{J. Math. Pures Appl.} (9) 88 (2007), no. 3, 219--229. 
 
\bibitem{Miranda} M. Miranda, Jr., Functions of bounded variation on ``good'' metric spaces. \textit{J. Math. Pures Appl.} (9), 82
 (2003), pp. 975--1004.
 
 \bibitem{MSemola} A. Mondino, D. Semola, 
 Polya-Szego inequality and Dirichlet $p$-spectral gap for non-smooth spaces with Ricci curvature bounded below. 
 \textit{J. Math. Pures Appl.} (9) 137 (2020), 238--274. 
 
 \bibitem{Ni}  L. Ni, The entropy formula for linear heat equation. \textit{J. Geom. Anal.} 14 (2004), no. 1, 87--100.
 
\bibitem{NV} F. Nobili, I.Y. Violo,  Rigidity and almost rigidity of Sobolev inequalities on compact spaces with lower Ricci curvature bounds,  \textit{Calc. Var. Partial Differential Equations},  61 (2022), no. 5, Paper No. 180, 65 pp.

\bibitem{NV2} F. Nobili, I.Y. Violo, Stability of Sobolev inequalities on Riemannian manifolds with Ricci curvature lower bounds, preprint, October 2022. Link: arxiv.org/abs/2210.00636

\bibitem{Ohta} S.I. Ohta, On the measure contraction property of metric measure spaces. 
\textit{Comment. Math. Helv.} 82 (2007), no. 4, 805--828. 

\bibitem{OV1} F.  Otto, C. Villani,  Comment on: "Hypercontractivity of Hamilton--Jacobi equations" [J. Math. Pures Appl. (9) 80 (2001), no. 7, 669--696; MR1846020] by S. G. Bobkov, I. Gentil and M. Ledoux. \textit{J. Math. Pures Appl.} (9) 80 (2001), no. 7, 697--700.

\bibitem{OV2} F.  Otto, C. Villani, Generalization of an inequality by Talagrand and links with the logarithmic Sobolev inequality. \textit{J. Funct. Anal.} 173 (2000), no. 2, 361--400.

\bibitem{Perelman} G. Perelman,   The entropy formula for the Ricci flow and its geometric applications, arXiv: math.DG/0211159.

\bibitem{Petersen} P. Petersen,  Riemannian Geometry. Graduate Texts in Mathematics, 3rd edn. Springer, Berlin (2016). 

\bibitem{Rajala} T. Rajala, Interpolated measures with bounded density in metric spaces satisfying the curvature-dimension conditions of Sturm. \textit{J. Funct. Anal.} 263 (2012), no. 4, 896--924.


\bibitem{Shanmugalingam} N. Shanmugalingam, Newtonian spaces: an
extension of Sobolev spaces to metric measure spaces, \textit{Rev. Mat. Iberoamericana},  16 (2000), no. 2, 243--79.

 \bibitem{Sturm-1} K.-T. Sturm, On the geometry of metric measure spaces. I. \textit{Acta
	Math.} 196 (1) (2006), 65--131.

 
 \bibitem{Sturm-2} K.-T. Sturm, On the geometry of metric measure spaces. II. \textit{Acta
 	Math.} 196 (1) (2006), 133--177.
 
 \bibitem{Yau}  H.-T. Yau,  Logarithmic Sobolev inequality for lattice gases with mixing conditions. \textit{Comm. Math. Phys.} 181 (1996), no. 2, 367--408. 
 
	\bibitem{Weissler}  F.B. Weissler,  Logarithmic Sobolev inequalities for the heat-diffusion semigroup. \textit{Trans. Amer. Math. Soc.} 237 (1978), 255--269.
	 \end{thebibliography}
\end{document}